\newcommand{\scal}[2]{\langle{{#1},{#2}}\rangle}
\newcommand{\HH}{\ensuremath{\mathcal H}}
\newcommand{\RR}{\ensuremath{\mathbb R}}
\newcommand{\SSS}{\ensuremath{\mathbb S}}
\newcommand{\NN}{\ensuremath{\mathbb N}}
\newcommand{\menge}[2]{\big\{{#1} \mid {#2}\big\}}
\newcommand{\dom}{\ensuremath{\operatorname{dom}}}
\newcommand{\gra}{\ensuremath{\operatorname{gra}}}
\newcommand{\inte}{\ensuremath{\operatorname{int}}}
\newcommand{\ran}{\ensuremath{\operatorname{ran}}}
\newcommand{\Fix}{\ensuremath{\operatorname{Fix}}}
\newcommand{\Id}{\ensuremath{\operatorname{Id}}}
\renewcommand{\phi}{\ensuremath{\varphi}}
\newtheorem{theorem}{Theorem}[section]
\newtheorem{lemma}[theorem]{Lemma}
\newtheorem{fact}[theorem]{Fact}
\newtheorem{corollary}[theorem]{Corollary}
\newtheorem{proposition}[theorem]{Proposition}
\newtheorem{definition}[theorem]{Definition}
\theoremstyle{plain}{\theorembodyfont{\rmfamily}
}
\theoremstyle{plain}{\theorembodyfont{\rmfamily}
}
\theoremstyle{plain}{\theorembodyfont{\rmfamily}
}
\theoremstyle{plain}{\theorembodyfont{\rmfamily}
\newtheorem{example}[theorem]{Example}}
\theoremstyle{plain}{\theorembodyfont{\rmfamily}
\newtheorem{remark}[theorem]{Remark}}
\theoremstyle{plain}{\theorembodyfont{\rmfamily}
}
\newcommand{\pluss}{{\hskip1pt \raise1pt\vbox{\hrule width6pt \vskip1pt
\hrule width6pt}\kern-4pt{\lower1pt\hbox{\vrule height6pt \kern1pt\vrule
height6pt}}\hskip5pt}}
\begin{document}

\title{{\fontfamily{ptm}\selectfont On Bauschke-Bendit-Moursi modulus of averagedness and
 classifications of averaged nonexpansive operators}}

\author{
Shuang\ Song\thanks{Department of Mathematics, I.K. Barber Faculty of Science,
The University of British Columbia, Kelowna, BC Canada V1V 1V7. E-mail:  \texttt{cat688@student.ubc.ca}.}~ and ~ Xianfu Wang\thanks{Department of Mathematics, I.K. Barber Faculty of Science,
The University of British Columbia, Kelowna, BC Canada V1V 1V7. E-mail:  \texttt{shawn.wang@ubc.ca}.} }

\date{June 24, 2025}

\maketitle

\vskip 8mm

\begin{abstract} \noindent Averaged operators are important in Convex Analysis and Optimization Algorithms.
In this paper, we propose classifications of averaged operators, firmly nonexpansive operators, and proximal operators using the Bauschke-Bendit-Moursi modulus of averagedness. We show that if an operator is averaged with
a constant less than $1/2$, then it is a bi-Lipschitz
homeomorphism.
Amazingly the proximal operator of a convex function has its modulus of averagedness less than $1/2$ if and only if the function is Lipschitz smooth.
Some results on the averagedness of operator compositions are obtained.
Explicit formulae for calculating the modulus of averagedness of resolvents and proximal operators
in terms of various values associated with the maximally monotone operator or subdifferential are also
given. Examples are provided to illustrate our results.
\end{abstract}

{\small
\noindent {\bfseries 2020 Mathematics Subject Classification:}
Primary  47H09, 47H05; Secondary 47A05, 52A41, 49J53.

\noindent {\bfseries Keywords:} Averaged operator, Bauschke-Bendit-Moursi modulus of averagedness,
cocoercive value,
firmly nonexpansive operator, limiting operator, Lipschitz value,
maximally monotone operator, monotone value, Mordukhovich limiting subdifferential, nonexpansive operator, proximal operator, resolvent, Yosida regularization.


\section{Introduction}
Throughout we assume that
$$
X \text { is a real Hilbert space with inner product }\langle\cdot, \cdot\rangle: X \times X \rightarrow \mathbb{R} \text {, }
$$ and induced norm $\|\cdot\|$. Let $\Id$ denote the identity operator on $X$.
Recall the following well-known definitions \cite{BC, cegielski}.
\begin{definition}
Let $T: X \rightarrow X$ and $\mu>0$. Then $T$ is
\begin{enumerate}
\item nonexpansive\footnote{For convenience, we shall assume that
$T$ has a full domain throughout the paper while
one can generalize it to be on a proper subset of $X$.} if $$
(\forall x \in X)(\forall y \in X) \quad\|T x-T y\| \leqslant\|x-y\|;
$$
\item firmly nonexpansive if
$$
(\forall x \in X)(\forall y \in X) \quad
\|T x-T y\|^2+\|(\Id-T)x-(\Id-T)y\|^2\leqslant \|x-y\|^2;
$$
\item $\mu$-cocoercive if $\mu T$ is firmly nonexpansive.
\end{enumerate}
\end{definition}

\begin{definition}\label{d:averaged:his}
Let $T: X \rightarrow X$ be nonexpansive. $T$ is $k$-averaged\footnote{Usually,
one excludes the cases $k=0$ and $k=1$ in the study of averaged operators, but it is very convenient in this
paper to allow these cases.}
if $T$ can be represented as $$T=(1-k) \mathrm{Id}+k N,$$ where $N:X\rightarrow X$ is nonexpansive,
and $k \in[0,1]$.
\end{definition}

Averaged operators are important in optimization,
see, e.g., \cite{baillon, bartz, BBM,  BC, BM, bmwang, cegielski, comb04,  CY, OY,xu}.
Firmly nonexpansive operators, being $1/2$-averaged \cite[Proposition 4.4]{BC}, form a proper
subclass of the class of averaged operators.
From the definition we have $\Id$ is the only 0-averaged operator. When $k \in(0,1]$, various
characterizations of $k$-averagedness (see \cite[Proposition 2.2]{bartz}, \cite{BC, cegielski}) are available including
\begin{equation}\label{e:average1}
(\forall x \in X)(\forall y \in X) \quad\|T x-T y\|^2 \leqslant\|x-y\|^2-\frac{1-k}{k}\|(\operatorname{Id}-T) x-(\operatorname{Id}-T) y\|^2,
\end{equation}
and $(\forall x \in X)(\forall y \in X)$
\begin{equation}\label{e:average2}
\|T x-T y\|^2\leqslant \langle x-y, T x-T y\rangle +(1-2 k)(\scal{x-y}{Tx-Ty}-\|x-y\|^2 ).
\end{equation}

When $k=0$, while the historic Definition~\ref{d:averaged:his} gives
$T=\Id$ (linear), characterization~\eqref{e:average2} gives $T=\Id+v$ (affine) for some $v\in X$,
hence they are not equivalent in this case.
From \eqref{e:average1} or \eqref{e:average2} and the fact that $\mathrm{Id}$ is the only $0$-averaged operator, we can deduce that if an
operator is $k_0$-averaged, then it is $k$-averaged for every $k\geqslant k_0$.
This motivates the following definition, which was proposed by Bauschke, Bendit and Moursi \cite{BBM}.

\begin{definition}[Bauschke-Bendit-Moursi modulus of averagedness]\label{d:bbw:mod}
Let $T: X \rightarrow X$ be nonexpansive. The Bauschke-Bendit-Moursi modulus of averagedness of
$T$ is defined by
$$
k(T):=\inf \{k \in[0,1] \mid T \text { is } k \text {-averaged }\} .
$$
We call it the BBM modulus of averagedness.
\end{definition}

It is natural to ask: how does the modulus of averagedness impact classifications of averaged operators?
In view of Definition~\ref{d:bbw:mod},
if $T: X \rightarrow X$ is firmly nonexpansive then $k(T) \leqslant 1/2$.
Based on this, we define the following, which classifies the class of firmly nonexpansive operators
using the BBM modulus of averagedness.

\begin{definition}[normal and special nonexpansiveness]
Let $T: X \rightarrow X$. We say that $T$ is normally (firmly) nonexpansive if $k(T)<1/2$, and $T$ is specially (firmly) nonexpansive if $k(T)=1/2$.
\end{definition}

\begin{center}
\begin{tikzpicture}[x=0.75pt,y=0.75pt,yscale=-1,xscale=1]

\draw[black, line width=0.8pt]
(235.76,141.65) .. controls (235.76,88.38) and (278.94,45.2) .. (332.21,45.2)
.. controls (385.47,45.2) and (428.65,88.38) .. (428.65,141.65)
.. controls (428.65,194.91) and (385.47,238.09) .. (332.21,238.09)
.. controls (278.94,238.09) and (235.76,194.91) .. (235.76,141.65) -- cycle ;

\filldraw[fill=blue!8, draw=black, dashed, line width=0.8pt]
(284.39,141.65) .. controls (284.39,115.24) and (305.8,93.83) .. (332.21,93.83)
.. controls (358.62,93.83) and (380.03,115.24) .. (380.03,141.65)
.. controls (380.03,168.06) and (358.62,189.47) .. (332.21,189.47)
.. controls (305.8,189.47) and (284.39,168.06) .. (284.39,141.65) -- cycle ;

\draw (360,85.4) node [anchor=north west][inner sep=0.75pt, font=\footnotesize]  {$k=1/2$};
\draw (309.12,130.55) node [anchor=north west][inner sep=0.75pt, font=\footnotesize]  {$k< 1/2$};

\end{tikzpicture}
\end{center}

Let $\Gamma_0(X)$ denote
the set of all proper lower semicontinuous convex functions from $X$ to $(-\infty,+\infty]$.
Recall that for $f \in \Gamma_0(X)$ its proximal operator is defined by
$(\forall x\in X)\ \mathrm{P}_{f}(x):=\underset{u \in X}{\operatorname{argmin}}\left\{f(u)+\frac{1}{2}\|u-x\|^2\right\}$.
For a nonempty closed convex subset $C$ of $X$, its indicator function is defined by
$\iota_{C}(x):=0$ if $x\in C$, and $+\infty$ otherwise.  If $f=\iota_{C}$, we write
$\mathrm{P}_{f}=P_{C}$, the projection operator onto $C$.
It is well known that $\mathrm{P}_{f}$ is firmly nonexpansive \cite{BC}, which implies $k(\mathrm{P}_{f}) \leqslant 1/2$. Some natural questions arise: Given $f \in \Gamma_0(X)$, when is $\mathrm{P}_{f}$ normally (or specially) nonexpansive? how can we calculate
$k(\mathrm{P}_{f})$? In \cite{BBM}, these problems are essentially solved in
linear cases, or, in smooth case on the real line.

\emph{The goal of this paper is to classify averaged nonexpansive operators, including firmly nonexpansive operators, via the Bauschke-Bendit-Moursi
modulus of averagedness in a general Hilbert space. We provide some fundamental properties of modulus of averagedness
of averaged mappings, firmly nonexpansive
mappings and proximal mappings. We determine what properties normally (or specially) nonexpansive operators possess by using the monotone operator theory.
One amazing result is that a proximal mapping of a convex function has
its modulus of averagedness less than $1/2$ if and only if the function is Lipschitz smooth.
Many examples are provided to illustrate our results.
Bauschke-Bendit-Moursi
modulus of averagedness turns out to be an
extremely powerful tool in studying averaged operators and firmly nonexpansive operators!}

The rest of the paper is organized as follows. In Section~\ref{s:bilip} we explore some basic
properties of the modulus function and show that a normally nonexpansive operator is a bi-Lipschitz homeomorphism. In Section~\ref{s:app} averagedness of operator compositions and some asymptotic behaviors of averaged operators
are examined. In particular, the limiting operator of an averaged operator
is a projection if and only if its BBM modulus is $1/2$.
In Sections~\ref{s:resolvent} and \ref{s:proximal} we investigate both 
normal and special nonexpansiveness of resolvents and proximal operators. Our surprising results 
are Theorem~\ref{Charac1} and Theorem~\ref{t:lipsmooth}, characterizing normal and special resolvents and proximal operators. In Section~\ref{s:compute}
we establish formulae of modulus of averagedness of resolvents in terms of various values of maximally
monotone operators. Finally, in Section~\ref{s:twod} we extend a modulus of averagedness
formula on a composition of two projections by Bauschke, Bendit and Moursi in $\RR^2$ to general Hilbert spaces.

\section{Bijective theorem}\label{s:bilip}

\subsection{Auxiliary results}
This subsection collects preparatory results on modulus of averagedness used in later proofs.
For any operator $T: X \rightarrow X$ and any $v \in X$, the operator $T+v$ is defined by
$$
(\forall x \in X) \quad(T+v) x:=T x+v.
$$

\begin{proposition}\label{+v}
Let $T: X \rightarrow X$ be nonexpansive and $v \in X$. Then
\begin{enumerate}
\item\label{i:shift1}
$k(T+v)=k(T)$.
\item\label{i:shift2}
$k(T(\cdot+v))=k(T)$.
\end{enumerate}
\end{proposition}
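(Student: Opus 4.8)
The plan is to deduce both identities from a single structural observation. For $k\in\zo$, characterization~\eqref{e:average1} states that $T$ is $k$-averaged if and only if
\[
\|Tx-Ty\|^2\le\|x-y\|^2-\tfrac{1-k}{k}\,\|(x-y)-(Tx-Ty)\|^2\qquad(\forall\,x,y\in X),
\]
and this condition depends on $T$ only through the set of difference pairs $D(T):=\{(x-y,\,Tx-Ty)\mid (x,y)\in X\times X\}$. Hence it suffices to check that each of the two transformations preserves $D(T)$ and then to pass to the infimum over $k$. I would first record the harmless bookkeeping fact that, for any nonexpansive $T$, $k(T)=\inf\{k\in\zo\mid T\text{ is }k\text{-averaged}\}$: since $\Id$ is the only $0$-averaged operator and $\Id$ is $k$-averaged for every $k\in\zo$, including or excluding the value $k=0$ never changes this infimum.

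For part~\ref{i:shift1}, set $S:=T+v$. Then $Sx-Sy=Tx-Ty$ for all $x,y\in X$, so $S$ is nonexpansive and $D(S)=D(T)$. By the observation above, for every $k\in\zo$ the operator $S$ is $k$-averaged precisely when $T$ is, whence the two infima coincide and $k(T+v)=k(T)$.

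For part~\ref{i:shift2}, set $S:=T(\cdot+v)$, that is, $Sx=T(x+v)$. Then $Sx-Sy=T(x+v)-T(y+v)$ while $x-y=(x+v)-(y+v)$, and since $(x,y)\mapsto(x+v,y+v)$ is a bijection of $X\times X$ onto itself, we again obtain that $S$ is nonexpansive and $D(S)=D(T)$. The identical argument yields $k(T(\cdot+v))=k(T)$.

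There is essentially no hard step here: the content is merely the remark that \eqref{e:average1} (equivalently \eqref{e:average2}) is invariant under any relabelling of $X\times X$ that fixes every difference pair, and both an output shift and an input shift are of this form. The only point requiring a little care is the endpoint $k=0$, where Definition~\ref{d:averaged:his} and the characterizations~\eqref{e:average1}--\eqref{e:average2} disagree; the bookkeeping fact in the first paragraph handles it. Alternatively, one could run the whole argument through \eqref{e:average2}, which also behaves correctly at $k=0$ since affine maps are preserved by both shifts, but this refinement is not needed.
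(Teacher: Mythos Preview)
Your proof is correct and follows essentially the same approach as the paper: both arguments rest on the observation that the characterizations of $k$-averagedness depend on $T$ only through the differences $(x-y,\,Tx-Ty)$, and that both an output shift and an input shift preserve these. The paper cites characterization~\eqref{e:average2} while you cite~\eqref{e:average1} and add an explicit remark on the $k=0$ endpoint, but this is cosmetic.
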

\begin{proof}
\ref{i:shift1}: The result follows by combining $(T+v) x-(T+v) y=T x-T y$ with characterization (2).

\ref{i:shift2}: The result follows by combining $x-y=(x+v)-(y+v)$ with characterization (2).
\end{proof}

\begin{proposition}\label{attain}
Let $T: X \rightarrow X$ be nonexpansive. If $k(T)>0$, then $T$ is $k(T)$-averaged. Moreover,
$T$ is $\beta$-averaged for every $\beta\in [k(T),1]$.
\end{proposition}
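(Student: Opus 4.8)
The plan is to establish the first assertion — that the infimum defining $k(T)$ is attained as soon as $k(T)>0$ — by a limiting argument applied to characterization~\eqref{e:average1}, and then to read off the ``moreover'' clause from the monotonicity in $k$ of $k$-averagedness recorded in the discussion preceding Definition~\ref{d:bbw:mod} (namely, a $k_0$-averaged operator is $k$-averaged for every $k\geqslant k_0$).

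For the first assertion, I would start from the definition of the infimum and choose a sequence $(k_n)_\nnn$ in $[0,1]$ with $k_n\to k(T)$ such that $T$ is $k_n$-averaged for every $n$ (such a sequence exists because the set $\{k\in[0,1]\mid T\text{ is }k\text{-averaged}\}$ is nonempty, containing $1$). Since $k(T)>0$, after discarding finitely many terms we may assume $k_n\in\zo$ for all $n$. Applying~\eqref{e:average1} with $k=k_n$ yields, for all $x,y\in X$,
$$\|Tx-Ty\|^2\leqslant\|x-y\|^2-\frac{1-k_n}{k_n}\,\bigl\|(\Id-T)x-(\Id-T)y\bigr\|^2.$$
Fixing $x$ and $y$ and letting $n\to\infty$, I use that $t\mapsto(1-t)/t$ is continuous at $t=k(T)$ to pass to the limit $\frac{1-k_n}{k_n}\to\frac{1-k(T)}{k(T)}\in\RP$, obtaining
$$\|Tx-Ty\|^2\leqslant\|x-y\|^2-\frac{1-k(T)}{k(T)}\,\bigl\|(\Id-T)x-(\Id-T)y\bigr\|^2.$$
Since $x,y\in X$ are arbitrary and $k(T)\in\zo$, this is precisely~\eqref{e:average1} with $k=k(T)$, hence $T$ is $k(T)$-averaged.

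For the ``moreover'' part, fix $\beta\in[k(T),1]$. If $\beta=k(T)$, we are done by the above. If $\beta>k(T)$, then $T$ is $k(T)$-averaged with $k(T)\leqslant\beta$, so $T$ is $\beta$-averaged by the monotonicity fact above.

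I expect the limit passage to be the only delicate point, and there the hypothesis $k(T)>0$ is essential: it is exactly what keeps the coefficients $\frac{1-k_n}{k_n}$ bounded so that~\eqref{e:average1} survives the limit. If instead $k(T)=0$, the same computation would force $\frac{1-k_n}{k_n}\to+\infty$ and hence $(\Id-T)x=(\Id-T)y$ for all $x,y$, i.e.\ $T=\Id+v$ for some $v\in X$; such a $T$ fails to be $0$-averaged in the sense of Definition~\ref{d:averaged:his} unless $v=0$, which is consistent with — and explains — why the statement restricts to $k(T)>0$.
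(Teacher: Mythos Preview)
Your proof is correct and follows essentially the same approach as the paper's: both exploit that the right-hand side of characterization~\eqref{e:average1} is continuous in $k$ (so the infimum is attained) and increasing in $k$ (so the ``moreover'' clause holds). Your version simply spells out the limiting argument with an explicit sequence $(k_n)$, whereas the paper compresses this into a one-line appeal to continuity and monotonicity of the right-hand side.
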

\begin{proof}
Due to $k(T)>0$, we can use characterization either \eqref{e:average1} or \eqref{e:average2}.
The right hand side of \eqref{e:average1} or \eqref{e:average2} is a continuous
and increasing function in term of $k$, thus the result follows.
\end{proof}

Let $\Fix T:=\{x \in X \mid T x=x\}$ denote the set of fixed points of $T: X \rightarrow X$. Our following result characterizes $k(T)=0$.
\begin{proposition}\label{zero}
Let $T: X \rightarrow X$ be nonexpansive. Then
\begin{equation}\label{e:shiftonly}
k(T)=0 \Leftrightarrow \exists v \in X: T=\Id+v.
\end{equation}
If, in addition, $\Fix T\neq \varnothing$, then
\begin{equation}\label{e:identityonly}
k(T)=0 \Leftrightarrow T=\Id.
\end{equation}
\end{proposition}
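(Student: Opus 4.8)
The plan is to establish the equivalence \eqref{e:shiftonly} first and then read off \eqref{e:identityonly} from it. For ``$\Leftarrow$'' in \eqref{e:shiftonly}, the representation $\Id=(1-0)\Id+0\cdot\Id$ shows that $\Id$ is $0$-averaged, so $k(\Id)=0$; hence if $T=\Id+v$, then Proposition~\ref{+v}\ref{i:shift1} gives $k(T)=k(\Id+v)=k(\Id)=0$.

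For ``$\Rightarrow$'' in \eqref{e:shiftonly}, suppose $k(T)=0$. By the remark preceding Definition~\ref{d:bbw:mod}, $T$ being $k_0$-averaged implies $T$ is $k$-averaged for every $k\geqslant k_0$, so the set of admissible $k$ is an interval; since its infimum is $k(T)=0$, either $T$ is $0$-averaged, in which case Definition~\ref{d:averaged:his} forces $T=\Id$ (the case $v=0$), or $T$ is $k$-averaged for every $k\in\left]0,1\right]$. In the latter case, applying \eqref{e:average1} for each such $k$ and using $\|Tx-Ty\|^2\geqslant0$ on the left-hand side yields, for all $x,y\in X$ and all $k\in\left]0,1\right]$,
\[
\big\|(\Id-T)x-(\Id-T)y\big\|^2\leqslant\frac{k}{1-k}\,\|x-y\|^2 .
\]
Letting $k\downarrow0$ gives $(\Id-T)x=(\Id-T)y$ for all $x,y\in X$, so $\Id-T$ is constant, say equal to $-v$, and therefore $T=\Id+v$.

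Finally, \eqref{e:identityonly} follows quickly: ``$\Leftarrow$'' holds because $k(\Id)=0$, while for ``$\Rightarrow$'', given $k(T)=0$ and $x_0\in\Fix T$, the already-proved \eqref{e:shiftonly} yields $T=\Id+v$, so $x_0=Tx_0=x_0+v$, forcing $v=0$ and $T=\Id$. I expect this to be essentially routine; the only mildly delicate points are the case split on whether the infimum defining $k(T)$ is attained and the limit $k\downarrow0$, which works because $\tfrac{1-k}{k}\to+\infty$. One could just as well invoke \eqref{e:average2} with $k=0$, which, as noted in the Introduction, characterizes operators of the form $\Id+v$ directly.
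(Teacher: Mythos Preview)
Your proof is correct and follows essentially the same route as the paper: both invoke characterization~\eqref{e:average1} for arbitrarily small $k>0$ and pass to the limit $k\downarrow 0$ to force $\Id-T$ to be constant, then derive \eqref{e:identityonly} from \eqref{e:shiftonly}. The only cosmetic difference is that the paper argues by contradiction (fixing $x_0,y_0$ with $(\Id-T)x_0\neq(\Id-T)y_0$ and obtaining $0\leqslant-\infty$), whereas you argue directly via the bound $\|(\Id-T)x-(\Id-T)y\|^2\leqslant\frac{k}{1-k}\|x-y\|^2$.
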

\begin{proof}
Suppose $\exists v \in X: T=\mathrm{Id}+v$. Obviously $k(\Id)=0$. Thus by Proposition \ref{+v}, $k(T)=k(\mathrm{Id}+v)=0$.

Suppose $k(T)=0$. Assume that for any $v \in X$: $T \neq \Id+v$. Then there exist $x_0, y_0 \in X$
such that $(T-\mathrm{Id}) x_0 \neq(T-\mathrm{Id}) y_0$,
whence $\left\|(T-\mathrm{Id}) x_0-(T-\mathrm{Id}) y_0\right\|^2>0$.
Our assumption implies $T \neq \Id$, and $\Id$ is the only 0-averaged operator, thus there exists a sequence $(k_n)_{n \in \NN}$ in $(0,1]$ such that $T$ is $k_n$-averaged and $k_n \rightarrow 0$.
Now characterization \eqref{e:average1} implies that for any $n \in \NN$:
$$
\|T x_0-T y_0\|^2 \leqslant\|x_0-y_0\|^2-\frac{1-k_n}{k_n}\|(\operatorname{Id}-T) x_0-(\operatorname{Id}-T) y_0\|^2,
$$
i.e.,$$
0 \leqslant \left\|x_0-y_0\right\|^2-\left\|T x_0-T y_0\right\|^2 +(1-\frac{1}{k_n})\left\|(T-\Id) x_0-(T-\Id) y_0\right\|^2.
$$
Note that $\left\|(T-\mathrm{Id}) x_0-(T-\mathrm{Id}) y_0\right\|^2>0$. Now letting $n \rightarrow \infty$ yields $
0 \leqslant-\infty,
$ which is a contradiction.

When $\Fix T\neq\varnothing$, \eqref{e:identityonly} follows from \eqref{e:shiftonly}.
\end{proof}

\begin{proposition}\label{p:firm:mod}
Let $T: X \rightarrow X$ be nonexpansive. Then $T$ is firmly nonexpansive if and only if $
k(T)\leqslant 1/2$.
\end{proposition}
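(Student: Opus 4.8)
The plan is to prove both implications, and the bridge in each direction is the $k=1/2$ instance of~\eqref{e:average1}: since $1/2\in\left]0,1\right]$, that characterization is available and says that $T$ is $1/2$-averaged if and only if
$$(\forall x\in X)(\forall y\in X)\quad \|Tx-Ty\|^2\leqslant\|x-y\|^2-\|(\Id-T)x-(\Id-T)y\|^2,$$
which is exactly the defining inequality of firm nonexpansiveness after moving one term to the left. From this, necessity is immediate: if $T$ is firmly nonexpansive then the displayed inequality holds, so $T$ is $1/2$-averaged, and hence $1/2\in\{k\in[0,1]\mid T\text{ is }k\text{-averaged}\}$, giving $k(T)\leqslant 1/2$.

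For sufficiency, suppose $k(T)\leqslant 1/2$; by the displayed equivalence it suffices to show that $T$ is $1/2$-averaged. If $k(T)>0$, then $1/2\in[k(T),1]$ and Proposition~\ref{attain} yields at once that $T$ is $1/2$-averaged. If $k(T)=0$, the infimum defining $k(T)$ being $0$ produces some $k_0\in\left]0,1/2\right]$ with $T$ being $k_0$-averaged, and the monotonicity of averagedness in the parameter (noted after~\eqref{e:average2}) upgrades this to $1/2$-averagedness; alternatively, Proposition~\ref{zero} identifies $T$ as $\Id+v$ for some $v\in X$, for which $(\Id-T)x-(\Id-T)y=0$ and $\|Tx-Ty\|=\|x-y\|$, so the displayed inequality holds trivially.

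I do not expect a real obstacle: everything reduces to recognizing the $k=1/2$ case of~\eqref{e:average1} as firm nonexpansiveness. The only point that needs a moment's care is that Proposition~\ref{attain} guarantees the modulus is attained only when $k(T)>0$, which is why the degenerate case $k(T)=0$ must be handled separately (via Proposition~\ref{zero} or the monotonicity remark).
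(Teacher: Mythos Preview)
Your proof is correct and follows essentially the same route as the paper's: both directions hinge on the equivalence between firm nonexpansiveness and $1/2$-averagedness, the forward direction is immediate from this, and the reverse direction splits into the case $k(T)>0$ handled by Proposition~\ref{attain} and the case $k(T)=0$ handled by Proposition~\ref{zero}. Your write-up is in fact slightly more careful than the paper's terse version, since you explicitly cover $k(T)=1/2$ under the $k(T)>0$ branch.
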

\begin{proof} ``$\Leftarrow$":
When $0<k(T)<1/2$,
apply Proposition~\ref{attain}. When $k(T)=0$, apply
Proposition~\ref{zero}.

``$\Rightarrow$": The assumption implies that $T$ is $1/2$-averaged. Hence $k(T)\leqslant 1/2$.
\end{proof}

\begin{example}\label{e:const} If $T:X\rightarrow X$ is a constant mapping, i.e., $(\exists v\in X)(\forall x\in X) \ Tx=v,$
then $k(T)=1/2$.
\end{example}
\begin{proof} Because $T$ is firmly nonexpansive, $k(T)\leqslant 1/2$.
By \eqref{e:average2}, if $T$ is $k$-averaged, then
$2k\geqslant 1$, so $k(T)\geqslant 1/2$. Altogether, $k(T)=1/2$.
\end{proof}

We end up this section with a fact on convexity.
\begin{fact}\emph{\cite[Fact 1.3]{BBM}}\label{f:bmoursi}
Let $T_{1}, T_{2}: X \rightarrow X$ be nonexpansive and $\lambda\in [0,1]$.
Then
$k(\lambda T_{1} +(1-\lambda)T_{2})\leqslant \lambda k(T_{1})+(1-\lambda)k(T_{2}).$
Consequently, $T\mapsto k(T)$ is a convex function
on the set of averaged mappings, as well as on the set of firmly nonexpansive mappings.
\end{fact}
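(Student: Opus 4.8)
The plan is to establish the displayed inequality $k(\lambda T_1+(1-\lambda)T_2)\leqslant\lambda k(T_1)+(1-\lambda)k(T_2)$ by writing down an explicit averaged representation of $T:=\lambda T_1+(1-\lambda)T_2$ whose averagedness constant is exactly $\bar k:=\lambda k(T_1)+(1-\lambda)k(T_2)$; the two ``consequently'' assertions then follow at once. I would first dispose of the trivial cases $\lambda\in\{0,1\}$, so assume $\lambda\in\zeroun$, and note that $T$ is nonexpansive (a convex combination of nonexpansive maps), so that $k(T)$ is defined.

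Write $k_i:=k(T_i)$. The one substantive step is to record, for each $i\in\{1,2\}$, a decomposition $T_i=(1-k_i)\Id+k_iN_i+v_i$ with $N_i\colon X\to X$ nonexpansive and $v_i\in X$, arranged so that $v_i=0$ whenever $k_i>0$. Indeed, if $k_i>0$, Proposition~\ref{attain} says $T_i$ is $k_i$-averaged, i.e.\ $T_i=(1-k_i)\Id+k_iN_i$ for some nonexpansive $N_i$, and I take $v_i=0$; if $k_i=0$, Proposition~\ref{zero} gives $T_i=\Id+v_i$ for some $v_i\in X$, and I take $N_i:=\Id$. Substituting both decompositions into $T$ and collecting the multiples of $\Id$ gives
$$T=(1-\bar k)\Id+\lambda k_1N_1+(1-\lambda)k_2N_2+v,\qquad\text{where }v:=\lambda v_1+(1-\lambda)v_2.$$

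Next I would split on whether $\bar k=0$. If $\bar k=0$, then $\lambda k_1=(1-\lambda)k_2=0$, and since $\lambda,1-\lambda>0$ this forces $k_1=k_2=0$; hence $T=\Id+v$ and Proposition~\ref{zero} yields $k(T)=0=\bar k$. If $\bar k>0$, I set
$$N:=\frac{\lambda k_1}{\bar k}\,N_1+\frac{(1-\lambda)k_2}{\bar k}\,N_2+\frac{v}{\bar k};$$
the two coefficients are nonnegative and sum to $1$, so $N$ is a convex combination of nonexpansive operators translated by a constant, hence nonexpansive, and $T=(1-\bar k)\Id+\bar kN$. Thus $T$ is $\bar k$-averaged and $k(T)\leqslant\bar k$. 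Finally, for the ``consequently'' part, note that $\lambda k(T_1)+(1-\lambda)k(T_2)<1$ when $k(T_i)<1$ for both $i$, and is $\leqslant1/2$ when $k(T_i)\leqslant1/2$ for both $i$; so the set of averaged operators and the set of firmly nonexpansive operators are each convex, and the displayed bound is precisely the convexity of $T\mapsto k(T)$ on them.

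This argument has no real obstacle — it is short — but the one point needing care is the degeneracy at $k_i=0$, where the historic averaged representation collapses to $\Id$ rather than $\Id+v_i$; carrying the shifts $v_i$ (and then $v/\bar k$) through absorbs this uniformly. If one wishes to sidestep the case split, an alternative is to argue directly from characterization~\eqref{e:average2}: with $b_i:=(\Id-T_i)x-(\Id-T_i)y$ and $c_i:=\scal{x-y}{b_i}\geqslant0$, $k_i$-averagedness of $T_i$ reads $\|b_i\|^2\leqslant 2k_ic_i$, and since $(\Id-T)x-(\Id-T)y=\lambda b_1+(1-\lambda)b_2$ one needs only $\|\lambda b_1+(1-\lambda)b_2\|^2\leqslant 2\bar k\bigl(\lambda c_1+(1-\lambda)c_2\bigr)$; upon expanding, the only problematic term is the cross term, which is controlled by $\scal{b_1}{b_2}\leqslant\|b_1\|\,\|b_2\|\leqslant 2\sqrt{k_1c_1}\,\sqrt{k_2c_2}\leqslant k_1c_2+k_2c_1$ (Cauchy--Schwarz then AM--GM).
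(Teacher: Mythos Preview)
The paper does not supply its own proof of this statement: it is recorded as a Fact with a citation to \cite[Fact~1.3]{BBM} and is used without further justification. So there is nothing in the paper to compare your argument against.

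Your argument is correct. The main construction --- write each $T_i=(1-k_i)\Id+k_iN_i+v_i$ via Proposition~\ref{attain} (when $k_i>0$) or Proposition~\ref{zero} (when $k_i=0$), combine, and absorb the shift into $N$ --- produces an honest $\bar k$-averaged representation of $T$, giving $k(T)\leqslant\bar k$. Your handling of the degeneracy at $k_i=0$ is exactly the point that needs care, and you treat it cleanly. The alternative route via characterization~\eqref{e:average2} also goes through: rewriting \eqref{e:average2} as $\|b_i\|^2\leqslant 2k_ic_i$ (which is vacuous when $k_i=0$ since then $b_i=0$ by Proposition~\ref{zero}) and then bounding the cross term by Cauchy--Schwarz followed by AM--GM is valid. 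One small remark on the ``consequently'' clause: in this paper's convention (see the footnote to Definition~\ref{d:averaged:his}) every nonexpansive map is $1$-averaged, so the set of averaged mappings is simply the set of nonexpansive mappings, and its convexity is immediate; your observation that $k(T_i)\leqslant 1/2$ forces $\bar k\leqslant 1/2$ (whence $T$ is firmly nonexpansive by Proposition~\ref{p:firm:mod}) is the substantive half.
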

\begin{corollary} Let $T: X \rightarrow X$ be nonexpansive and $\lambda\in [0,1]$.
Then $k(\lambda T)\leqslant \lambda k(T)+(1-\lambda)/2$.
\end{corollary}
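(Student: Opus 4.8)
The plan is to realize $\lambda T$ as a convex combination of $T$ with a constant operator and then invoke the convexity estimate already established in Fact~\ref{f:bmoursi}. First I would check the trivial but necessary point that $\lambda T$ is indeed nonexpansive: for $\lambda\in[0,1]$ and $x,y\in X$ one has $\|\lambda Tx-\lambda Ty\|=\lambda\|Tx-Ty\|\leqslant\lambda\|x-y\|\leqslant\|x-y\|$, so $k(\lambda T)$ is well defined.

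Next, the key observation is the decomposition $\lambda T=\lambda T+(1-\lambda)Z$, where $Z:X\to X$ is the zero operator $Zx:=0$. Since $Z$ is a constant mapping, it is nonexpansive and Example~\ref{e:const} gives $k(Z)=1/2$. Applying Fact~\ref{f:bmoursi} with $T_1:=T$ and $T_2:=Z$ yields
$$
k(\lambda T)=k\big(\lambda T_1+(1-\lambda)T_2\big)\leqslant \lambda k(T_1)+(1-\lambda)k(T_2)=\lambda k(T)+\tfrac{1-\lambda}{2},
$$
which is exactly the claimed inequality.

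I do not anticipate any real obstacle here: the statement is a direct corollary, and the only things to verify are the nonexpansiveness of $\lambda T$ and the applicability of Fact~\ref{f:bmoursi}, both of which are immediate. One could also remark that the boundary cases $\lambda=0$ (both sides equal $1/2$) and $\lambda=1$ (both sides equal $k(T)$) are consistent, and that more generally $\lambda T+v$ has the same modulus by Proposition~\ref{+v}\ref{i:shift1}, so one may replace $Z$ by any constant operator without changing the argument.
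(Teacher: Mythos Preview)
Your argument is correct and matches the paper's own proof essentially verbatim: the paper also writes $\lambda T=\lambda T+(1-\lambda)Z$ with $Z$ the zero mapping, invokes Example~\ref{e:const} for $k(Z)=1/2$, and applies Fact~\ref{f:bmoursi}. Your added checks on nonexpansiveness and the boundary cases are fine but not required.
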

\begin{proof}
Let $T_2$ be zero mapping in Fact~\ref{f:bmoursi} and apply Example~\ref{e:const}.
\end{proof}

\subsection{Bijective theorem}
In this subsection, we will show that normally nonexpansive operator must be bijective and bi-Lipschitz. Firstly we prove that normally nonexpansive operators must be bi-Lipschitz and injective.

\begin{lemma}\label{lemma}
Let $T: X \rightarrow X$ be normally nonexpansive. Then $T$ is a bi-Lipschitz homeomorphism from $X$ to $\operatorname{ran} T$. In particular, $T$ is injective.
\end{lemma}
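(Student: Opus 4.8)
The plan is to quantify the bi-Lipschitz estimate directly from the averagedness characterization \eqref{e:average1}. Since $T$ is normally nonexpansive, $k := k(T) < 1/2$, and by Proposition~\ref{attain} (together with Proposition~\ref{zero} if $k=0$, where $T = \Id + v$ is trivially bi-Lipschitz) we may assume $0 < k < 1/2$ and that $T$ is genuinely $k$-averaged, so \eqref{e:average1} holds: for all $x,y \in X$,
$$
\|Tx-Ty\|^2 \leqslant \|x-y\|^2 - \frac{1-k}{k}\,\|(\Id-T)x-(\Id-T)y\|^2.
$$
The goal is a lower bound $\|Tx-Ty\| \geqslant c\,\|x-y\|$ for some $c>0$ depending only on $k$; combined with nonexpansiveness $\|Tx-Ty\|\leqslant\|x-y\|$, this gives that $T$ is injective and that $T^{-1}\colon \ran T \to X$ is Lipschitz, hence $T$ is a bi-Lipschitz homeomorphism onto $\ran T$.

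The key computation is as follows. Write $a = \|x-y\|$, $b = \|Tx-Ty\|$, and $d = \|(\Id-T)x-(\Id-T)y\|$. The triangle inequality gives $a \leqslant b + d$, i.e. $d \geqslant a - b$; since also $d \geqslant 0$ we get $d \geqslant (a-b)_+$. Plugging into the displayed inequality, whenever $b \leqslant a$ we have
$$
b^2 \leqslant a^2 - \frac{1-k}{k}(a-b)^2.
$$
Set $t = b/a \in [0,1]$ (the case $a=0$ being trivial). Then $t^2 \leqslant 1 - \frac{1-k}{k}(1-t)^2$, i.e. $\frac{1-k}{k}(1-t)^2 \leqslant 1 - t^2 = (1-t)(1+t)$. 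If $t<1$, dividing by $1-t>0$ yields $\frac{1-k}{k}(1-t) \leqslant 1+t$, which rearranges to $t \geqslant \frac{1-2k}{1} \cdot \frac{1}{1} = \frac{(1-k)/k - 1}{(1-k)/k + 1} = \frac{1-2k}{1}$; more precisely one gets $t \geqslant 1 - 2k$. Hence in all cases $\|Tx-Ty\| \geqslant (1-2k)\,\|x-y\|$, and since $k<1/2$ the constant $1-2k$ is strictly positive.

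I do not expect a serious obstacle here; the main point to get right is the elementary algebra extracting the constant $1-2k$ from the quadratic inequality, and the bookkeeping that the argument is vacuous/trivial exactly when $x=y$ or when $k=0$ (handled separately via Proposition~\ref{zero}). With the two-sided estimate $(1-2k)\|x-y\| \leqslant \|Tx-Ty\| \leqslant \|x-y\|$ in hand, injectivity is immediate, $T^{-1}$ on $\ran T$ is $(1-2k)^{-1}$-Lipschitz, and continuity of $T$ is just nonexpansiveness; so $T$ is a bi-Lipschitz homeomorphism onto its range, completing the proof.
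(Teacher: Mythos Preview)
Your proof is correct and reaches the same bi-Lipschitz constant $1-2k(T)$ as the paper; the only difference is cosmetic: the paper starts from characterization~\eqref{e:average2} and applies Cauchy--Schwarz to the inner-product term, whereas you start from characterization~\eqref{e:average1} and apply the triangle inequality to $\|(\Id-T)x-(\Id-T)y\|$. Both routes collapse to the same one-variable quadratic inequality and the same conclusion, so this is essentially the paper's argument (though your display of the rearrangement step is garbled and should be cleaned up to simply $(1-k)(1-t)\leqslant k(1+t)\Rightarrow t\geqslant 1-2k$).
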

\begin{proof}
In view of Proposition \ref{zero}, we may assume $k(T)>0$. Then $T$ is $k(T)$-averaged by Proposition~\ref{attain}, i.e., $$
(\forall x \in X)(\forall y \in X) \quad\|T x-T y\|^2+(1-2 k(T))\|x-y\|^2 \leqslant 2(1-k(T))\langle x-y, T x-T y\rangle.
$$

Since $k(T)<\frac{1}{2}$, there exists $\alpha \in (0,\frac{1}{2})$ such that $k(T)=\frac{1}{2}-\alpha$. Substituting $k(T)$ in above inequality and using the Cauchy-Schwarz inequality, we have
$$\begin{aligned}\left\|Tx-Ty\right\|^2+2 \alpha\|x-y\|^2 & \leqslant(1+2 \alpha)\langle x-y, T x-T y\rangle, \\ \left\|Tx-Ty\right\|^2+2 \alpha\|x-y\|^2 & \leqslant(1+2 \alpha)\left(\|x-y\|\left\|Tx-Ty\right\|\right), \\ 2 \alpha\left(\|x-y\|^2-\|x-y\|\left\|Tx-Ty\right\|\right) & \leqslant\|x-y\|\left\|Tx-Ty\right\|-\left\|Tx-Ty\right\|^2, \\ 2 \alpha\|x-y\|\left(\|x-y\|-\left\|Tx-Ty\right\|\right) & \leqslant\left\|Tx-Ty\right\|\left(\|x-y\|-\left\|Tx-Ty\right\|\right).\end{aligned}$$
Now if $\|x-y\|-\|T x-T y\|=0$, then $\left\|Tx-Ty\right\|=\|x-y\| \geqslant 2\alpha\|x-y\|$ since $2\alpha<1$. If $\|x-y\|-\left\|Tx-Ty\right\| \neq 0$, then $2 \alpha\|x-y\| \leqslant\|T x-T y\|$. Thus in both cases we have $2 \alpha\|x-y\| \leqslant\|T x-T y\|$. Combining it with $\|T x-T y\| \leqslant\|x-y\|$, we have $$
2 \alpha\|x-y\| \leqslant\left\|Tx-Ty\right\| \leqslant\|x-y\|.
$$
i.e., $T$ is a bi-Lipschitz homeomorphism from $X$ to $\operatorname{ran} T$.
\end{proof}

Next we make use of monotone operator theory to prove that normally nonexpansive operators must also be surjective.

\begin{fact}\label{firm}\emph{\cite[Example 20.30]{BC}}
Let $T: X \rightarrow X$ be firmly nonexpansive. Then $T$ is maximally monotone.  \end{fact}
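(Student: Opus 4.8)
The plan is to split the assertion into monotonicity and maximality. For \textbf{monotonicity}, I would start from the defining inequality of firm nonexpansiveness, expand the term $\|(\Id-T)x-(\Id-T)y\|^2$, and cancel $\|x-y\|^2$ from both sides; this yields the equivalent inner-product form $\|Tx-Ty\|^2\leqslant\langle x-y,Tx-Ty\rangle$ for all $x,y\in X$. Since the left-hand side is nonnegative, in particular $\langle x-y,Tx-Ty\rangle\geqslant 0$, so $T$, regarded as the single-valued operator with $\gra T=\{(x,Tx)\mid x\in X\}$ and full domain, is monotone.

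For \textbf{maximality} I would use a Minty-type surjectivity argument. Recall that a monotone operator $T$ is maximally monotone iff every pair $(a,b)\in X\times X$ that is monotonically related to $\gra T$ (i.e. $\langle a-x,b-Tx\rangle\geqslant 0$ for all $x\in X$) already lies in $\gra T$. The key auxiliary fact is that $\Id+T$ is surjective. To prove this, I would use that a firmly nonexpansive operator is $\tfrac{1}{2}$-averaged (recalled in the Introduction), so $T=\tfrac{1}{2}\Id+\tfrac{1}{2}N$ with $N:=2T-\Id$ nonexpansive; then, for fixed $z\in X$, the equation $x+Tx=z$ is equivalent to the fixed-point equation $x=\tfrac{1}{3}(2z-Nx)$, whose right-hand side is a $\tfrac{1}{3}$-Lipschitz self-map of $X$. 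Banach's fixed-point theorem supplies a (unique) solution, hence $\ran(\Id+T)=X$.

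With surjectivity available, suppose $(a,b)$ is monotonically related to $\gra T$. Pick $x^\ast\in X$ with $x^\ast+Tx^\ast=a+b$, so that $a-x^\ast=Tx^\ast-b$. Substituting $x=x^\ast$ into the monotone-relatedness inequality gives $\langle Tx^\ast-b,\ b-Tx^\ast\rangle\geqslant 0$, i.e. $-\|Tx^\ast-b\|^2\geqslant 0$; therefore $Tx^\ast=b$, and then $x^\ast=a+b-Tx^\ast=a$, so $b=Ta$. Thus $(a,b)\in\gra T$, proving maximal monotonicity.

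I expect the only nontrivial point to be the surjectivity of $\Id+T$: the naive fixed-point map $x\mapsto z-Tx$ is merely nonexpansive, not a contraction, so one genuinely has to exploit the $\tfrac{1}{2}$-averaged structure to produce an honest contraction (here with ratio $\tfrac{1}{3}$). Everything else is elementary once the inner-product reformulation of firm nonexpansiveness and the standard characterization of maximality are in place; alternatively, one could invoke the general fact that a monotone, everywhere-defined, continuous operator on a Hilbert space is maximally monotone, but the Minty argument above keeps the proof self-contained.
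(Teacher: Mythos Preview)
Your proof is correct. The monotonicity step is the standard inner-product reformulation of firm nonexpansiveness, and your maximality argument via surjectivity of $\Id+T$ is clean: rewriting $x+Tx=z$ as the $\tfrac{1}{3}$-contraction fixed-point problem $x=\tfrac{1}{3}(2z-Nx)$ with $N=2T-\Id$ is exactly the right move, and the Minty-style conclusion is executed correctly.

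As for comparison: the paper does not actually prove this statement. It is recorded as a Fact with a bare citation to \cite[Example 20.30]{BC}, so there is no in-paper argument to compare against. Your write-up therefore goes beyond what the paper supplies, giving a self-contained proof rather than deferring to the reference. The alternative you mention at the end---that a continuous, everywhere-defined monotone operator on a Hilbert space is maximally monotone---is in fact the route taken in the cited source (Bauschke--Combettes), where firm nonexpansiveness gives continuity and monotonicity, and maximality then follows from that general principle. Your Banach fixed-point approach is slightly more hands-on but entirely equivalent in strength and arguably more transparent for this specific setting.
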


\begin{fact}[Rockafellar–Vesely]\emph{\cite[Corollary 21.24]{BC}}\label{RV}
Let $A: X \rightrightarrows X$ be a maximally monotone operator such that $
\lim _{\|x\| \rightarrow+\infty} \inf \|A x\|=+\infty.
$
Then $A$ is surjective.
\end{fact}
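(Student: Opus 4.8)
\medskip

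\noindent\textbf{Proof proposal.} The statement is the classical Rockafellar--Vesely surjectivity theorem, and the plan is to prove it by a resolvent-approximation argument combined with the weak--strong sequential closedness of the graph of a maximally monotone operator.

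First I would reduce the problem to showing $0 \in \ran A$. Fix $y \in X$ and set $B := A - y$, i.e., $(\forall x\in X)\ Bx := Ax - y$. Translating the range of a maximally monotone operator by a fixed vector preserves maximal monotonicity, and since $\inf\|Bx\| \geq \inf\|Ax\| - \|y\|$ for every $x$, the operator $B$ again satisfies $\lim_{\|x\|\to+\infty}\inf\|Bx\| = +\infty$. Hence $y \in \ran A$ follows once we know $0 \in \ran B$, so it suffices to prove: every maximally monotone $A$ with $\lim_{\|x\|\to+\infty}\inf\|Ax\| = +\infty$ satisfies $0 \in \ran A$.

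Next I would invoke Minty's theorem: for each $n \in \NN$ the resolvent $(\Id + nA)^{-1}$ is single-valued, firmly nonexpansive and has full domain. Put $x_n := (\Id + nA)^{-1}(0)$, so that $0 \in x_n + nAx_n$, that is, $v_n := -x_n/n \in Ax_n$; in particular $\inf\|Ax_n\| \leq \|v_n\| = \|x_n\|/n$. To bound $(x_n)$, I would pick any $(a,b) \in \gra A$ (which exists since $A$ is proper) and apply monotonicity to the pairs $(x_n, v_n)$ and $(a,b)$: from $\langle x_n - a, v_n - b\rangle \geq 0$ together with $v_n = -x_n/n$, Cauchy--Schwarz yields $\|x_n\|^2 \leq n\|x_n\|\,\|b\| + \|a\|\,\|x_n\| + n\|a\|\,\|b\|$, whence $\|x_n\| = O(n)$ and therefore $\|v_n\| = \|x_n\|/n$ is bounded, say $\|v_n\| \leq M$. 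Consequently $\inf\|Ax_n\| \leq M$ for all $n$, and the coercivity hypothesis now forces $(x_n)$ itself to be bounded: if $\|x_{n_k}\| \to +\infty$ along a subsequence then $\inf\|Ax_{n_k}\| \to +\infty$, contradicting $\inf\|Ax_{n_k}\| \leq M$.

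Finally, boundedness of $(x_n)$ gives $v_n = -x_n/n \to 0$ strongly, and I would extract a weakly convergent subsequence $x_{n_k} \weakly x$. Then $(x_{n_k}, v_{n_k}) \in \gra A$ with $x_{n_k} \weakly x$ and $v_{n_k} \to 0$. Since the graph of a maximally monotone operator is sequentially closed in the weak--strong topology of $X \times X$, we obtain $(x,0) \in \gra A$, i.e. $0 \in Ax$, so $0 \in \ran A$, which closes the argument. The step I expect to be the main obstacle is the boundedness of $(x_n)$: the monotonicity estimate only gives $\|x_n\| = O(n)$ a priori, and one must feed the resulting bound on $\|v_n\|$ back into the coercivity hypothesis, in the right order, to recover genuine boundedness; after that, the weak--strong closedness of $\gra A$ (the standard demiclosedness property of maximally monotone operators) finishes the proof.
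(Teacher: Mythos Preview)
The paper does not prove this statement at all; it is quoted verbatim as a known result (Corollary~21.24 in Bauschke--Combettes) and used as a black box in the proof of Lemma~\ref{main1}. There is therefore no ``paper's own proof'' to compare against.

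That said, your argument is a correct and self-contained proof of the Rockafellar--Vesely theorem. The reduction to $0\in\ran A$ via $B=A-y$ is valid; the monotonicity estimate with a fixed pair $(a,b)\in\gra A$ indeed yields $\|x_n\|=O(n)$, hence $\|v_n\|=\|x_n\|/n$ is bounded; feeding that bound on $\inf\|Ax_n\|$ back into the coercivity hypothesis forces $(x_n)$ itself to be bounded; and the sequential weak--strong closedness of the graph of a maximally monotone operator then gives $(x,0)\in\gra A$. This is essentially the standard resolvent/Yosida-approximation proof that one finds in the cited reference, so your approach is not a detour but the expected one.
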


\begin{lemma}\label{main1}
Let $T: X \rightarrow X$ be normally nonexpansive. Then $T$ is surjective.
\end{lemma}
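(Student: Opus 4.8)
The plan is to combine Lemma~\ref{lemma} with the two monotone-operator facts just stated. Since $T$ is normally nonexpansive, $k(T)<1/2$, so by Proposition~\ref{p:firm:mod} $T$ is firmly nonexpansive, hence maximally monotone by Fact~\ref{firm}. To apply the Rockafellar--Vesely result (Fact~\ref{RV}) I need the coercivity-type condition $\inf\|Tx\|\to+\infty$ as $\|x\|\to+\infty$. This is exactly where the lower bi-Lipschitz bound from Lemma~\ref{lemma} comes in.

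First I would fix any basepoint, say compute $\|Tx-T0\|\geqslant 2\alpha\|x\|$ for all $x\in X$, where $\alpha\in(0,1/2)$ satisfies $k(T)=\tfrac12-\alpha$ (this is precisely the estimate established in the proof of Lemma~\ref{lemma}, applied with $y=0$). Then by the triangle inequality $\|Tx\|\geqslant \|Tx-T0\|-\|T0\|\geqslant 2\alpha\|x\|-\|T0\|$, and since $\alpha>0$ this tends to $+\infty$ as $\|x\|\to+\infty$. Hence $\liminf_{\|x\|\to+\infty}\|Tx\|=+\infty$, which is the hypothesis of Fact~\ref{RV}. Since $T$ is maximally monotone, Fact~\ref{RV} yields that $T$ is surjective, i.e. $\ran T=X$.

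I do not expect any real obstacle here: the proof is essentially a two-line assembly of earlier results. The only point requiring a word of care is making sure the bi-Lipschitz lower bound of Lemma~\ref{lemma} is stated in a form usable here — Lemma~\ref{lemma} gives $2\alpha\|x-y\|\leqslant\|Tx-Ty\|$ for all $x,y$, so specializing $y=0$ and using the triangle inequality suffices. Combined with Lemma~\ref{lemma} (which already gives injectivity and the homeomorphism onto $\ran T$), this Lemma upgrades the conclusion: a normally nonexpansive operator is a bi-Lipschitz homeomorphism of $X$ onto $X$, which is the global bijectivity statement the section is after.

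\begin{proof}
By Proposition~\ref{zero} we may assume $k(T)>0$, and by Proposition~\ref{p:firm:mod} the operator $T$ is firmly nonexpansive, hence maximally monotone by Fact~\ref{firm}. Write $k(T)=\tfrac12-\alpha$ with $\alpha\in(0,\tfrac12)$. By the estimate established in the proof of Lemma~\ref{lemma} (applied with the second argument equal to $0$),
$$
(\forall x\in X)\quad 2\alpha\|x\|=2\alpha\|x-0\|\leqslant\|Tx-T0\|.
$$
Hence, by the triangle inequality,
$$
(\forall x\in X)\quad \|Tx\|\geqslant\|Tx-T0\|-\|T0\|\geqslant 2\alpha\|x\|-\|T0\|,
$$
and since $\alpha>0$ this yields $\lim_{\|x\|\to+\infty}\inf\|Tx\|=+\infty$. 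As $T$ is maximally monotone, Fact~\ref{RV} implies that $T$ is surjective.
\end{proof}
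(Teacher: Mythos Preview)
Your proof is correct and follows essentially the same route as the paper: use the bi-Lipschitz lower bound from Lemma~\ref{lemma} with $y=0$ and the triangle inequality to get $\|Tx\|\to+\infty$, then combine maximal monotonicity (Fact~\ref{firm}) with Rockafellar--Vesely (Fact~\ref{RV}). The only cosmetic difference is that the paper quotes the bi-Lipschitz constant abstractly as some $\varepsilon>0$, whereas you track the explicit value $2\alpha$ from the proof of Lemma~\ref{lemma}; the separate handling of $k(T)=0$ via Proposition~\ref{zero} is harmless but not needed, since Lemma~\ref{lemma} already covers that case.
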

\begin{proof}
By Lemma \ref{lemma}, $T$ is bi-Lipschitz since $T$ is normally nonexpansive. Thus there exists $\varepsilon>0$, such that $\varepsilon\|x-y\| \leqslant\|T x-T y\|$. Let $y=0$, then $\varepsilon\|x\| \leqslant\|T x-T 0\|$. Using the triangle inequality, we have
$$
\|T x\| \geqslant \varepsilon\|x\|-\|T 0\| .
$$Thus $\lim _{\|x\| \rightarrow \infty}\|T x\|=\infty$. Combining Fact \ref{firm} with Fact~\ref{RV} we complete the proof.
\end{proof}

\begin{theorem}[bi-Lipschitz homeomorphism]\label{bijectivethe}
Let $T: X \rightarrow X$ be normally nonexpansive. Then $T$ is a bi-Lipschitz homeomorphism of $X$. In particular, $T$ is bijective.
\end{theorem}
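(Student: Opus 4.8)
The plan is to assemble the theorem from the two lemmas just proved. By Lemma~\ref{lemma}, a normally nonexpansive $T$ is a bi-Lipschitz homeomorphism from $X$ onto $\ran T$; in particular $T$ is injective and the inverse map $T^{-1}\colon \ran T\to X$ is Lipschitz (with constant $1/(2\alpha)$ in the notation of that proof). By Lemma~\ref{main1}, $T$ is surjective, so $\ran T = X$. Combining the two, $T$ is a bijection of $X$ onto $X$ whose inverse is Lipschitz, and $T$ itself is $1$-Lipschitz (indeed nonexpansive by hypothesis); hence $T$ is a bi-Lipschitz homeomorphism of $X$.

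Concretely, I would write: ``By Lemma~\ref{main1}, $\ran T = X$. By Lemma~\ref{lemma}, $T$ is a bi-Lipschitz homeomorphism from $X$ onto $\ran T = X$. In particular $T$ is bijective.'' That is essentially the whole argument. If one wants to be slightly more explicit about continuity of $T^{-1}$ on all of $X$, one can note that the estimate $2\alpha\|x-y\|\leqslant\|Tx-Ty\|$ from Lemma~\ref{lemma}, rewritten with $u=Tx$, $w=Ty$, gives $\|T^{-1}u - T^{-1}w\|\leqslant \tfrac{1}{2\alpha}\|u-w\|$ for all $u,w\in X$ once surjectivity is known, so $T^{-1}$ is globally Lipschitz.

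There is essentially no obstacle here: all the real work has already been done in Lemma~\ref{lemma} (the bi-Lipschitz lower bound, extracted from the $k(T)$-averagedness characterization and Cauchy--Schwarz) and in Lemma~\ref{main1} (surjectivity, via Fact~\ref{firm} that firmly nonexpansive operators are maximally monotone together with the Rockafellar--Vesely coercivity criterion, Fact~\ref{RV}). The theorem is just the conjunction of these, so the ``proof'' is a one-line citation of the two lemmas. The only point worth a moment's care is to make sure the coercivity hypothesis of Fact~\ref{RV} is legitimately available --- but that was already verified inside the proof of Lemma~\ref{main1} via $\|Tx\|\geqslant\varepsilon\|x\|-\|T0\|\to\infty$ --- so nothing new is needed.
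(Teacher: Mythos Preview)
Your proposal is correct and matches the paper's own proof exactly: the paper simply writes ``Combine Lemma~\ref{lemma} and Lemma~\ref{main1}.'' Your additional remarks about making the Lipschitz bound on $T^{-1}$ explicit are fine but unnecessary, since Lemma~\ref{lemma} already delivers the bi-Lipschitz conclusion on $\ran T$, and Lemma~\ref{main1} gives $\ran T = X$.
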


\begin{proof}
Combine Lemma \ref{lemma} and Lemma \ref{main1}.
\end{proof}

Taking the contrapositive of Theorem \ref{bijectivethe}, we obtain a lower bound for modulus of averagedness.
\begin{corollary}\label{Alowerbound}
Let $T: X \rightarrow X$ be nonexpansive. If $T$ is not bijective, then $k(T) \geqslant 1/2$.
\end{corollary}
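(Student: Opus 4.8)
The plan is to invoke Theorem~\ref{bijectivethe} directly via contraposition; no new argument is needed. Theorem~\ref{bijectivethe} asserts that every normally nonexpansive operator—that is, every nonexpansive $T$ with $k(T)<1/2$—is a bi-Lipschitz homeomorphism of $X$, in particular bijective. So for a nonexpansive operator $T$, failure of bijectivity rules out $k(T)<1/2$, forcing $k(T)\geqslant 1/2$.

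More explicitly, I would unwind the implication as follows. Suppose $T:X\to X$ is nonexpansive and $k(T)<1/2$. Then $T$ is normally nonexpansive by definition, so Lemma~\ref{lemma} gives that $T$ is injective and bi-Lipschitz onto $\operatorname{ran}T$, and Lemma~\ref{main1} (through Fact~\ref{firm} and the Rockafellar--Vesely surjectivity criterion, Fact~\ref{RV}) gives that $T$ is surjective; together these say $T$ is bijective. Taking the contrapositive of the resulting implication ``$T$ nonexpansive and $k(T)<1/2$ $\Rightarrow$ $T$ bijective'' yields exactly ``$T$ nonexpansive and not bijective $\Rightarrow$ $k(T)\geqslant 1/2$.''

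There is essentially no obstacle: all the substance lives in Theorem~\ref{bijectivethe}, and the corollary is a purely logical restatement. The only small points to keep straight are that nonexpansiveness of $T$ must be retained as a hypothesis simply so that $k(T)$ is defined at all, and that the dichotomy should be stated as $k(T)<1/2$ versus $k(T)\geqslant 1/2$ so that the contrapositive produces the nonstrict inequality $k(T)\geqslant 1/2$ in the conclusion.
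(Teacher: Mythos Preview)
Your proposal is correct and matches the paper's approach exactly: the paper simply states that the corollary is obtained by taking the contrapositive of Theorem~\ref{bijectivethe}, with no further argument. Your additional unwinding through Lemma~\ref{lemma} and Lemma~\ref{main1} is more detailed than what the paper provides but is entirely in line with it.
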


\begin{remark}
In terms of compact operators (see, e.g., \cite{RU}), Theorem~\ref{bijectivethe} implies that $X$ is finite-dimensional if and only if there exists a normally nonexpansive compact operator on $X$.
\end{remark}

\begin{example}[averagedness of projection]\label{projectionaverage}
Let $C$ be a nonempty closed convex set in $X$ and $C \neq X$. Then $P_C$ is specially nonexpansive.
\end{example}

\begin{proof}
We have $k(P_C) \leqslant 1/2$ since $P_C$ is firmly nonexpansive. Now since $C \neq X$, let $x_0 \in X \backslash C$. Because $P_C\left(x_0\right) \in C$ and $x_0 \in X \backslash C$, we have $P_C\left(x_0\right) \neq x_0$. However, $P_C\left(x_0\right)=P_C\left(P_C\left(x_0\right)\right)$. Thus $P_C$ is not injective. Therefore $P_C$ is specially nonexpansive by Corollary \ref{Alowerbound}. Another way is to observe that $P_C$ is not surjective.
\end{proof}

\begin{corollary}
Let $M \in \mathbb{R}^{n \times n}$ be nonexpansive. If $\operatorname{det}(M)=0$, then $k(M) \geqslant 1 / 2$.
\end{corollary}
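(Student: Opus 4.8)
The plan is to deduce this immediately from Corollary~\ref{Alowerbound}. The only thing that needs to be observed is that, because $M$ acts on the finite-dimensional space $\RR^n$, the condition $\det(M)=0$ is equivalent to $M$ failing to be bijective: a square matrix over $\RR$ is invertible if and only if its determinant is nonzero, and for a linear endomorphism of a finite-dimensional space injectivity, surjectivity, and bijectivity all coincide. Thus $\det(M)=0$ forces $\ker M \neq \{0\}$, so $M$ is not injective and in particular not bijective.

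Concretely, I would argue: since $M$ is nonexpansive and not bijective (by the determinant hypothesis together with the rank–nullity theorem), apply Corollary~\ref{Alowerbound} with $T := M \colon \RR^n \to \RR^n$ to conclude $k(M) \geqslant 1/2$. One might add a sentence remarking that finite-dimensionality is essential here only insofar as it lets us replace the non-bijectivity hypothesis of Corollary~\ref{Alowerbound} by the more concrete algebraic condition $\det(M)=0$; on an infinite-dimensional Hilbert space a singular-looking operator need not fail bijectivity, and conversely Corollary~\ref{Alowerbound} is what does the real work.

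There is essentially no obstacle: the statement is a transcription of Corollary~\ref{Alowerbound} into the language of matrices, so the ``hard part'' — namely Theorem~\ref{bijectivethe} and its contrapositive — has already been done. The proof will consist of a single sentence invoking Corollary~\ref{Alowerbound} after noting $\det(M)=0 \Rightarrow M$ not injective.

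\begin{proof}
Since $M \in \RR^{n\times n}$ and $\det(M)=0$, the rank--nullity theorem gives $\ker M \neq \{0\}$, so $M\colon \RR^n \to \RR^n$ is not injective, hence not bijective. As $M$ is nonexpansive, Corollary~\ref{Alowerbound} yields $k(M)\geqslant 1/2$.
\end{proof}
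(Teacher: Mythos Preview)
Your proof is correct and matches the paper's approach exactly: the corollary is stated in the paper without proof as an immediate consequence of Corollary~\ref{Alowerbound}, and your argument supplies precisely the one missing observation that $\det(M)=0$ implies $M$ is not bijective.
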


\begin{remark}
Consider the matrix $$
A=\frac{1}{2}\left(\begin{array}{cc}
2 & 0 \\
0 & -1
\end{array}\right).
$$Then one can verify that $k(A)=3/4>1/2$. However $\operatorname{det}(A) \neq 0$ and thus $A$ is a bi-Lipschitz homeomorphism of $\RR^{2}$. Hence the converse of Theorem~\ref{bijectivethe} fails. We will show later that the converse of Theorem~\ref{bijectivethe} does hold
when $T$ is a proximal operator; see Theorem~\ref{t:lipsmooth}.
\end{remark}

\section{Operator compositions and limiting operator}\label{s:app}
In this section, we examine the modulus of
averagedness of operator compositions and explore its asymptotic properties.

\subsection{Composition}
\begin{proposition}\label{Twoes}
Let $T_1$ and $T_2$ be nonexpansive operators from $X$ to $X$. Suppose one of the following holds:
\begin{enumerate}
\item $T_1$ is not surjective.
\item $T_2$ is not injective.
\item $T_1$ is bijective and $T_2$ is not surjective.
\item $T_2$ is bijective and $T_1$ is not injective.
\end{enumerate}
Then $k(T_1 T_2) \geqslant 1/2$.
\end{proposition}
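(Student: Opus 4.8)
The plan is to reduce everything to Corollary~\ref{Alowerbound}. Since a composition of two nonexpansive operators is again nonexpansive, $T_1 T_2$ is nonexpansive; so it suffices to show that under each of the four hypotheses the operator $T_1 T_2$ fails to be bijective, and then $k(T_1 T_2)\geqslant 1/2$ follows at once.

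First I would dispatch the two ``pure'' cases. If $T_1$ is not surjective, then $\operatorname{ran}(T_1 T_2)\subseteq\operatorname{ran} T_1\subsetneq X$, so $T_1 T_2$ is not surjective. If $T_2$ is not injective, choose $x\neq y$ with $T_2 x=T_2 y$; then $T_1 T_2 x=T_1 T_2 y$, so $T_1 T_2$ is not injective. In either case $T_1 T_2$ is not bijective.

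Next the two ``mixed'' cases, where the bijectivity hypothesis is used as a bookkeeping device. Suppose $T_1$ is bijective and $T_2$ is not surjective; pick $z\in X\setminus\operatorname{ran} T_2$. Then $T_1 z\notin\operatorname{ran}(T_1 T_2)=T_1(\operatorname{ran} T_2)$: otherwise $T_1 z=T_1 w$ for some $w\in\operatorname{ran} T_2$, and injectivity of $T_1$ forces $z=w\in\operatorname{ran} T_2$, a contradiction. Hence $T_1 T_2$ is not surjective. Symmetrically, suppose $T_2$ is bijective and $T_1$ is not injective; choose $u\neq v$ with $T_1 u=T_1 v$, and use surjectivity of $T_2$ to write $u=T_2 x$ and $v=T_2 y$; injectivity of $T_2$ then gives $x\neq y$, while $T_1 T_2 x=T_1 u=T_1 v=T_1 T_2 y$, so $T_1 T_2$ is not injective. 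In both mixed cases $T_1 T_2$ is not bijective.

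Finally, in each of the four cases $T_1 T_2$ is a nonexpansive operator that is not bijective, so Corollary~\ref{Alowerbound} yields $k(T_1 T_2)\geqslant 1/2$, completing the proof. There is no genuine obstacle here; the only point requiring a little care is keeping straight, in cases (iii) and (iv), which half of the bijectivity of the one factor (injectivity or surjectivity) is actually needed to transport the failure of the other factor through the composition.
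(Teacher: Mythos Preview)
Your proof is correct and follows exactly the paper's approach: show $T_1T_2$ is nonexpansive, verify that each of the four hypotheses forces $T_1T_2$ to be non-bijective, then invoke Corollary~\ref{Alowerbound}. The paper simply asserts that ``each one of the four conditions implies that $T_1T_2$ is not bijective,'' whereas you spell out the elementary set-theoretic verifications; this is harmless elaboration rather than a different argument.
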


\begin{proof}
Since $T_1$ and $T_2$ are nonexpansive operators, we have $T_1 T_2$ is nonexpansive as well. Each one of the four conditions implies that $T_1 T_2$ is not bijective. Now use Corollary \ref{Alowerbound}.
\end{proof}

Ogura and Yamada \cite{OY} obtained the following result about the
averagedness of operator compositions.

\begin{fact}\emph{\cite[Theorem 3]{OY} (see also \cite[Proposition 2.4]{CY})} \label{OYFACT}
Let $T_1: X \rightarrow X$ be $\alpha_1$-averaged, and let $T_2: X \rightarrow X$ be $\alpha_2$-averaged, where $\alpha_1, \alpha_2 \in (0,1)$. Set
$$
T=T_1 T_2 \quad \text { and } \quad \alpha=\frac{\alpha_1+\alpha_2-2 \alpha_1 \alpha_2}{1-\alpha_1 \alpha_2} .
$$
Then $\alpha \in( 0,1)$ and $T$ is $\alpha$-averaged.
\end{fact}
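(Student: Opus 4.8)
The statement to prove is Fact~\ref{OYFACT}, the Ogura--Yamada composition bound. Since this is cited as a \texttt{Fact} from \cite{OY}, the paper itself will likely just cite it, but a self-contained proof proposal is as follows.

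\bigskip

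\textbf{Proof proposal.} The plan is to work directly from characterization~\eqref{e:average1} for each factor and then combine the two inequalities. First I would record that, since $T_i$ is $\alpha_i$-averaged, for all $x,y\in X$ we have
$$
\|T_i x-T_i y\|^2\leqslant\|x-y\|^2-\frac{1-\alpha_i}{\alpha_i}\,\|(\Id-T_i)x-(\Id-T_i)y\|^2,\qquad i=1,2.
$$
Fix $x,y\in X$ and set $u=T_2x$, $v=T_2y$, so that $T_1T_2x-T_1T_2y=T_1u-T_1v$. Write $a=\|x-y\|$, $b=\|u-v\|=\|T_2x-T_2y\|$, $c=\|T_1u-T_1v\|=\|Tx-Ty\|$, $p=\|(\Id-T_2)x-(\Id-T_2)y\|$ and $q=\|(\Id-T_1)u-(\Id-T_1)v\|$. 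The two averagedness inequalities become $b^2\leqslant a^2-\tfrac{1-\alpha_2}{\alpha_2}p^2$ and $c^2\leqslant b^2-\tfrac{1-\alpha_1}{\alpha_1}q^2$, and the goal is $c^2\leqslant a^2-\tfrac{1-\alpha}{\alpha}\,d^2$ where $d=\|(\Id-T)x-(\Id-T)y\|$.

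The key algebraic step is to control $d$ in terms of $p$ and $q$. Note
$$
(\Id-T)x-(\Id-T)y=(x-y)-(T_1u-T_1v)=\big[(x-y)-(u-v)\big]+\big[(u-v)-(T_1u-T_1v)\big],
$$
so $d\leqslant p+q$ by the triangle inequality. Chaining the two inequalities gives $c^2\leqslant a^2-\tfrac{1-\alpha_2}{\alpha_2}p^2-\tfrac{1-\alpha_1}{\alpha_1}q^2$, and it remains to show that
$$
\frac{1-\alpha_2}{\alpha_2}\,p^2+\frac{1-\alpha_1}{\alpha_1}\,q^2\;\geqslant\;\frac{1-\alpha}{\alpha}\,(p+q)^2
$$
for the specific $\alpha=\dfrac{\alpha_1+\alpha_2-2\alpha_1\alpha_2}{1-\alpha_1\alpha_2}$. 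This is a two-variable quadratic inequality; by homogeneity it reduces to a one-variable check, or one can invoke the elementary fact that $\min_{s+t=1}\big(\tfrac{1-\alpha_2}{\alpha_2}s^2+\tfrac{1-\alpha_1}{\alpha_1}t^2\big)$ equals the harmonic-type combination $\big(\tfrac{\alpha_1}{1-\alpha_1}+\tfrac{\alpha_2}{1-\alpha_2}\big)^{-1}$, and then verify that this quantity equals exactly $\tfrac{1-\alpha}{\alpha}$ by substituting the formula for $\alpha$ and simplifying. Separately one checks $\alpha\in(0,1)$: positivity is clear since numerator and denominator are positive for $\alpha_1,\alpha_2\in(0,1)$, and $\alpha<1$ follows from $1-\alpha=\tfrac{(1-\alpha_1)(1-\alpha_2)}{1-\alpha_1\alpha_2}>0$.

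\textbf{Main obstacle.} The only real work is the quadratic inequality and the bookkeeping needed to identify the optimal constant with $\tfrac{1-\alpha}{\alpha}$; everything else is routine. A subtlety worth flagging is that the triangle-inequality step $d\leqslant p+q$ is potentially lossy, so one should double-check that the resulting bound is still tight enough to yield precisely the Ogura--Yamada constant rather than a weaker one — it is, because the slack in $d\leqslant p+q$ is compensated by the fact that $T_1$ is applied to $(u,v)$, whose separation $b$ already absorbs part of $p$ through the first inequality. If one wanted the sharp constant with less case analysis, an alternative is to use characterization~\eqref{e:average2} instead and expand inner products, but the norm-based route above is the most transparent.
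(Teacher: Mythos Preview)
Your proposal is correct, and your prediction is also correct: the paper does not prove this statement at all but merely cites it as a Fact from \cite{OY} and \cite{CY}, so there is no in-paper proof to compare against. The argument you give---chain characterization~\eqref{e:average1} for $T_2$ and then $T_1$, bound $\|(\Id-T)x-(\Id-T)y\|$ by $p+q$ via the triangle inequality, and then verify the quadratic inequality
\[
\frac{1-\alpha_2}{\alpha_2}\,p^2+\frac{1-\alpha_1}{\alpha_1}\,q^2\;\geqslant\;\frac{1-\alpha}{\alpha}\,(p+q)^2
\]
by computing $\min_{s+t=1}\big(\tfrac{1-\alpha_2}{\alpha_2}s^2+\tfrac{1-\alpha_1}{\alpha_1}t^2\big)=\big(\tfrac{\alpha_1}{1-\alpha_1}+\tfrac{\alpha_2}{1-\alpha_2}\big)^{-1}=\tfrac{(1-\alpha_1)(1-\alpha_2)}{\alpha_1+\alpha_2-2\alpha_1\alpha_2}=\tfrac{1-\alpha}{\alpha}$---is essentially the proof appearing in \cite[Proposition~2.4]{CY}. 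Your worry about the triangle-inequality step being ``lossy'' is misplaced in the sense that the statement only asserts $T$ is $\alpha$-averaged, not that $\alpha$ is sharp; the chain of inequalities you wrote delivers exactly that, so no further justification is needed.
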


Formulating this result here using the modulus of
averagedness, we have the following result.
\begin{proposition}\label{OYPROP}
Let $T_1: X \rightarrow X$ and $T_2: X \rightarrow X$ be nonexpansive. Suppose $k\left(T_1\right)k\left(T_2\right) \neq 1$. Then
$$
k\left(T_1 T_2\right) \leqslant \frac{k\left(T_1\right)+k\left(T_2\right)-2 k\left(T_1\right) k\left(T_2\right)}{1-k\left(T_1\right) k\left(T_2\right)}.
$$
\end{proposition}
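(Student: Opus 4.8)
The plan is to reduce Proposition~\ref{OYPROP} to Fact~\ref{OYFACT} by a limiting argument, handling the degenerate endpoint cases ($k(T_i)\in\{0,1\}$) separately so that the ratio formula makes sense and the "averaged" hypotheses of Fact~\ref{OYFACT} can be invoked.

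First I would dispose of the easy boundary cases. If either $k(T_1)=0$ or $k(T_2)=0$, then by Proposition~\ref{zero} the corresponding operator is $\Id+v$ for some $v\in X$; using Proposition~\ref{+v} we get $k(T_1T_2)=k(T_2)$ or $k(T_1T_2)=k(T_1)$, and a direct check shows the claimed bound holds with equality (when $k(T_1)=0$ the right-hand side simplifies to $k(T_2)$, and symmetrically). If $k(T_1)=1$ or $k(T_2)=1$ (but not both, by the hypothesis $k(T_1)k(T_2)\ne 1$), say $k(T_2)=1$ and $k(T_1)=:a\in[0,1)$, then the right-hand side equals $\frac{a+1-2a}{1-a}=\frac{1-a}{1-a}=1$, so the inequality $k(T_1T_2)\le 1$ is trivially true since $T_1T_2$ is nonexpansive. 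So from now on I may assume $k(T_1),k(T_2)\in(0,1)$.

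Now set $a=k(T_1)$, $b=k(T_2)$, both in $(0,1)$. By Proposition~\ref{attain}, $T_1$ is $\alpha_1$-averaged and $T_2$ is $\alpha_2$-averaged for \emph{every} $\alpha_1\in[a,1)$ and $\alpha_2\in[b,1)$; in particular I can take $\alpha_1,\alpha_2$ in the open interval $(0,1)$ as Fact~\ref{OYFACT} requires. Applying Fact~\ref{OYFACT}, $T=T_1T_2$ is $\alpha$-averaged with $\alpha=\dfrac{\alpha_1+\alpha_2-2\alpha_1\alpha_2}{1-\alpha_1\alpha_2}$, hence $k(T_1T_2)\le\alpha$. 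It remains to show that, as $\alpha_1\downarrow a$ and $\alpha_2\downarrow b$, the quantity $\alpha$ tends to $\dfrac{a+b-2ab}{1-ab}$, and moreover that this limiting value is an infimum (so that passing to the limit is legitimate). Continuity of the rational function $(s,t)\mapsto\frac{s+t-2st}{1-st}$ at $(a,b)$ — valid since $ab\ne 1$ — gives the limit immediately, so $k(T_1T_2)\le\inf_{\alpha_1>a,\,\alpha_2>b}\alpha=\dfrac{a+b-2ab}{1-ab}$, which is exactly the asserted bound.

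The main technical point to be careful about is monotonicity/well-definedness: one must check that the denominator $1-\alpha_1\alpha_2$ stays positive for $\alpha_1,\alpha_2$ near $a,b$ (true since $\alpha_1\alpha_2\to ab<1$), so that Fact~\ref{OYFACT} genuinely applies and the expression for $\alpha$ is continuous there. The only genuine obstacle is bookkeeping around the endpoints $0$ and $1$ — verifying that the stated inequality degenerates correctly and that the hypothesis $k(T_1)k(T_2)\ne 1$ is exactly what is needed to keep the formula meaningful; the core of the argument is just "apply Fact~\ref{OYFACT} with $\alpha_i$ slightly above $k(T_i)$ and let $\alpha_i\to k(T_i)$," using Proposition~\ref{attain} to license the choice of $\alpha_i$.
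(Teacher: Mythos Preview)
Your proof is correct and follows essentially the same case analysis as the paper's proof. One minor simplification you overlooked: in the main case $k(T_1),k(T_2)\in(0,1)$, the limiting argument is unnecessary, since Proposition~\ref{attain} already guarantees that $T_i$ is $k(T_i)$-averaged (the infimum is attained), so Fact~\ref{OYFACT} can be applied directly with $\alpha_i=k(T_i)$---which is exactly what the paper does.
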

\begin{proof}
Let $\varphi\left(T_1, T_2\right):=\frac{k\left(T_1\right)+k\left(T_2\right)-2 k\left(T_1\right) k\left(T_2\right)}{1-k\left(T_1\right) k\left(T_2\right)}$. We consider five cases.

Case 1: $k\left(T_i\right)=1$ for some $i \in\{1,2\}$. Then $\varphi\left(T_1, T_2\right)=1$. Since $T_1$ and $T_2$ are nonexpansive, we have $T_1 T_2$ is nonexpansive, i.e., $k\left(T_1 T_2\right) \leqslant 1=\varphi\left(T_1, T_2\right)$.

Case 2: $k\left(T_i\right) \in(0,1)$ for any $i \in\{1,2\}$. Then combining Proposition \ref{attain} and Fact \ref{OYFACT} we have $T_1 T_2$ is $\varphi\left(T_1, T_2\right)$-averaged. Thus $k\left(T_1T_2\right) \leqslant \varphi\left(T_1, T_2\right)$.

Case 3: $k\left(T_1\right)=0$ and $k\left(T_2\right) \in(0,1)$. Then there exists $v_1 \in X$ such that $T_1=\Id+v_1$ by Proposition \ref{zero}. Thus $T_1 T_2=T_2+v_1$ and $k\left(T_1 T_2\right)=k\left(T_2+v_1\right)=k\left(T_2\right)$ by Proposition \ref{+v}. While $\varphi\left(T_1, T_2\right)=k\left(T_2\right)$ in this case, we have $k\left(T_1T_2\right)=\varphi\left(T_1, T_2\right)$.

Case 4: $k\left(T_1\right) \in(0,1)$ and $k\left(T_2\right)=0$. Then there exists $v_2 \in X$ such that $T_2=\Id+v_2$ by Proposition \ref{zero}. Thus $T_1 T_2=T_1(\cdot+v_2)$ and $k\left(T_1 T_2\right)=k\left(T_1(\cdot+v_2)\right)=k\left(T_1\right)$ by Proposition \ref{+v}. While $\varphi\left(T_1,T_2\right)=k\left(T_1\right)$ in this case, we have $k\left(T_1T_2\right)=\varphi\left(T_1, T_2\right)$.

Case 5: $k\left(T_1\right)=k\left(T_2\right)=0$. Then there exist $v_1 \in X$ and $v_2 \in X$ such that $T_1=\Id+v_1$ and $T_2=\Id+v_2$ by Proposition \ref{zero}. Thus $T_1 T_2=\mathrm{Id}+v_2+v_1$ and $k\left(T_1 T_2\right)=k\left(\mathrm{Id}+v_2+v_1\right)=0$. While $\varphi\left(T_1,T_2\right)=0$ in this case, we have $k\left(T_1T_2\right)=\varphi\left(T_1, T_2\right)$.

Altogether we complete the proof.
\end{proof}

\begin{proposition}
Let $C$ be a nonempty closed convex set in $X$ and $C \neq X$. Then for any nonexpansive operator $T: X \rightarrow X$:
$$
\frac{1}{2} \leqslant k\left(T \circ P_C\right) \leqslant \frac{1}{2-k(T)}
$$and
$$
\frac{1}{2} \leqslant k\left(P_C \circ T\right) \leqslant \frac{1}{2-k(T)} .
$$
\end{proposition}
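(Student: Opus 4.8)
The plan is to split the two-sided estimate into four one-sided bounds and reduce each to a result already available. The first step is to record the structural facts about $P_C$ that drive everything. Since $C\neq X$, the proof of Example~\ref{projectionaverage} already shows that $P_C$ is specially nonexpansive, i.e. $k(P_C)=1/2$; along the way it also shows $P_C$ is not injective (take $x_0\in X\setminus C$; then $P_Cx_0\neq x_0$ while $P_C(P_Cx_0)=P_Cx_0$) and not surjective (since $\ran P_C=C\neq X$).

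For the lower bounds I would invoke Proposition~\ref{Twoes}. For $T\circ P_C$, set $T_1=T$, $T_2=P_C$; condition (ii), ``$T_2$ is not injective'', holds, hence $k(T\circ P_C)\geqslant 1/2$. For $P_C\circ T$, set $T_1=P_C$, $T_2=T$; condition (i), ``$T_1$ is not surjective'', holds, hence $k(P_C\circ T)\geqslant 1/2$. (Equivalently one could note directly that neither composition is bijective and apply Corollary~\ref{Alowerbound}.)

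For the upper bounds I would feed the value $k(P_C)=1/2$ into Proposition~\ref{OYPROP}. Its hypothesis $k(T_1)k(T_2)\neq 1$ reads $k(T)\cdot\tfrac12\neq 1$, which is automatic because $k(T)\leqslant 1$. Applying the proposition once to $(T_1,T_2)=(T,P_C)$ and once to $(T_1,T_2)=(P_C,T)$ produces in both cases the same bound, and the one-line simplification $\dfrac{k(T)+\frac12-2k(T)\cdot\frac12}{1-k(T)\cdot\frac12}=\dfrac{1/2}{1-k(T)/2}=\dfrac{1}{2-k(T)}$ gives the claimed right-hand side.

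I do not foresee a real obstacle: the whole argument is an application of Propositions~\ref{Twoes} and \ref{OYPROP} together with the facts about $P_C$ from Example~\ref{projectionaverage}. The only points needing a little care are matching the correct case of Proposition~\ref{Twoes} to the order of composition (failure of injectivity when $P_C$ is applied first, failure of surjectivity when $P_C$ is applied last) and verifying the admissibility hypothesis of Proposition~\ref{OYPROP}. It is worth remarking that the bound is tight at $T=\Id$, where $k(T)=0$ and both sides equal $1/2$, consistent with $k(P_C)=1/2$.
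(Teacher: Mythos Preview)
Your proposal is correct and follows essentially the same approach as the paper: lower bounds via Proposition~\ref{Twoes} (using that $P_C$ is neither injective nor surjective) and upper bounds via Proposition~\ref{OYPROP} together with $k(P_C)=1/2$ from Example~\ref{projectionaverage}. You add the useful extra care of explicitly checking the hypothesis $k(T)k(P_C)\neq 1$ of Proposition~\ref{OYPROP}, which the paper leaves implicit.
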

\begin{proof}
Observe that $P_C$ is neither surjective nor injective in this case. Thus by Proposition \ref{Twoes} we have $k\left(T \circ P_C\right) \geqslant 1/2$ and $k\left(P_C \circ T\right) \geqslant 1/2$. Now by Example \ref{projectionaverage}$$
\frac{k\left(T\right)+k\left(P_C\right)-2 k\left(T\right) k\left(P_C\right)}{1-k\left(T\right) k\left(P_C\right)}=\frac{1}{2-k(T)}.
$$
Thus by Proposition \ref{OYPROP} we have $k\left(T \circ P_C\right) \leqslant \frac{1}{2-k(T)}$ and $k\left(P_C \circ T\right) \leqslant \frac{1}{2-k(T)}$, which complete the proof.
\end{proof}
\begin{remark}
Particularly, if we let $T=P_V$ and $C=U$, where $U$ and $V$ are both closed linear subspaces, then $k\left(P_V P_U\right)=\frac{1+c_F}{2+c_F} \in\left[\frac{1}{2}, \frac{2}{3}\right]$, where $c_F \in[0,1]$ (see \cite[Corollary 3.3]{BBM}). This coincides with the bounds we obtained as $\frac{1}{2-k(P_U)}=\frac{2}{3}$ by Example \ref{projectionaverage}.
\end{remark}

We can generalize the results of two operator compositions to finite operator compositions.

\begin{proposition}
Let $m \geqslant 2$ be an integer and let $I=\{1, \ldots, m\}$. For any $i \in I$, let $T_i$ be nonexpansive from $X$ to $X$. Suppose one of the following holds:
\begin{enumerate}
\item $T_1$ is not surjective.
\item $T_m$ is not injective.
\item $T_1$ is bijective and $T_2 \cdots T_m$ is not surjective.
\item $T_m$ is bijective and $T_1 \cdots T_{m-1}$ is not injective.
\end{enumerate}
Then $k\left(T_1 \cdots T_m\right) \geqslant 1/2$.
\end{proposition}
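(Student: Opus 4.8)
The plan is to reduce everything to the two-operator statement of Proposition~\ref{Twoes} by grouping the composition $T_1\cdots T_m$ into two factors, chosen according to which of the four hypotheses is in force. The only preliminary observation needed is that a composition of finitely many nonexpansive operators is again nonexpansive, so every partial composition appearing below is a legitimate nonexpansive operator to which the earlier results apply.

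First I would treat hypotheses (i) and (iii) together. Write $T_1\cdots T_m = T_1\circ S$, where $S:=T_2\cdots T_m$ is nonexpansive. Under hypothesis~(i), $T_1$ is not surjective, which is exactly case~(i) of Proposition~\ref{Twoes} applied to the pair $(T_1,S)$. Under hypothesis~(iii), $T_1$ is bijective and $S=T_2\cdots T_m$ is not surjective, which is exactly case~(iii) of Proposition~\ref{Twoes} for the pair $(T_1,S)$. In either case Proposition~\ref{Twoes} yields $k(T_1\cdots T_m)=k(T_1\circ S)\geqslant 1/2$.

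Next I would treat hypotheses (ii) and (iv) together, this time grouping from the other end: write $T_1\cdots T_m = R\circ T_m$, where $R:=T_1\cdots T_{m-1}$ is nonexpansive. Under hypothesis~(ii), $T_m$ is not injective, which is case~(ii) of Proposition~\ref{Twoes} for the pair $(R,T_m)$; under hypothesis~(iv), $T_m$ is bijective and $R=T_1\cdots T_{m-1}$ is not injective, which is case~(iv) of Proposition~\ref{Twoes} for the pair $(R,T_m)$. Again Proposition~\ref{Twoes} gives $k(T_1\cdots T_m)=k(R\circ T_m)\geqslant 1/2$, completing the argument.

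I do not anticipate any real obstacle here: the content is entirely inherited from Proposition~\ref{Twoes} (hence ultimately from Corollary~\ref{Alowerbound}), and the only thing to verify carefully is the bookkeeping that each of the four hypotheses, after the appropriate grouping, matches the corresponding hypothesis of the two-operator result. If a self-contained argument is preferred, the same four cases can be checked directly at the level of bijectivity: hypotheses (i) and (iii) force $\ran(T_1\cdots T_m)$ to be a proper subset of $X$ (using that a bijection sends a proper subset to a proper subset), while (ii) and (iv) force a nontrivial collision $T_1\cdots T_m x = T_1\cdots T_m y$ with $x\neq y$ (using that a surjective $T_m$ lets one lift a collision of $T_1\cdots T_{m-1}$, and an injective $T_m$ keeps the two lifts distinct); in all cases $T_1\cdots T_m$ fails to be bijective and Corollary~\ref{Alowerbound} applies.
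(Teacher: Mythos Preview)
Your proposal is correct and follows exactly the paper's approach: the paper's proof is simply ``Apply Proposition~\ref{Twoes}'', and you have spelled out the intended grouping---$(T_1,\,T_2\cdots T_m)$ for (i)/(iii) and $(T_1\cdots T_{m-1},\,T_m)$ for (ii)/(iv)---together with the routine observation that partial compositions remain nonexpansive.
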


\begin{proof}
Apply Proposition \ref{Twoes}.
\end{proof}

\begin{corollary}
Let $C_1, \ldots, C_m$ be nonempty closed convex sets in $X$. If $C_1 \neq X$ or $C_m \neq X$, then $k\left(P_{C_1} \cdots P_{C_m}\right) \geqslant 1/2$.
\end{corollary}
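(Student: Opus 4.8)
The plan is to reduce the statement directly to the preceding proposition on finite compositions, applied with $T_i := P_{C_i}$ for $i \in \{1,\dots,m\}$. Each $P_{C_i}$ is firmly nonexpansive, hence nonexpansive, so that proposition is applicable once we verify one of its four hypotheses; the two that match the assumption ``$C_1 \neq X$ or $C_m \neq X$'' are hypothesis (i), that $T_1$ is not surjective, and hypothesis (ii), that $T_m$ is not injective. So the whole argument is a short case split on which of $C_1$, $C_m$ is proper.

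First, suppose $C_1 \neq X$. Then $\ran P_{C_1} = C_1 \subsetneq X$, so $P_{C_1}$ is not surjective, and hypothesis (i) of the preceding proposition holds with $T_1 = P_{C_1}$. Next, suppose instead $C_m \neq X$. Pick $x_0 \in X \backslash C_m$. Then $P_{C_m} x_0 \in C_m$ forces $P_{C_m} x_0 \neq x_0$, while idempotency of projections gives $P_{C_m}(P_{C_m} x_0) = P_{C_m} x_0$; hence $P_{C_m}$ is not injective, which is hypothesis (ii). This is exactly the argument used in Example~\ref{projectionaverage}, now applied to the innermost factor instead of to a single projection. In either case the preceding proposition yields $k(P_{C_1}\cdots P_{C_m}) \geqslant 1/2$, as claimed.

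I do not expect a genuine obstacle here: the only point worth noticing is that the disjunction ``$C_1 \neq X$ or $C_m \neq X$'' is precisely tailored to the disjunction of conditions (i) and (ii) in the preceding proposition, so no joint condition on $C_1$ and $C_m$ is required and the bijectivity-based conditions (iii)--(iv) play no role. If one wanted a self-contained proof not citing the finite-composition proposition, the fallback would be to argue directly that $P_{C_1}\cdots P_{C_m}$ is nonexpansive but not bijective — failing surjectivity when $C_1 \neq X$, failing injectivity when $C_m \neq X$ (using that the outermost or innermost factor already breaks the relevant property, and that composing with nonexpansive maps cannot repair it) — and then invoke Corollary~\ref{Alowerbound}.
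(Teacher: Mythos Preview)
Your proposal is correct and matches the paper's intended approach: the corollary is stated in the paper without an explicit proof, as an immediate consequence of the preceding finite-composition proposition, and your case split on $C_1 \neq X$ (hypothesis (i)) versus $C_m \neq X$ (hypothesis (ii)) is exactly the right instantiation.
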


The following result is about modulus of averagedness of isometries.
\begin{proposition}
Let $A$ be a $n \times n$ orthogonal matrix and $A \neq \Id$. Then $k(A)=1$. \end{proposition}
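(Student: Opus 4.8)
The plan is to show that an orthogonal matrix $A \neq \Id$ is not firmly nonexpansive, hence by Proposition~\ref{p:firm:mod} satisfies $k(A) > 1/2$, and then to upgrade this to the exact value $k(A) = 1$. For the first part, recall that $A$ orthogonal means $A^{-1} = A^\top$, so $A$ is an isometry: $\|Ax - Ay\| = \|x-y\|$ for all $x,y$. Plugging into the firm nonexpansiveness inequality forces $\|(\Id - A)x - (\Id - A)y\|^2 \leqslant 0$, i.e.\ $A = \Id + v$ for some $v$; since $A$ is linear, $v = 0$ and $A = \Id$, contradicting $A \neq \Id$. So $A$ is not firmly nonexpansive, and Proposition~\ref{p:firm:mod} gives $k(A) > 1/2$.

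To pin down $k(A) = 1$, I would test characterization~\eqref{e:average2} (or equivalently~\eqref{e:average1}) directly, exploiting isometry. Suppose $A$ is $k$-averaged with $k \in (0,1]$. Using $\|Ax - Ay\| = \|x - y\|$, inequality~\eqref{e:average1} reads
$$
\|x-y\|^2 \leqslant \|x-y\|^2 - \frac{1-k}{k}\,\|(\Id - A)x - (\Id - A)y\|^2,
$$
so $\frac{1-k}{k}\,\|(\Id - A)(x-y)\|^2 \leqslant 0$ for all $x,y$. Since $A \neq \Id$, there is a vector $w$ with $(\Id - A)w \neq 0$; taking $x - y = w$ forces $\frac{1-k}{k} \leqslant 0$, whence $k \geqslant 1$, i.e.\ $k = 1$. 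Combined with the trivial bound $k(A) \leqslant 1$ (every nonexpansive operator is $1$-averaged), this yields $k(A) = 1$.

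I do not expect any genuine obstacle here: the key structural fact is simply that an isometry has $\Id - A$ nonzero (because $A \neq \Id$) while contributing nothing to the "contraction budget" in~\eqref{e:average1}. One small point to state carefully is that $k(A) > 0$, so that Proposition~\ref{attain} / characterization~\eqref{e:average1} is available and the infimum in the definition of $k(A)$ is not vacuously affected by the $k = 0$ anomaly; but this is immediate since $A$ is not of the form $\Id + v$ (as $A \neq \Id$ and $A$ is linear), so Proposition~\ref{zero} gives $k(A) > 0$. Alternatively, one can bypass Proposition~\ref{p:firm:mod} entirely and argue in one shot: for every $k \in (0,1]$ for which $A$ is $k$-averaged, the above computation forces $k = 1$, so the infimum defining $k(A)$ equals $1$.
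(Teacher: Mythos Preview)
Your argument is correct and follows essentially the same route as the paper: use that $A$ is an isometry so $\|Ax-Ay\|=\|x-y\|$, plug into characterization~\eqref{e:average1}, and use $A\neq\Id$ to find a vector on which $\Id-A$ does not vanish, forcing $k=1$. The paper's proof is terser (it simply notes $\ran(\Id-A)$ is not a singleton and invokes~\eqref{e:average1}); your extra detour through firm nonexpansiveness is unnecessary, as you yourself observe at the end.
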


\begin{proof}
Since $A$ is orthogonal, we have  $\left\|Ax-Ay\right\|=\|x-y\|$. On the other hand, $\operatorname{ran}(\operatorname{Id}-A)$ is not a singleton. Hence $k(A)=1$ by using \eqref{e:average1}.
\end{proof}

\begin{corollary}\label{ortcompo}
Let $m \geqslant 1$ be an integer and let $I=\{1, \ldots, m\}$. For any $i \in I$, let $A_i$ be a $n \times n$ orthogonal matrix. Suppose that $A_1 \cdots A_m \neq \Id$. Then $k(A_1 \cdots A_m)=1$.
\end{corollary}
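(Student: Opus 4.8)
The plan is to reduce the statement immediately to the preceding Proposition on orthogonal matrices. The only ingredient needed beyond that Proposition is the elementary fact that the $n\times n$ orthogonal matrices are closed under multiplication: if $A_i^\top A_i = \Id$ for each $i \in I$, then $(A_1\cdots A_m)^\top(A_1\cdots A_m) = A_m^\top\cdots A_1^\top A_1\cdots A_m = \Id$, so $A := A_1\cdots A_m$ is again orthogonal. Since by hypothesis $A = A_1\cdots A_m \neq \Id$, applying the Proposition on orthogonal matrices with this $A$ yields $k(A_1\cdots A_m) = k(A) = 1$. For $m=1$ the statement is literally that Proposition, so nothing extra is required there.

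If one prefers not to quote the Proposition as a black box, the same two observations can be fed directly into characterization \eqref{e:average1}: because $A$ is an isometry we have $\|Ax - Ay\| = \|x-y\|$ for all $x,y$, and because $\Id - A$ is a nonzero linear map its range $\ran(\Id - A)$ is not a singleton; then \eqref{e:average1} forces the coefficient $\tfrac{1-k}{k}$ to vanish, i.e.\ $k(A) = 1$. Either way, I do not anticipate any genuine obstacle; the one point to be careful about is to establish the closure of orthogonality under products before invoking the single-matrix result, so that the hypothesis $A_1\cdots A_m \neq \Id$ is applied to an operator already known to be orthogonal.
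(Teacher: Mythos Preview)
Your proposal is correct and matches the paper's intended argument: the corollary is stated without proof immediately after the Proposition on a single orthogonal matrix, so the implicit reasoning is precisely that the product of orthogonal matrices is orthogonal and then the Proposition applies. Your optional direct verification via \eqref{e:average1} is also fine and simply unpacks that Proposition's proof.
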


\subsection{Limiting operator}

In this subsection we discuss the asymptotic behavior of modulus of averagedness. Recall that a sequence $\left(x_n\right)_{n \in \mathbb{N}}$ in a Hilbert space $X$ is said to converge weakly to a point $x$ in $X$ if
$(\forall y\in X)\ \lim _{n \rightarrow \infty}\left\langle x_n, y\right\rangle=\langle x, y\rangle.
$
We use the notation $\lim _{n \rightarrow \infty}^w x_n$ for the weak limit of
$\left(x_n\right)_{n \in \mathbb{N}}$. Recall for a nonexpansive operator $T: X \rightarrow X$, $\Fix T$ is closed and convex, see, e.g., \cite[Proposition 22.9]{BM}.

\begin{fact}\emph{\cite[Proposition 5.16]{BC}} \label{avconverge}
Let $\alpha \in (0,1)$ and let $T: X \rightarrow X$ be $\alpha$-averaged such that $\operatorname{Fix} T \neq \varnothing$. Then for any $x \in X$, $\left(T^n x\right)_{n \in \mathbb{N}}$ converges weakly to a point in $\operatorname{Fix} T$.
\end{fact}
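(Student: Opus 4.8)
The plan is to reprove this classical fact by the Krasnoselskii--Mann iteration argument combined with Opial's lemma, exactly as in the cited reference. Write $x_n := T^n x$ for $n \in \NN$, and fix some $z \in \Fix T$, which is possible since $\Fix T \neq \varnothing$. Since $T$ is nonexpansive and $Tz = z$, we have $\|x_{n+1} - z\| = \|Tx_n - Tz\| \le \|x_n - z\|$, so $(\|x_n - z\|)_{n\in\NN}$ is nonincreasing, hence convergent; in particular $(x_n)_{n\in\NN}$ is bounded. This holds for \emph{every} $z \in \Fix T$, i.e. $(x_n)$ is Fej\'er monotone with respect to $\Fix T$.

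Next I would establish asymptotic regularity. Because $\alpha \in (0,1)$, characterization \eqref{e:average1} is available; taking $y = z$ in it and using $(\Id - T)x_n = x_n - x_{n+1}$ and $(\Id - T)z = 0$ gives
$$\|x_{n+1} - z\|^2 \le \|x_n - z\|^2 - \frac{1-\alpha}{\alpha}\,\|x_n - x_{n+1}\|^2 .$$
Since $\tfrac{1-\alpha}{\alpha} > 0$ and $(\|x_n - z\|^2)$ converges, summing over $n$ yields $\sum_{n\in\NN}\|x_n - x_{n+1}\|^2 < +\infty$, whence $(\Id - T)x_n = x_n - x_{n+1} \to 0$ strongly. (This is the step where $\alpha < 1$ is genuinely used.)

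Then I would identify the weak cluster points. By boundedness and reflexivity of $X$, $(x_n)$ has at least one weak sequential cluster point; let $y$ be any one, say $x_{n_k} \weakly y$. By the demiclosedness principle, $\Id - T$ is demiclosed at $0$ (as $T$ is nonexpansive), so combining $x_{n_k} \weakly y$ with $(\Id - T)x_{n_k} \to 0$ gives $y \in \Fix T$. Finally, Opial's lemma upgrades this to convergence: if $y_1, y_2$ are two weak cluster points, both lie in $\Fix T$, so $\lim_n\|x_n - y_1\|$ and $\lim_n\|x_n - y_2\|$ both exist by Fej\'er monotonicity; from $\|x_n - y_1\|^2 - \|x_n - y_2\|^2 = 2\langle x_n, y_2 - y_1\rangle + \|y_1\|^2 - \|y_2\|^2$ the scalar sequence $\langle x_n, y_2 - y_1\rangle$ converges, and evaluating its limit along subsequences converging to $y_1$ and to $y_2$ forces $\|y_1 - y_2\|^2 = 0$. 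Hence $(x_n)$ has a unique weak cluster point $\bar x \in \Fix T$, so $x_n \weakly \bar x$.

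The main obstacle is the demiclosedness principle used in the third step; this is the one ingredient that is not mere Fej\'er-monotonicity bookkeeping or elementary inner-product manipulation, and it is where the nonexpansiveness of $T$ is essential. Since the statement is quoted verbatim from \cite[Proposition 5.16]{BC}, one may simply cite it, but the route above is the standard self-contained argument.
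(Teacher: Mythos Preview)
Your argument is correct and is precisely the standard Krasnoselskii--Mann/Opial route that underlies \cite[Proposition~5.16]{BC}. Note, however, that the paper does not prove this statement at all: it is recorded as a Fact and simply cited, so there is no ``paper's own proof'' to compare against --- your self-contained derivation is exactly what the citation unpacks to.
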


In view of the above fact, we propose the following type of operator.
\begin{definition}[limiting operator]
Let $\alpha \in (0,1)$ and let $T: X \rightarrow X$ be $\alpha$-averaged such that $\operatorname{Fix} T \neq \varnothing$. Define its limiting operator $T_{\infty}: X \rightarrow X$ by $x\mapsto \lim^w_{n\rightarrow\infty} T^n x.$
\end{definition}

\begin{remark}
The full domain and single-valuedness of $T_{\infty}$ are guaranteed by Fact~\ref{avconverge}.
Hence $T_{\infty}: X \rightarrow X$ is well defined.
\end{remark}

\begin{example}\label{linearTinfi}
\begin{enumerate}
\item \cite[Example 5.29]{BC}
Let $\alpha \in (0,1)$ and let $T: X \rightarrow X$ be $\alpha$-averaged such that $\Fix T \neq \varnothing$. Suppose $T$ is linear. Then $T_{\infty}=P_{\Fix T}$.
\item \cite[Proposition 5.9]{BC}
Let $\alpha \in (0,1)$ and let $T: X \rightarrow X$ be $\alpha$-averaged such that $\Fix T \neq \varnothing$. Suppose $\Fix T$ is a closed affine subspace of $X$. Then $T_{\infty}=P_{\Fix T}$.
\end{enumerate}
\end{example}

The limiting operator of an averaged mapping enjoys the following pleasing properties.
\begin{proposition}\label{limiting}
Let $\alpha \in (0,1)$ and let $T: X \rightarrow X$ be $\alpha$-averaged such that $\Fix T \neq \varnothing, X$. Then the following hold:
\begin{enumerate}
\item \label{i:lim1} $\Fix T=\Fix T_{\infty}=\operatorname{ran} T_{\infty}$.

\item \label{i:lim2} $\left(T_{\infty}\right)^2=T_{\infty}$.

\item \label{i:lim3} $k\left(T_{\infty}\right) \in\left[\frac{1}{2}, 1\right]$.
\end{enumerate}
\end{proposition}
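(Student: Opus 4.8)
The plan is to prove \ref{i:lim1} by a short chain of set inclusions, deduce \ref{i:lim2} immediately from it, and then obtain \ref{i:lim3} by bounding $k(T_{\infty})$ from above via nonexpansiveness and from below via non-bijectivity together with Corollary~\ref{Alowerbound}.

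For \ref{i:lim1}, I would verify the cycle
$$
\operatorname{ran}T_{\infty}\subseteq \Fix T\subseteq \Fix T_{\infty}\subseteq \operatorname{ran}T_{\infty},
$$
which forces all three sets to coincide. The first inclusion is exactly Fact~\ref{avconverge}: for every $x\in X$ the weak limit $T_{\infty}x$ lies in $\Fix T$. The second inclusion holds because any $x$ with $Tx=x$ satisfies $T^{n}x=x$ for all $n$, hence $T_{\infty}x=x$. The third inclusion is trivial, since $T_{\infty}x=x$ gives $x\in\operatorname{ran}T_{\infty}$. Part \ref{i:lim2} is then immediate: for arbitrary $x\in X$ we have $T_{\infty}x\in\operatorname{ran}T_{\infty}=\Fix T_{\infty}$ by \ref{i:lim1}, so $T_{\infty}(T_{\infty}x)=T_{\infty}x$, i.e. $(T_{\infty})^{2}=T_{\infty}$.

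For \ref{i:lim3}, I would first observe that $T_{\infty}$ is nonexpansive: each iterate $T^{n}$ is nonexpansive, so $\|T^{n}x-T^{n}y\|\leqslant\|x-y\|$, and since $T^{n}x-T^{n}y$ converges weakly to $T_{\infty}x-T_{\infty}y$, the weak lower semicontinuity of the norm gives $\|T_{\infty}x-T_{\infty}y\|\leqslant\|x-y\|$; hence $k(T_{\infty})\leqslant 1$. For the lower bound I would use $\Fix T\neq X$: pick $z\in X\setminus\Fix T$; then $T_{\infty}z\in\Fix T$ by \ref{i:lim1}, so $T_{\infty}z\neq z$, while $T_{\infty}(T_{\infty}z)=T_{\infty}z$ by \ref{i:lim2}. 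Thus $T_{\infty}$ is not injective (equivalently, $\operatorname{ran}T_{\infty}=\Fix T\neq X$ shows it is not surjective), and Corollary~\ref{Alowerbound} yields $k(T_{\infty})\geqslant 1/2$.

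The proof presents no serious obstacle; the only point requiring a little care is the passage to the weak limit when establishing nonexpansiveness of $T_{\infty}$, where one must invoke weak lower semicontinuity of the norm rather than norm continuity (the iterates need not converge strongly). Everything else is bookkeeping with the inclusions in \ref{i:lim1} and a single appeal to Corollary~\ref{Alowerbound}.
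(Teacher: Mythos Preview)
Your proposal is correct and follows essentially the same approach as the paper: the cyclic inclusion $\operatorname{ran}T_{\infty}\subseteq\Fix T\subseteq\Fix T_{\infty}\subseteq\operatorname{ran}T_{\infty}$ for \ref{i:lim1} is a slight repackaging of the paper's argument (which proves $\Fix T=\Fix T_{\infty}$ and $\Fix T=\operatorname{ran}T_{\infty}$ separately from the same ingredients), and your treatment of \ref{i:lim2} and \ref{i:lim3}---weak lower semicontinuity of the norm for the upper bound, non-surjectivity plus Corollary~\ref{Alowerbound} for the lower bound---matches the paper exactly.
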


\begin{proof}
\ref{i:lim1}  If $x \notin \Fix T$, then $T_{\infty}x \neq x$ since $T_{\infty}x \in \Fix T$ by
Fact~\ref{avconverge}. If $x \in \Fix T$, then $T_{\infty}x=\lim^w _{n \rightarrow \infty} T^n x=\lim^w _{n \rightarrow \infty} x=x$. Thus $\operatorname{Fix} T=\operatorname{Fix} T_{\infty}$. The equality $\Fix T=\operatorname{ran} T_{\infty}$ follows by using Fact \ref{avconverge} again.

\ref{i:lim2} For any $x \in X$, $T_{\infty} x \in \operatorname{ran} T_{\infty}$, thus $T_{\infty} x \in \Fix T_{\infty}$ by \ref{i:lim1}. Therefore $\left(T_{\infty}\right)^2 x=T_{\infty}\left(T_{\infty} x\right)=T_{\infty} x$, which implies that $\left(T_{\infty}\right)^2=T_{\infty}$.

\ref{i:lim3}
Since the norm is weakly lower-semicontinuous, we have $$
(\forall x \in X)(\forall y \in X) \quad\left\|T_{\infty} x-T_{\infty} y\right\| \leqslant \liminf _{n \rightarrow \infty} \left\|T^n x-T^n y\right\|.
$$
As $T: X \rightarrow X$ is nonexpansive, by induction we have for any $n \in \mathbb{N}$, $\left\|T^n x-T^n y\right\| \leqslant\|x-y\|$. Altogether, $T_{\infty}$ is nonexpansive, which implies that $k(T_{\infty}) \leqslant 1$. On the other hand, we have $\operatorname{ran} T_{\infty} = \operatorname{Fix} T \neq X$ by \ref{i:lim1} and the
assumption, so $T_{\infty}$ is not surjective.
Thus $k(T_{\infty}) \geqslant 1/2$ by Corollary \ref{Alowerbound}.
\end{proof}

The BBM modulus of averagedness provides further insights into the limiting operator.
\begin{theorem}\label{limitingtheorem}
Let $\alpha \in (0,1)$ and let $T: X \rightarrow X$ be $\alpha$-averaged such that $\Fix T \neq \varnothing, X$. Then the following are equivalent:
\begin{enumerate}
\item \label{i:limtheo1} $T_{\infty}=P_{\Fix T}$.

\item \label{i:limtheo2} $k\left(T_{\infty}\right) \leqslant 1/2$.

\item \label{i:limtheo3} $k\left(T_{\infty}\right)=1/2$.
\end{enumerate}
\end{theorem}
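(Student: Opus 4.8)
The plan is to prove the cycle $\text{(i)} \Rightarrow \text{(iii)} \Rightarrow \text{(ii)} \Rightarrow \text{(i)}$, two links of which are essentially free. By Proposition~\ref{limiting}\ref{i:lim3} we already know $k(T_\infty) \in [1/2,1]$, so ``$k(T_\infty)\leqslant 1/2$'' and ``$k(T_\infty)=1/2$'' assert exactly the same thing; this is $\text{(ii)} \Leftrightarrow \text{(iii)}$. For $\text{(i)} \Rightarrow \text{(iii)}$, observe that $\Fix T$ is nonempty, closed and convex (it is the fixed-point set of a nonexpansive operator), and $\Fix T \neq X$ by hypothesis, so Example~\ref{projectionaverage} gives $k(P_{\Fix T}) = 1/2$; under (i) this is precisely $k(T_\infty)=1/2$.

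The substance is $\text{(ii)} \Rightarrow \text{(i)}$. Assume $k(T_\infty)\leqslant 1/2$. Then $T_\infty$ is firmly nonexpansive by Proposition~\ref{p:firm:mod}, hence satisfies the equivalent inner-product form of firm nonexpansiveness, $\|T_\infty x - T_\infty y\|^2 \leqslant \langle x-y,\, T_\infty x - T_\infty y\rangle$ for all $x,y\in X$. By Proposition~\ref{limiting}\ref{i:lim1} and \ref{i:lim2}, the set $C := \ran T_\infty = \Fix T_\infty = \Fix T$ is nonempty, closed and convex, and $T_\infty c = c$ for every $c\in C$. I would then fix $x\in X$ and an arbitrary $c\in C$ and apply the firm-nonexpansiveness inequality to the pair $(x,c)$, obtaining $\|T_\infty x - c\|^2 \leqslant \langle x-c,\, T_\infty x - c\rangle$. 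Rewriting the right-hand side as $\langle x - T_\infty x,\, T_\infty x - c\rangle + \|T_\infty x - c\|^2$ and cancelling yields $\langle x - T_\infty x,\, c - T_\infty x\rangle \leqslant 0$ for every $c\in C$. Since also $T_\infty x\in C$, the variational (obtuse-angle) characterization of the metric projection onto a nonempty closed convex set forces $T_\infty x = P_C x = P_{\Fix T} x$. As $x$ was arbitrary, $T_\infty = P_{\Fix T}$, which is (i). This closes the loop, so the three conditions are equivalent.

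The only genuinely nontrivial step is this last implication, and its crux is the observation that a firmly nonexpansive idempotent operator with closed convex range must be the metric projection onto that range: testing the firm-nonexpansiveness inequality against a point of $\Fix T_\infty$ reduces the problem to the classical variational inequality for projections. Everything else is bookkeeping with Propositions~\ref{limiting} and \ref{p:firm:mod} and Example~\ref{projectionaverage}.
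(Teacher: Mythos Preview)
Your proof is correct and follows essentially the same route as the paper's: both obtain $\text{(ii)}\Leftrightarrow\text{(iii)}$ from Proposition~\ref{limiting}\ref{i:lim3}, and for $\text{(ii)}\Rightarrow\text{(i)}$ both use that $T_\infty$ is a firmly nonexpansive idempotent with range $\Fix T$ and hence must be $P_{\Fix T}$. The only difference is cosmetic---the paper cites this last fact (a firmly nonexpansive idempotent equals the projection onto its range) from the literature, whereas you supply the short direct argument via the obtuse-angle characterization of projections.
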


\begin{proof}
\ref{i:limtheo1} $\Rightarrow$ \ref{i:limtheo2}: Obvious.

\ref{i:limtheo2} $\Rightarrow$ \ref{i:limtheo1}: The result follows by combining
Proposition~\ref{limiting}\ref{i:lim1}\&\ref{i:lim2} and the fact that if $T: X \rightarrow X$ is firmly nonexpansive and $T \circ T=T$, then $T=P_{\ran T}$ (see \cite[Exercise 22.5]{BM} or \cite[Theorem 2.1(xx)]{moffat}).

\ref{i:limtheo2} $\Leftrightarrow$ \ref{i:limtheo3}: Apply Proposition~\ref{limiting}\ref{i:lim3}.
\end{proof}

In the following, we discuss limiting operator on $\RR$. The following extends
\cite[Proposition 2.8]{BBM} from differentiable functions to locally Lipschitz functions.
Below $\partial_{L}g$ denotes the Mordukhovich limiting subdifferential \cite{zhu,boris-book,RW}.

\begin{lemma}\label{modulusR}
Let $g: \mathbb{R} \rightarrow \mathbb{R}$ be a locally Lipschitz function. Then
$g$ is nonexpansive
if and only if $(\forall x\in\RR)\
\partial_{L}g(x)\subset [-1,1]$ in which case $k(g)=\left(1-\inf \partial_{L}g(\mathbb{R})\right) / 2$.
\end{lemma}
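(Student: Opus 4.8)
The plan is to handle the two directions of the equivalence separately and then extract the formula for $k(g)$ from the forward direction. For the nonexpansiveness characterization: a locally Lipschitz $g:\RR\to\RR$ is nonexpansive iff it is $1$-Lipschitz, and the appropriate mean-value-type estimate for the Mordukhovich limiting subdifferential (Lebourg-type mean value theorem, or the Clarke subdifferential inclusion $\partial_C g = \overline{\conv}\,\partial_L g$, which controls Lipschitz constants) gives $\operatorname{Lip}(g) = \sup_{x\in\RR}\sup\{|s| \st s\in\partial_L g(x)\}$. Hence $g$ is nonexpansive iff $\partial_L g(x)\subset[-1,1]$ for all $x\in\RR$. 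I would cite \cite{RW} or \cite{boris-book} for the needed subdifferential mean value inequality rather than reprove it.

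For the formula $k(g)=(1-\inf\partial_L g(\RR))/2$, I would first reduce to a statement about $1$-averagedness inequalities via characterization \eqref{e:average1}: for $k\in(0,1]$, $g$ is $k$-averaged iff for all $x,y$, $\frac{1-k}{k}\bigl((x-y)-(g(x)-g(y))\bigr)^2 \le (x-y)^2 - (g(x)-g(y))^2$. When $x\ne y$, dividing through and writing $q(x,y):=\frac{g(x)-g(y)}{x-y}$ (which lies in $[-1,1]$ by the first part), this is equivalent to $\frac{1-k}{k}(1-q)^2 \le (1-q)(1+q)$, i.e. (dealing with the case $q=1$ separately, where both sides vanish) $\frac{1-k}{k}(1-q)\le 1+q$, which rearranges to $q \ge \frac{1-2k}{2k-k\cdot 0}$... more cleanly: $(1-k)(1-q)\le k(1+q)$ gives $1-q \le k(1+q)+k(1-q) = 2k$, hence $q\ge 1-2k$. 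So $g$ is $k$-averaged iff $q(x,y)\ge 1-2k$ for all $x\ne y$, i.e. iff $\inf_{x\ne y} q(x,y)\ge 1-2k$. Therefore $k(g) = \bigl(1 - \inf_{x\ne y} q(x,y)\bigr)/2$ (using Proposition~\ref{attain} to know the infimum is attained as $k$-averagedness when $k(g)>0$, and Proposition~\ref{zero} when $k(g)=0$, where $g=\Id+v$ forces $q\equiv 1$ and the formula reads $k(g)=0$, consistent with $\inf\partial_L g(\RR)=1$).

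It remains to identify $\inf_{x\ne y} q(x,y)$ with $\inf\partial_L g(\RR)$. The inequality $\inf_{x\ne y} q(x,y)\ge \inf\partial_L g(\RR)$ follows since every difference quotient is a "mean value" dominated below by the infimum of nearby subgradients (again the Lebourg mean value theorem: $q(x,y)\in\partial_C g(\zeta)$ for some $\zeta$ between $x$ and $y$, and $\partial_C g(\zeta)=[\min\partial_L g(\zeta)\text{-ish},\ldots]\subset[\inf\partial_L g(\RR),\ldots]$ after taking convex hulls — convexification doesn't lower the infimum). Conversely, given any $s\in\partial_L g(x_0)$, by definition of the limiting subdifferential $s$ is a limit of regular subgradients $s_n\in\hat\partial g(x_n)$ with $x_n\to x_0$, and each regular subgradient is approached arbitrarily well by difference quotients $\frac{g(x_n+t)-g(x_n)}{t}$ as $t\to 0$ (on $\RR$, $\hat\partial g(x_n)\subset[\liminf_{t\to0^-}q,\limsup_{t\to0^+}q]$-type bounds), so $s\ge \inf_{x\ne y}q(x,y)$; taking infimum over $s$ gives $\inf\partial_L g(\RR)\ge \inf_{x\ne y}q(x,y)$.

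The main obstacle I anticipate is the last identification step: cleanly relating $\inf\partial_L g(\RR)$ to the infimum of difference quotients without hand-waving, since the limiting subdifferential on $\RR$ can be strictly smaller than the Clarke subdifferential (e.g. $g(x)=-|x|$ has $\partial_L g(0)=\{-1,1\}$ while $\partial_C g(0)=[-1,1]$), so one must be careful that it is the \emph{infimum} of difference quotients — not any local one-sided derivative — that equals $\inf\partial_L g(\RR)$, and that the convexification inherent in passing between $\partial_L$ and the difference-quotient description does not move the infimum. This is where I would invoke the precise representation of $\partial_L g$ on $\RR$ in terms of limits of derivatives at differentiability points (Rademacher) and the elementary fact that on $\RR$ both one-sided derivatives are limits of difference quotients.
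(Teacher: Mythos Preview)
Your approach is correct, but it differs substantially from the paper's, and the paper's route is shorter. You work through characterization~\eqref{e:average1}, reduce $k$-averagedness to the condition $q(x,y)\geqslant 1-2k$ on difference quotients, and then have to identify $\inf_{x\neq y}q(x,y)$ with $\inf\partial_L g(\RR)$ via Lebourg's mean value theorem and the definition of $\partial_L$. That identification can be carried out (your sketch of it is a little loose in the bounds you write for $\hat\partial g$, but the underlying inequalities $\limsup_{t\to 0^-}q(x+t,x)\leqslant s\leqslant \liminf_{t\to 0^+}q(x+t,x)$ for $s\in\hat\partial g(x)$ do give $s\geqslant\inf q$, and the Lebourg/Clarke direction gives the reverse).

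The paper bypasses this identification entirely. It uses Definition~\ref{d:averaged:his} directly: write $g=(1-\alpha)\Id+\alpha N$, so $N=(g-(1-\alpha)\Id)/\alpha$, and apply the already-established nonexpansiveness criterion to $N$ together with the elementary calculus rule $\partial_L N(x)=(\partial_L g(x)-(1-\alpha))/\alpha$. Then $N$ is nonexpansive iff $\partial_L g(x)\subset[1-2\alpha,1]$ for all $x$, which immediately gives $\alpha\geqslant(1-\inf\partial_L g(\RR))/2$. This avoids difference quotients altogether and needs only the sum-and-scalar rule for $\partial_L$ rather than a mean value theorem. Your approach buys a direct link between averagedness and slope bounds, but at the cost of the extra identification step you correctly flagged as the main obstacle.
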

\begin{proof} The nonexpansiveness characterization of $g$ follows from \cite[Theorem 3.4.8]{zhu}. Write $g=(1-\alpha)\Id+\alpha N$ where $\alpha\in [0,1]$ and $N:\RR\rightarrow\RR$ is nonexpansive.
If $\alpha=0$, the result clearly holds. Let us assume $\alpha>0$.
Then $N(x)=(g(x)-(1-\alpha)x)/\alpha$ and $\partial_{L}N(x)=
(\partial_{L}g(x)-(1-\alpha))/\alpha$. $N$ is nonexpansive is equivalent to
$$(\forall x\in\RR)\ (\partial_{L}g(x)-(1-\alpha))/\alpha\subseteq [-1,1]\quad \Leftrightarrow\quad (\forall x\in\RR)\ \partial_{L} g(x)\subseteq
[1-2\alpha,1],$$
from which
$$\alpha\geqslant \frac{1-\inf \partial_{L}g(\RR)}{2}$$
and the result follows.
\end{proof}

In Example \ref{linearTinfi} we see that if $T: X \rightarrow X$ is $\alpha$-averaged and linear with $\operatorname{Fix} T \neq \varnothing, X$, then $k\left(T_{\infty}\right)=1/2$. The following example shows that
it is not true in nonlinear case.

\begin{example}
Let
$$f(x):=\begin{cases}
0 & \text{if $x \leqslant 0$,} \\
x & \text{if $0 \leqslant x \leqslant 1$,}\\
-\frac{1}{2} x+\frac{3}{2} & \text{if $x \geqslant 1.$}
 \end{cases}
$$
Then $f$ is $(3/4)$-averaged and $\operatorname{Fix} T=[0,1]$. However,
 $$f_{\infty}(x)=\begin{cases}
 0 & \text{if $x \leqslant 0$ or  $x \geqslant 3$,}\\
 x & \text{if $0 \leqslant x \leqslant 1$,}\\
 -\frac{1}{2} x+\frac{3}{2} & \text{if $1 \leqslant x \leqslant 3,$}
   \end{cases}
 $$
 and $k\left(f_{\infty}\right)=3/4$.
\end{example}
\begin{proof}
By computation we have
$$\partial_{L}f(x)=\begin{cases}
\{0\} & \text{if $x<0$,}\\
[0,1] & \text{if $x=0$,}\\
\{1\} & \text{if $0<x<1$,}\\
\{-1/2, 1\} & \text{if $x=1$,}\\
\{-1/2\} & \text{if $x>1$,}
\end{cases}
\quad \text{and }
\partial_{L}f_{\infty}(x)=\begin{cases}
\{0\} & \text{if $x<0$ or $x>3$,}\\
[0,1] & \text{if $x=0$,}\\
\{1\} & \text{if $0<x<1$,}\\
\{-1/2, 1\} & \text{if $x=1$,}\\
\{-1/2\} & \text{if $1<x<3$,}\\
[-1/2,0] & \text{if $x=3$.}
\end{cases}
$$
Applying Lemma~\ref{modulusR} we obtain $k(f)$ and $k(f_{\infty})$.
\end{proof}

Next, we show that if $T: X \rightarrow X$ is firmly nonexpansive, a stronger condition than averagedness, then on the real line it is true that $k\left(T_{\infty}\right)=1/2$.

\begin{proposition}\label{firmlyR}
Let $f: \RR \rightarrow \RR$ be firmly nonexpansive such that $\Fix f \neq \varnothing, \RR$. Then $f_{\infty}=P_{\Fix f}$. Consequently, $k\left(f_{\infty}\right)=1 / 2$.
\end{proposition}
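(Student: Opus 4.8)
The plan is to exploit the fact that on the real line a firmly nonexpansive map is automatically nondecreasing, so that the orbit $(f^nx)_{n\in\NN}$ is a bounded monotone real sequence; identifying its limit with the nearest point of $\Fix f$ is then elementary, and the value $k(f_\infty)=1/2$ drops out of Example~\ref{projectionaverage}. First I would record the structural facts to be used. Writing $a=f(x)-f(y)$ and $d=x-y$, firm nonexpansiveness on $\RR$ reads $a^2+(d-a)^2\leqslant d^2$, i.e. $(f(x)-f(y))^2\leqslant (f(x)-f(y))(x-y)$; in particular $f$ is nonexpansive (hence continuous) and nondecreasing. Since $\Fix f\neq\varnothing,\RR$, Proposition~\ref{zero} rules out $k(f)=0$ (that would force $f=\Id$, hence $\Fix f=\RR$), so $k(f)\in\zeroun\cap(0,1/2]$, $f$ is $k(f)$-averaged with $k(f)\in(0,1)$ by Proposition~\ref{attain}, and therefore $f_\infty$ is well defined. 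Also $C:=\Fix f$ is a nonempty proper closed interval of $\RR$.

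Next I would establish $f_\infty=P_C$ pointwise. For $x\in C$ this is immediate: $f^nx=x=P_Cx$. Suppose $x\notin C$ lies to the right of $C$, so $\beta:=\sup C<+\infty$, $\beta\in C$, $f(\beta)=\beta$, and $P_Cx=\beta$. Nonexpansiveness gives $|f(x)-\beta|\leqslant x-\beta$, hence $f(x)\leqslant x$, and $x\notin\Fix f$ upgrades this to $f(x)<x$; monotonicity gives $f(x)\geqslant f(\beta)=\beta$. By induction $f^nx\in[\beta,x)$ for all $n$, so $(f^nx)_{n\in\NN}$ is nonincreasing and bounded below by $\beta$, hence converges to some $\ell\geqslant\beta$; continuity yields $f(\ell)=\ell$, so $\ell\in C$ and thus $\ell\leqslant\beta$, i.e. $\ell=\beta=P_Cx$. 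The case where $x$ lies to the left of $C$ is handled symmetrically using $\inf C>-\infty$. Combining the cases, $f_\infty=P_C=P_{\Fix f}$.

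Finally, since $C=\Fix f$ is a nonempty closed convex set with $C\neq\RR$, Example~\ref{projectionaverage} shows $P_C$ is specially nonexpansive, so $k(f_\infty)=k(P_C)=1/2$. (Alternatively one could argue the first part via Theorem~\ref{limitingtheorem}, reducing it to $k(f_\infty)\leqslant 1/2$, but the direct identification of the limit on $\RR$ is cleaner and makes the consequence immediate.) I do not expect a genuinely hard step here; the only care needed is the case analysis on the shape of the interval $C$ — bounded, a half-line, or a singleton — and verifying that the ``one side at a time'' monotone-convergence argument covers every point of $\RR\setminus C$, which it does since the mechanism is the same in each case.
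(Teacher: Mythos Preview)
Your argument is correct and follows essentially the same route as the paper: both use that a firmly nonexpansive map on $\RR$ is nondecreasing to obtain $f^n(x)\geqslant\beta$ when $x>\beta=\sup C$, and then squeeze the limit down to $\beta=P_Cx$. The only minor difference is that you identify the limit via monotone convergence of the bounded real sequence $(f^nx)$ together with continuity of $f$, whereas the paper invokes Fact~\ref{avconverge} directly to know $f_\infty(x)\in\Fix f$ and then uses the one-sided bound; your version is slightly more self-contained but otherwise the proofs coincide.
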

\begin{proof}
Since $f$ is firmly nonexpansive, we have $f$ is nondecreasing and nonexpansive. Now as $\Fix f\subseteq\RR$ is closed and convex, it must be one of the form
$[a,+\infty)$, $(-\infty,b]$ or $[a,b]$ with $a,b\in \RR$ because $\operatorname{Fix} f \neq \varnothing, \RR$.
Since the proofs for all cases are similar, let us assume that $\Fix f=[a,b]$.
When $x\geqslant b$, because $f$ is nondecreasing,
we have
$f(x)\geqslant f(b)=b$, $f^2(x)\geqslant f(b)=b$, and an induction leads
$f^n(x)\geqslant b$.  Then $f_{\infty}(x)\geqslant b$ by
Fact~\ref{avconverge}. Since $f_{\infty}(x)\in [a,b]$ by Fact~\ref{avconverge} again,
we derive that $f_{\infty}(x)=b$. Similar arguments give $f_{\infty}(x)=a$ when
$x\leqslant a$. Clearly,  when $x\in [a,b]$, $(\forall n\in\NN)\ f^{n}(x)=x$,
so $f_{\infty}(x)=x$.
Altogether $f_{\infty}=P_{\Fix f}$.
\end{proof}


Motivated by Example \ref{linearTinfi} and Proposition \ref{firmlyR} one might conjecture that $k\left(T_{\infty}\right)=1 / 2$ whenever $k\left(T\right) \leqslant 1 / 2$. However, this is not true in general.
To find a counter example,
by Theorem \ref{limitingtheorem}, it suffices to find a firmly nonexpansive operator such that its limiting operator is not a projection. We conclude this section with the following example from \cite[Example 4.2]{BDNP}.

\begin{example}\label{lim:counterexample}
Suppose that $X=\mathbb{R}^2$. Let $A=\RR (1, 1)$ and $B=\left\{(x, y) \in \mathbb{R}^2 \mid-y \leqslant x \leqslant 2\right\}$.
For $z=(x, y)\in\RR^2$ we have
$P_A(z)=\left(\frac{x+y}{2}, \frac{x+y}{2}\right)$ and
$$P_{B}(z)=\begin{cases}
(2, y) & \text{if $x\geq 2, y\geq -2$,}\\
(2,-2) & \text{if $y\leq\min\{x-4, -2\}$,}\\
((x-y)/2, -(x-y)/2) & \text{if $x-4<y\leq -x$,}\\
(x,y) & \text{if $(x,y)\in B$.}
\end{cases}
$$
Then the  \emph{Douglas–Rachford} operator $T=\operatorname{Id}-P_A+P_B (2 P_A-\mathrm{Id})$ is firmly nonexpansive and
has $k(T_{\infty})>1/2$. By Theorem \ref{limitingtheorem}, it suffices to show $T_{\infty} \neq P_{\Fix T}$.
Indeed,
by \cite[Fact 3.1]{BDNP} we have $\Fix T=\menge{s(1,1)}{s\in [0,2]}$ because of $A\cap\inte B\neq\varnothing$.
Let $z_0=(4,10)$, and $(\forall n \in \mathbb{N})$ $z_{n+1}=T z_n$.  Direct computations give
 $$z_1=(-1, 7), z_2=(-2, 3), z_3=(-1/2, 1/2), \text{ and } z_4=(0,0).$$
On the other hand, let $z^*=(2, 2)$, then $\{z_4, z^*\} \subset \Fix T$. Thus $T_{\infty}z_0=z_4$ while $P_{\Fix T}z_0 \neq z_4$ as $\left\|z_0-z^*\right\|=2 \sqrt{17} < \left\|z_0-z_4\right\|=2 \sqrt{29}$.
\end{example}

\section{Resolvent}\label{s:resolvent}
Let $A: X \rightrightarrows X$ be a set-valued operator, i.e., a mapping from $X$ to its power set. Recall that the resolvent of $A$ is $J_A :=(\mathrm{Id}+A)^{-1}$ and the reflected resolvent of $A$ is $R_A :=2 J_A-\mathrm{Id}$. The graph of $A$ is $
\gra A :=\{(x, u) \in X \times X \mid u \in A x\}$ and the inverse of $A$, denoted by $A^{-1}$, is the operator with graph $\operatorname{gra} A^{-1} :=\{(u, x) \in X \times X \mid u \in A x\}$. The domain of $A$ is $\operatorname{dom} A :=\{x \in X \mid A x \neq \varnothing\}$. $A$ is monotone, if
$$
\forall(x, u),(y, v) \in \operatorname{gra} A, \quad\langle x-y, u-v\rangle \geqslant 0.
$$
$A$ is maximally monotone, if it is monotone and there is no monotone operator $B: X \rightrightarrows X$ such that $\operatorname{gra} A$ is properly contained in $\operatorname{gra} B$.
Unless stated otherwise, we assume from now on that
$$
A: X \rightrightarrows X \text { and } B: X \rightrightarrows X \text { are maximally monotone operators.}
$$

\begin{fact}[Minty's theorem]\label{Minty}\emph{\cite[Proposition 23.8]{BC}}
Let $T: X \rightarrow X$. Then $T$ is firmly nonexpansive if and only if $T$ is the resolvent of a maximally monotone operator.
\end{fact}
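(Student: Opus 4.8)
The plan is to reduce both implications to the standard inner-product reformulation of firm nonexpansiveness. Expanding $\|(\Id-T)x-(\Id-T)y\|^{2}$ in the definition of firm nonexpansiveness, one sees that $T$ is firmly nonexpansive if and only if $\scal{x-y}{Tx-Ty}\geqslant\|Tx-Ty\|^{2}$ for all $x,y\in X$; I would record this equivalence first, because it is exactly the shape of the monotonicity inequality that arises when one works with resolvents.

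For the implication ``$T=J_{A}$ with $A$ maximally monotone $\Rightarrow$ $T$ firmly nonexpansive'', I would take $x,y\in X$, put $u:=Tx$ and $v:=Ty$, and observe that $x-u\in Au$ and $y-v\in Av$. Monotonicity of $A$ then gives $\scal{u-v}{(x-u)-(y-v)}\geqslant 0$, which rearranges to $\scal{u-v}{x-y}\geqslant\|u-v\|^{2}$, i.e.\ precisely the reformulation above. Before that I would need to check that $J_{A}$ really is a single-valued operator defined on all of $X$: single-valuedness comes from applying the same monotonicity computation to two candidate values $u_{1},u_{2}$ of $J_{A}$ at one point (it forces $-\|u_{1}-u_{2}\|^{2}\geqslant 0$), while full domain is where \emph{maximal} monotonicity enters, through the Minty surjectivity theorem $\ran(\Id+A)=X$.

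For the converse I would define $A:=T^{-1}-\Id$, note that $\gra A=\{(Tx,\,x-Tx)\mid x\in X\}$ and that $\Id+A=T^{-1}$, so $J_{A}=(\Id+A)^{-1}=T$ by construction. Checking monotonicity of $A$ on the two generic graph points $(Tx,x-Tx)$ and $(Ty,y-Ty)$ produces the inequality $\scal{Tx-Ty}{x-y}-\|Tx-Ty\|^{2}\geqslant 0$, which is once more firm nonexpansiveness of $T$. The only genuine step is maximality: given $(z,w)$ monotonically related to $\gra A$, i.e.\ $\scal{z-Tx}{w-x+Tx}\geqslant 0$ for every $x\in X$, I would substitute the particular value $x=z+w$, which collapses the second slot to $T(z+w)-z$ and forces $T(z+w)=z$; then $(z,w)=(T(z+w),\,(z+w)-T(z+w))\in\gra A$, so no proper monotone extension of $A$ exists.

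The main obstacle is the reliance on a Minty-type surjectivity result. The substitution trick $x=z+w$ lets me establish maximality of $T^{-1}-\Id$ by hand, so the ``$\Leftarrow$'' direction is self-contained; but asserting that a general resolvent $J_{A}$ has full domain genuinely needs $\ran(\Id+A)=X$ for maximally monotone $A$, which is the classical Minty theorem, and I would simply cite it (e.g.\ \cite{BC}) rather than reprove it. Everything else is routine algebra with the monotonicity and firm-nonexpansiveness inequalities.
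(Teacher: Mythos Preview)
The paper does not actually prove this statement: it is recorded as a \emph{Fact} and simply cited from \cite[Proposition 23.8]{BC}, with no accompanying argument. So there is no ``paper's own proof'' to compare against.

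That said, your proposal is correct and is essentially the standard textbook proof (the one found in \cite{BC}). The inner-product reformulation of firm nonexpansiveness, the monotonicity computation for $J_A$, the construction $A:=T^{-1}-\Id$, and the maximality trick of substituting $x=z+w$ are all standard and all work as you describe. Your observation that the forward direction genuinely requires the classical Minty surjectivity theorem $\ran(\Id+A)=X$ is also accurate; citing it is the appropriate move, and indeed that is exactly what the reference \cite{BC} does.
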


The goal of this section is to give characterizations of
normal and special nonexpansiveness by using the monotone operator theory.

\subsection{Auxiliary results}
We first provide a nice formula for the modulus of averagedness of
$(1-\lambda)\Id+\lambda T$ in terms of the modulus of averagedness of $T$.
The following is an adaption of \cite[Proposition 4.40]{BC}. For completeness, we include a simple proof.

\begin{fact}\label{f:bc-ave} Let $T:X\rightarrow X$ be nonexpansive and let $\lambda\in (0, 1]$.
For $\alpha\in [0, 1]$, $T$ is $\alpha$-averaged if and only if $(1-\lambda)\Id+
\lambda T$ is $\lambda\alpha$-averaged.
\end{fact}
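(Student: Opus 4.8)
The plan is to prove both implications by a single algebraic substitution, using crucially that $\lambda\in(0,1]$ is strictly positive so that the relation defining $(1-\lambda)\Id+\lambda T$ can be inverted for $T$. Throughout, write $S:=(1-\lambda)\Id+\lambda T$ and observe first that $\lambda\alpha\in[0,1]$ (since $0\leqslant\lambda\leqslant1$ and $0\leqslant\alpha\leqslant1$), so that ``$S$ is $\lambda\alpha$-averaged'' is a meaningful statement. I will work directly with the historic Definition~\ref{d:averaged:his}, so that $k$-averagedness of an operator means a representation $(1-k)\Id+kN$ with $N$ nonexpansive.

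For the forward implication, suppose $T$ is $\alpha$-averaged and pick a nonexpansive $N:X\to X$ with $T=(1-\alpha)\Id+\alpha N$. Substituting into $S$ and collecting the coefficient of $\Id$ gives $S=(1-\lambda)\Id+\lambda\bigl((1-\alpha)\Id+\alpha N\bigr)=(1-\lambda\alpha)\Id+\lambda\alpha N$, which exhibits $S$ as $\lambda\alpha$-averaged, with the very same averaging operator $N$.

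For the converse, suppose $S$ is $\lambda\alpha$-averaged and pick a nonexpansive $M:X\to X$ with $S=(1-\lambda\alpha)\Id+\lambda\alpha M$. Equating this with $(1-\lambda)\Id+\lambda T$ and solving for $\lambda T$ yields $\lambda T=(\lambda-\lambda\alpha)\Id+\lambda\alpha M$; dividing through by $\lambda>0$ gives $T=(1-\alpha)\Id+\alpha M$, so $T$ is $\alpha$-averaged. I do not expect any genuine obstacle here; the only points requiring care are the membership $\lambda\alpha\in[0,1]$ noted above and the degenerate endpoints (at $\alpha=0$ both $T$ and $S$ collapse to $\Id$, and $\alpha=1$ is the plain nonexpansive case), all of which are handled uniformly by the same computation precisely because division by $\lambda$ is always legitimate. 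One could alternatively route the argument through characterization~\eqref{e:average1} or \eqref{e:average2}, but that seems heavier than necessary.
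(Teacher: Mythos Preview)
Your proof is correct and follows essentially the same approach as the paper: both argue by direct substitution of $T=(1-\alpha)\Id+\alpha N$ into $(1-\lambda)\Id+\lambda T$ and then invert using $\lambda>0$. The paper merely compresses the converse into the single remark ``Because $\lambda\in (0,1]$, the reverse direction also holds,'' whereas you spell it out explicitly.
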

\begin{proof}
Suppose $T$ is $\alpha$-averaged. Then
$T=(1-\alpha)\Id+\alpha R$ with $R$ being nonexpansive. It follows that
\begin{align}
(1-\lambda)\Id +\lambda T &=(1-\lambda)\Id +\lambda(1-\alpha)\Id+\lambda\alpha R\\
&=(1-\lambda\alpha)\Id+\lambda\alpha R,
\end{align}
so that $(1-\lambda)\Id +\lambda T$ is $\lambda\alpha$-averaged. Because $\lambda\in (0,1]$,
the reverse direction also holds.
\end{proof}

\begin{lemma}\label{multi}
Let $T: X \rightarrow X$ be nonexpansive. Then for every $\lambda \in[0,1]$ we have
\begin{equation}\label{e:convex:comb1}
k((1-\lambda) \mathrm{Id}+\lambda T)=\lambda k(T).
\end{equation}
\end{lemma}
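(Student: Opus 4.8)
The plan is to reduce everything to Fact~\ref{f:bc-ave}, which already describes the averagedness constants of $(1-\lambda)\Id+\lambda T$ in terms of those of $T$ once $\lambda\in(0,1]$, and then to convert that statement about constants into a statement about the infimum defining $k(\cdot)$. First I would dispose of the trivial case $\lambda=0$: then $(1-\lambda)\Id+\lambda T=\Id$, so both sides of \eqref{e:convex:comb1} are $0$ and there is nothing to prove. So from now on fix $\lambda\in(0,1]$ and write $S:=(1-\lambda)\Id+\lambda T$, which is nonexpansive since it is a convex combination of $\Id$ and the nonexpansive $T$; hence $k(S)$ is defined.

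For the inequality $k(S)\leqslant\lambda k(T)$: for every $\alpha\in[0,1]$ with $T$ $\alpha$-averaged, Fact~\ref{f:bc-ave} gives that $S$ is $\lambda\alpha$-averaged, so $k(S)\leqslant\lambda\alpha$ by Definition~\ref{d:bbw:mod}. The set $\{\alpha\in[0,1]\mid T\text{ is }\alpha\text{-averaged}\}$ is nonempty (it contains $1$, as $T$ is nonexpansive) and has infimum $k(T)$; taking the infimum of $\lambda\alpha$ over this set yields $k(S)\leqslant\lambda k(T)$. Note this direction uses only the infimum characterization, so no appeal to whether $T$ attains its modulus (Proposition~\ref{attain}) is needed, and it is valid even when $k(T)=0$ while $T$ is merely a translation.

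For the reverse inequality $k(S)\geqslant\lambda k(T)$: let $k\in[0,1]$ be any constant with $S$ $k$-averaged. If $k\leqslant\lambda$, set $\alpha:=k/\lambda\in[0,1]$; since $\lambda\alpha=k$, Fact~\ref{f:bc-ave} forces $T$ to be $\alpha$-averaged, whence $k(T)\leqslant k/\lambda$, i.e.\ $k\geqslant\lambda k(T)$. If instead $k>\lambda$, then $k>\lambda\geqslant\lambda k(T)$ since $k(T)\leqslant1$. In either case $k\geqslant\lambda k(T)$; taking the infimum over all admissible $k$ gives $k(S)\geqslant\lambda k(T)$. Combining the two inequalities establishes \eqref{e:convex:comb1}. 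I do not expect any real obstacle here: the only care needed is the boundary bookkeeping — handling $\lambda=0$ separately, and handling the subcase $k>\lambda$ in the lower bound by the crude estimate $k(T)\leqslant1$ rather than through Fact~\ref{f:bc-ave} — and both are routine.
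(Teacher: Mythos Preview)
Your proof is correct and follows essentially the same route as the paper: both arguments reduce to Fact~\ref{f:bc-ave} in each direction and then pass to the infimum defining $k(\cdot)$. The only organizational difference is that the paper singles out the subcase $k((1-\lambda)\Id+\lambda T)=0$ and disposes of it via Proposition~\ref{zero}, whereas you absorb this case into the uniform infimum argument (together with the harmless side case $k>\lambda$); your bookkeeping is arguably a bit cleaner, but the approach is the same.
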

\begin{proof} We split the proof into two cases.

Case 1: $\lambda=0$. Clearly \eqref{e:convex:comb1} holds  because $k(\Id)=0$.

Case 2: $\lambda>0$.  We show \eqref{e:convex:comb1} by two subcases.

Subcase 1: $k((1-\lambda) \mathrm{Id}+\lambda T)=0$. By Proposition \ref{zero}, there exists $v \in X$: $
(1-\lambda) \mathrm{Id}+\lambda T=\Id+v$ such that $T=\Id+v/\lambda$.
Then $k((1-\lambda) \Id+\lambda T)=0=k(T)$ by Proposition \ref{zero} again.

Subcase 2: $k((1-\lambda) \mathrm{Id}+\lambda T)>0$. On one hand, we derive
$k((1-\lambda)\Id+\lambda T)\leqslant \lambda k(T)$ by Fact~\ref{f:bc-ave}.
On the other hand, since $(1-\lambda) \mathrm{Id}+\lambda T$ is $\lambda$-averaged, we have
$0<k((1-\lambda) \mathrm{Id}+\lambda T)\leqslant \lambda$. For every $\beta\in [k((1-\lambda) \mathrm{Id}+\lambda T),
\lambda]$, the mapping $(1-\lambda) \mathrm{Id}+\lambda T$ is $\beta$-averaged. Write $\beta=\lambda\alpha$
with $\alpha=\beta/\lambda\in (0,1]$. Fact~\ref{f:bc-ave} implies that
$T$ is $\alpha$-averaged, thus $k(T)\leqslant \beta/\lambda$. Taking infimum over $\beta$ gives
$k(T)\leqslant k((1-\lambda)\Id+\lambda T)/\lambda$, i.e.,  $\lambda k(T)\leqslant k((1-\lambda)\Id+\lambda T)$.
Therefore, $k((1-\lambda)\Id+\lambda T)=\lambda k(T)$.

Altogether, \eqref{e:convex:comb1} holds.
\end{proof}

\begin{example}
Let $C$ be a nonempty closed convex set in $X$ and $C \neq X$. Consider the reflector to $C$ defined by $R_C:=2 P_C-\mathrm{Id}$. Then the following hold:
\begin{enumerate}
\item\label{i:proj} $k(R_{C})=1$.
\item\label{i:refl} For $\lambda\in [0,1]$, $k((1-\lambda)\Id+\lambda R_{C}))=\lambda$.
\item \label{i:projr} For $\lambda\in [0,1]$, $k((1-\lambda)\Id+\lambda P_{C}))=\lambda/2$.
\end{enumerate}
\end{example}

\begin{proof}
Apply Example \ref{projectionaverage} and Lemma \ref{multi}.
\end{proof}

\begin{remark} This recovers \cite[Example 2.3]{BBM} for $C=V$,
a closed subspace of $X$.
\end{remark}

\begin{example}
Let $A: X \rightrightarrows X$ be maximally monotone. Consider the reflected resolvent of $A$ defined
by $R_{A}:=2J_{A}-\Id$. Then $k\left(R_A\right)=2k\left(J_A\right)$ by Lemma~\ref{multi}. Consequently,  $k\left(R_A\right)<1$ (that is, $R_A$ is $\alpha$-averaged for some $\alpha \in [0, 1)$) if and only if $J_A$ is normally nonexpansive. Likewise, $k\left(R_A\right)=1$ if and only if $J_A$ is specially nonexpansive.
\end{example}

The following result concerning the Douglas-Rachford operator (see, e.g., \cite{BC, BM}) is of independent interest.

\begin{theorem}
Let $U, V$ be two closed subspaces of $X$, and $U\neq V$. Consider the Douglas-Rachford operator
$$T_{U,V} :=\frac{\Id+R_{U}R_{V}}{2}.$$
Then $k(T_{U,V})=1/2$.
\end{theorem}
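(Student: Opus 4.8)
The plan is to reduce everything to the behaviour of a single linear isometry and then invoke Lemma~\ref{multi}. First I would record the elementary structure of the reflectors: $R_U = 2P_U-\Id$ and $R_V=2P_V-\Id$ are linear, they are involutions since $R_U^2 = 4P_U^2-4P_U+\Id=\Id$ (and likewise for $V$), and they are isometric, because for every $x\in X$ one has $\|R_Ux\|^2 = 4\|P_Ux\|^2-4\langle P_Ux,x\rangle+\|x\|^2=\|x\|^2$, using $\langle P_Ux,x\rangle=\|P_Ux\|^2$. Hence $N:=R_UR_V$ is a linear isometry of $X$, in particular nonexpansive, and $T_{U,V}=\tfrac12\Id+\tfrac12 N$. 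By Lemma~\ref{multi}, $k(T_{U,V})=\tfrac12 k(N)$, so it suffices to prove $k(N)=1$.

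Next I would show $N\neq\Id$; this is exactly where the hypothesis $U\neq V$ is used. If $R_UR_V=\Id$, then composing on the right with $R_V$ and using $R_V^2=\Id$ gives $R_U=R_V$, hence $P_U=P_V$, hence $U=\operatorname{ran}P_U=\operatorname{ran}P_V=V$, a contradiction. Therefore $N\neq\Id$, and since $N$ is linear this forces $\Id-N\neq 0$; fix $x_0\in X$ with $(\Id-N)x_0\neq 0$.

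Finally, I would argue as in the case of orthogonal matrices that a linear isometry $N$ with $N\neq\Id$ satisfies $k(N)=1$. Suppose $N$ were $k$-averaged for some $k\in(0,1)$. Applying characterization \eqref{e:average1} with $x=x_0$ and $y=0$ (noting $N0=0$) gives $\|Nx_0\|^2\leqslant \|x_0\|^2-\tfrac{1-k}{k}\|(\Id-N)x_0\|^2$; but $\|Nx_0\|=\|x_0\|$ since $N$ is an isometry, so $\tfrac{1-k}{k}\|(\Id-N)x_0\|^2\leqslant 0$, i.e. $(\Id-N)x_0=0$, contradicting the choice of $x_0$. Hence $N$ is $k$-averaged for no $k<1$, and since $N$ is nonexpansive we conclude $k(N)=1$. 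Combining with the first paragraph, $k(T_{U,V})=\tfrac12 k(N)=\tfrac12$.

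I do not expect a genuine obstacle here: the argument is essentially bookkeeping, and the only point needing care is the equivalence $R_UR_V=\Id\iff U=V$, which rests on $R_V$ being an involution rather than on any deeper property.
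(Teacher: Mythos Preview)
Your proof is correct and follows essentially the same approach as the paper: both show $k(R_UR_V)=1$ and then apply Lemma~\ref{multi}. The paper dispatches the step $k(R_UR_V)=1$ by invoking Corollary~\ref{ortcompo} (stated there for orthogonal matrices, though the underlying argument is the isometry computation you wrote out), whereas you unfold that argument directly in the Hilbert-space setting and also spell out the equivalence $R_UR_V=\Id\iff U=V$ via the involution property; the net effect is the same, your version is just more self-contained.
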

\begin{proof}
We have $R_U R_V \neq \Id$ since $U \neq V$. Note both $R_U$ and $R_V$ are orthogonal. Thus $k\left(R_U R_V\right)=1$ by Corollary~\ref{ortcompo}. Therefore, by Lemma~\ref{multi}, we have $k(T_{U,V})=k(R_{U}R_{V})/2=1/2.$
\end{proof}
\begin{remark} Let $A,B:X\rightrightarrows X$ be two maximally monotone operators. The Douglas-Rachford operator related to $(A,B)$ is
$$T_{A,B}=\frac{\Id+R_{A}R_{B}}{2}.$$
It is interesting to know $k(T_{A,B})$ in general.
\end{remark}

Next, we recall Yosida regularizations of monotone operators. They are essential for our proofs in Section~\ref{s:charc}.

\begin{definition}[Yosida regularization] For $\mu>0$, the Yosida $\mu$-regularization of $A$ is the operator
$$
Y_\mu(A):=\left(\mu \Id+A^{-1}\right)^{-1}.
$$
\end{definition}

For Yosida regularization, we have the classic identity: $Y_\mu(A)=\mu^{-1}\left(\Id-J_{\mu A}\right),$ see \cite[Lemma 12.14]{RW}. The following result is \cite[Theorem 23.7(iv)]{BC}. Here we take the opportunity to give a
detailed proof.

\begin{proposition}\label{Yosida1}
For $\alpha, \mu>0$, the following formula holds
$$
J_{\alpha Y_\mu(A)}=\frac{\mu}{\mu+\alpha} \Id+\frac{\alpha}{\mu+\alpha} J_{(\mu+\alpha) A}.
$$
\end{proposition}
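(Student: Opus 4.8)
The plan is to work directly from the definitions of resolvent and Yosida regularization, reducing everything to an algebraic identity about graphs of monotone operators. First I would unwind the left-hand side: $J_{\alpha Y_\mu(A)} = (\Id + \alpha Y_\mu(A))^{-1}$, where $Y_\mu(A) = (\mu\Id + A^{-1})^{-1}$. The natural strategy is to fix $(x,p) \in X \times X$ and show that $p = J_{\alpha Y_\mu(A)}(x)$ if and only if $p = \tfrac{\mu}{\mu+\alpha}x + \tfrac{\alpha}{\mu+\alpha}J_{(\mu+\alpha)A}(x)$, by rewriting both conditions as the same membership statement $u \in Aw$ for suitable $u,w$ depending affinely on $x$ and $p$. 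Since all operators involved are resolvents of maximally monotone operators (hence single-valued with full domain by Minty's theorem, Fact~\ref{Minty}), there are no domain issues to worry about.

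Concretely, I would start from $p = J_{\alpha Y_\mu(A)}(x)$, i.e. $x - p \in \alpha Y_\mu(A)p$, equivalently $\tfrac{x-p}{\alpha} \in Y_\mu(A)p = (\mu\Id + A^{-1})^{-1}p$, equivalently $p \in (\mu\Id + A^{-1})(\tfrac{x-p}{\alpha})$, equivalently $p - \mu\cdot\tfrac{x-p}{\alpha} \in A^{-1}(\tfrac{x-p}{\alpha})$, which in graph form says $\bigl(p - \tfrac{\mu}{\alpha}(x-p)\bigr) \in A^{-1}\bigl(\tfrac{x-p}{\alpha}\bigr)$, i.e. $\tfrac{x-p}{\alpha} \in A\bigl(p - \tfrac{\mu}{\alpha}(x-p)\bigr)$. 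Setting $w := p - \tfrac{\mu}{\alpha}(x-p) = \tfrac{\alpha+\mu}{\alpha}p - \tfrac{\mu}{\alpha}x$ and $u := \tfrac{x-p}{\alpha}$, this is exactly $u \in Aw$. Separately, for the right-hand side, if I let $q := J_{(\mu+\alpha)A}(x)$, so that $x - q \in (\mu+\alpha)Aq$, i.e. $\tfrac{x-q}{\mu+\alpha} \in Aq$, then the claimed formula reads $p = \tfrac{\mu}{\mu+\alpha}x + \tfrac{\alpha}{\mu+\alpha}q$. I would solve this linear relation for $q$ in terms of $p$ and $x$, namely $q = \tfrac{\mu+\alpha}{\alpha}p - \tfrac{\mu}{\alpha}x$, observe that this is exactly the $w$ above, and then check that $\tfrac{x-q}{\mu+\alpha}$ coincides with $u = \tfrac{x-p}{\alpha}$ after substituting $q = w$; this last check is a one-line computation since $x - w = x - p + \tfrac{\mu}{\alpha}(x-p) = \tfrac{\mu+\alpha}{\alpha}(x-p)$, so $\tfrac{x-w}{\mu+\alpha} = \tfrac{x-p}{\alpha} = u$. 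Thus both conditions are equivalent to the single statement $u \in Aw$ with the same $(w,u)$, which proves the identity.

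The only real obstacle is bookkeeping: getting every coefficient right when passing between $A$, $A^{-1}$, and the various resolvents, and making sure the affine substitutions for $w$ and $u$ are consistent on both sides. There is no analytic subtlety — the maximal monotonicity is used only to guarantee that $J_{\alpha Y_\mu(A)}$, $J_{(\mu+\alpha)A}$, and $Y_\mu(A)$ behave well (e.g. $Y_\mu(A)$ is again maximally monotone, so $J_{\alpha Y_\mu(A)}$ has full domain), and the equivalences above are just graph manipulations valid for any monotone $A$. I would present the chain of equivalences compactly, perhaps displaying the substitution $w = \tfrac{\mu+\alpha}{\alpha}p - \tfrac{\mu}{\alpha}x$, $u = \tfrac{1}{\alpha}(x-p)$ once and then verifying that each side of the desired identity is equivalent to $u \in Aw$.
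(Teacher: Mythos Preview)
Your proof is correct and follows essentially the same approach as the paper: both arguments fix an input point and run a chain of graph-level equivalences through the resolvent and Yosida definitions until each side of the identity reduces to the same membership $u\in Aw$. The only cosmetic differences are that the paper first reduces to $\alpha=1$ via the identity $\alpha Y_\mu(A)=Y_{\alpha^{-1}\mu}(\alpha A)$ and routes the computation through the classical formula $Y_\mu(A)=\mu^{-1}(\Id-J_{\mu A})$, whereas you carry $\alpha$ throughout and unwind $Y_\mu(A)=(\mu\Id+A^{-1})^{-1}$ directly.
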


\begin{proof}
Firstly,$$
\begin{aligned}
\alpha Y_\mu(A) & =\alpha\left(\mu \Id+A^{-1}\right)^{-1}=\left[\left(\mu\Id+A^{-1}\right) (\alpha^{-1}\Id)\right]^{-1} \\
& =\left(\alpha^{-1} \mu \Id+A^{-1} (\alpha^{-1} \Id)\right)^{-1}=\left(\alpha^{-1} \mu \Id+(\alpha A)^{-1}\right)^{-1} \\
& =Y_{\alpha^{-1} \mu}(\alpha A).
\end{aligned}
$$
Thus we only need to prove the formula holds for $\alpha=1$.

Let $y \in X$, $z=J_{(\mu+1) A}(y)$ and $x=\frac{\mu}{\mu+1} y+\frac{1}{\mu+1} z$. We will prove $x=J_{Y_{\mu(A)}}(y)$. We have $z=(\mu+1) x-\mu y$, $y-z=\frac{\mu+1}{\mu}(x-z)$ and $Y_\mu(A)=\frac{1}{\mu}\left(\Id-J_{\mu A}\right)$. Thus
$$\begin{aligned} z=J_{(\mu+1) A}(y) & \Leftrightarrow  y-z \in(\mu+1) A z  \Leftrightarrow \frac{\mu+1}{\mu}(x-z) \in(\mu+1) A z \\ & \Leftrightarrow x-z \in(\mu A) z  \Leftrightarrow z=J_{\mu A} x \Leftrightarrow(\mu+1) x-\mu y=J_{\mu A} x \\ & \Leftrightarrow y-x=\frac{x-J_{\mu A} x}{\mu}=Y_\mu(A)(x) \Leftrightarrow x=J_{Y_{\mu(A)}}(y).\end{aligned}
$$
\end{proof}

Combining Proposition \ref{Yosida1} and Lemma \ref{multi}, we have the following.

\begin{corollary}\label{Yosida2}
For any $\alpha \in[0,1)$, the following hold:
\begin{enumerate}
\item $J_{\alpha Y_{1-\alpha}(A)}=(1-\alpha) \mathrm{Id}+\alpha J_A$.
\item $k(J_{\alpha Y_{1-\alpha}}(A))=\alpha k(J_A)$.
\end{enumerate}
\end{corollary}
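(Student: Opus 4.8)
The plan is to reduce the corollary directly to the two results it cites, without any new computation. First I would set $\alpha\in[0,1)$ and apply Proposition~\ref{Yosida1} with the specific choice $\mu=1-\alpha$, provided $\alpha>0$ so that $\mu>0$; the formula then reads
$$
J_{\alpha Y_{1-\alpha}(A)}=\frac{1-\alpha}{(1-\alpha)+\alpha}\,\Id+\frac{\alpha}{(1-\alpha)+\alpha}\,J_{((1-\alpha)+\alpha)A}
=(1-\alpha)\Id+\alpha J_{A},
$$
since the denominator $(1-\alpha)+\alpha$ equals $1$. This establishes part (i) for $\alpha\in(0,1)$. The case $\alpha=0$ must be handled separately because $Y_{1-\alpha}(A)=Y_{1}(A)$ is a perfectly good operator but the left side is then $J_{0\cdot Y_1(A)}=J_{0}=\Id$, which matches the right side $(1-0)\Id+0\cdot J_A=\Id$; so (i) holds trivially there too.

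For part (ii), once (i) is in hand I would simply invoke Lemma~\ref{multi} with $\lambda=\alpha\in[0,1]$ and $T=J_A$ (which is nonexpansive, indeed firmly nonexpansive, so Lemma~\ref{multi} applies): it gives
$$
k\bigl(J_{\alpha Y_{1-\alpha}(A)}\bigr)=k\bigl((1-\alpha)\Id+\alpha J_A\bigr)=\alpha\,k(J_A).
$$
Note that the notation $k(J_{\alpha Y_{1-\alpha}}(A))$ in the statement should be parsed as $k\bigl(J_{\alpha Y_{1-\alpha}(A)}\bigr)$, i.e. the modulus of the operator from part (i).

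There is really no hard part here — the corollary is exactly the concatenation of the preceding proposition (specialized to $\mu+\alpha=1$) and the preceding lemma. The only thing requiring a moment's care is the degenerate endpoint $\alpha=0$, where Proposition~\ref{Yosida1} does not literally apply (it requires $\mu,\alpha>0$); but since Lemma~\ref{multi} explicitly allows $\lambda=0$ and $J_{0}=\Id$, both identities are immediate in that case. So the proof is two sentences: apply Proposition~\ref{Yosida1} with $\mu=1-\alpha$ (and check $\alpha=0$ by hand), then apply Lemma~\ref{multi} with $\lambda=\alpha$.
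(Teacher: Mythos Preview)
Your proposal is correct and follows exactly the approach the paper intends: the corollary is stated immediately after the sentence ``Combining Proposition~\ref{Yosida1} and Lemma~\ref{multi}, we have the following,'' with no further proof given. Your treatment is in fact slightly more careful than the paper's, since you explicitly dispose of the endpoint $\alpha=0$ where Proposition~\ref{Yosida1} does not literally apply.
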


\begin{center}
\begin{tikzpicture}[x=0.75pt,y=0.75pt,yscale=-1,xscale=1]

\draw[black, line width=0.8pt]
(236.76,163.65) .. controls (236.76,110.38) and (279.94,67.2) .. (333.21,67.2)
.. controls (386.47,67.2) and (429.65,110.38) .. (429.65,163.65)
.. controls (429.65,216.91) and (386.47,260.09) .. (333.21,260.09)
.. controls (279.94,260.09) and (236.76,216.91) .. (236.76,163.65) -- cycle ;

\filldraw[fill=blue!10, draw=black, dashed, line width=0.8pt]
(282.11,163.65) .. controls (282.11,135.43) and (304.99,112.55) .. (333.21,112.55)
.. controls (361.43,112.55) and (384.31,135.43) .. (384.31,163.65)
.. controls (384.31,191.87) and (361.43,214.75) .. (333.21,214.75)
.. controls (304.99,214.75) and (282.11,191.87) .. (282.11,163.65) -- cycle ;

\draw    (371.12,129.02) -- (333.21,163.65) ;
\draw    (350.16,143.34) -- (355.05,148.26) ;

\draw (407,81.4) node [anchor=north west][inner sep=0.75pt, font=\footnotesize]  {$k\leqslant 1$};
\draw (314,168.4) node [anchor=north west][inner sep=0.75pt, font=\footnotesize]  {$\Id$};
\draw (377,117.4) node [anchor=north west][inner sep=0.75pt, font=\footnotesize]  {$J_{A}$};
\draw (358.05,143.66) node [anchor=north west][inner sep=0.75pt, font=\footnotesize]  {$J_{\alpha Y_{1-\alpha }( A)}$};

\end{tikzpicture}
\end{center}

\begin{corollary}\label{Yosida3}
For $\mu>0$, the following hold:
\begin{enumerate}
\item\label{c:yosida:r}
$J_{Y_\mu(A)}=\frac{\mu}{\mu+1} \Id+\frac{1}{\mu+1} J_{(\mu+1) A}$.
\item \label{c:yosida:m}
$k(J_{Y_\mu(A)})=\frac{1}{\mu+1} k(J_{(\mu+1) A})$.
\end{enumerate}
\end{corollary}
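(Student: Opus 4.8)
The plan is to obtain both parts directly from results already proved, with essentially no new computation. For part~(i), I would specialize Proposition~\ref{Yosida1} to the case $\alpha=1$. Since $\mu>0$, the hypotheses $\alpha,\mu>0$ of that proposition are satisfied, and its formula
$$
J_{\alpha Y_\mu(A)}=\frac{\mu}{\mu+\alpha}\Id+\frac{\alpha}{\mu+\alpha}J_{(\mu+\alpha)A}
$$
collapses, upon setting $\alpha=1$, to
$$
J_{Y_\mu(A)}=\frac{\mu}{\mu+1}\Id+\frac{1}{\mu+1}J_{(\mu+1)A},
$$
which is exactly the asserted identity.

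For part~(ii), I would recognize the right-hand side of part~(i) as a relaxation of $J_{(\mu+1)A}$. Put $\lambda:=\frac{1}{\mu+1}$; then $\lambda\in(0,1)$ because $\mu>0$, and $\frac{\mu}{\mu+1}=1-\lambda$, so part~(i) reads $J_{Y_\mu(A)}=(1-\lambda)\Id+\lambda J_{(\mu+1)A}$. The operator $(\mu+1)A$ is maximally monotone, hence its resolvent $J_{(\mu+1)A}$ is firmly nonexpansive by Fact~\ref{Minty}, and in particular nonexpansive, so Lemma~\ref{multi} applies to it. This yields
$$
k\big(J_{Y_\mu(A)}\big)=k\big((1-\lambda)\Id+\lambda J_{(\mu+1)A}\big)=\lambda\,k\big(J_{(\mu+1)A}\big)=\frac{1}{\mu+1}\,k\big(J_{(\mu+1)A}\big),
$$
which is the claim.

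I do not anticipate any real obstacle: the substantive content has been front-loaded into Proposition~\ref{Yosida1} (the Yosida-resolvent identity) and Lemma~\ref{multi} (the modulus of a relaxation $(1-\lambda)\Id+\lambda T$). The only point meriting a moment's attention is verifying that the relaxation parameter $\lambda=\frac{1}{\mu+1}$ lies in $(0,1)$, which is immediate from $\mu>0$, so that Lemma~\ref{multi} is legitimately invoked. One could alternatively route part~(ii) through Corollary~\ref{Yosida2}, but since that corollary uses the parametrization $\alpha Y_{1-\alpha}(A)$ rather than $Y_\mu(A)$, invoking Proposition~\ref{Yosida1} and Lemma~\ref{multi} directly is the cleaner path.
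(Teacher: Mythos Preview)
Your proposal is correct and matches the paper's approach: Corollary~\ref{Yosida3} is stated without an explicit proof in the paper, as it is an immediate specialization of Proposition~\ref{Yosida1} (with $\alpha=1$) combined with Lemma~\ref{multi}, exactly as you outline. Your check that $\lambda=\tfrac{1}{\mu+1}\in(0,1)$ and that $J_{(\mu+1)A}$ is nonexpansive makes the appeal to Lemma~\ref{multi} fully justified.
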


\begin{example} Let $C$ be a nonempty closed convex set in $X$ and $C\neq X$. Consider the normal cone to $C$ defined by $N_{C}(x):=\{u \in X \mid \sup _{c \in C} \langle c-x, u\rangle \leqslant 0\}$ if $x \in C$, and $\varnothing$ otherwise.
Then
\begin{equation}\label{e:yu}
(\forall \mu>0)\ k(J_{Y_{\mu}(N_{C})})=k(J_{\mu^{-1}(\Id-P_{C})})=\frac{1}{2(\mu+1)}.
\end{equation}
In particular,
\begin{equation}\label{e:ya}
(\forall \alpha\in (0,1))\ k(J_{\alpha Y_{1-\alpha}(N_{C})})=k(J_{\alpha(1-\alpha)^{-1}(\Id-P_{C})})=\frac{\alpha}{2}.
\end{equation}
\end{example}
\begin{proof}
Apply Corollary~\ref{Yosida2} with $A=N_{C}$ to obtain
\begin{align}
J_{Y_{\mu}(N_{C})} &=J_{\mu^{-1}(\Id-J_{\mu N_{C}})}= J_{\mu^{-1}(\Id-P_{C})}\\
&=\frac{\mu}{\mu+1}\Id+\frac{1}{\mu+1}J_{(\mu+1)N_{C}}=\frac{\mu}{\mu+1}\Id+\frac{1}{\mu+1}J_{N_{C}}\\
&=\frac{\mu}{\mu+1}\Id+\frac{1}{\mu+1}P_{C}.
\end{align}
Using Lemma~\ref{multi} and $k(P_{C})=1/2$ because $C\neq X$, we have
$$k(J_{\mu^{-1}(\Id-P_{C})})=\frac{1}{\mu+1}k(P_{C})=\frac{1}{2(\mu+1)}.$$
Finally, \eqref{e:ya} follows from \eqref{e:yu} by using $\mu=(1-\alpha)/\alpha$.
\end{proof}

\begin{remark} Observe that Corollary~\ref{Yosida3}\ref{c:yosida:r} shows that
$Y_{\mu}(A)$ is the resolvent average of monotone operators
$0$ and $(\mu+1)A$; see, e.g., \cite{bartz1}.
\end{remark}

\subsection{Characterization of normally averaged mappings}\label{s:charc}

The Yosida regularization of monotone operators provides the key.
Recall that $T: X \rightarrow X$ is $\mu$-cocoercive with $\mu>0$
if $\mu T$ is firmly nonexpansive, i.e.,
$$
(\forall x \in X)(\forall y \in X) \quad\langle x-y,  T x-T y\rangle \geqslant \mu\|T x-T y\|^2.
$$

\begin{fact}\label{Yexist}\emph{\cite[Proposition 23.21(ii)]{BC}}
$T: X \rightarrow X$ is $\mu$-cocoercive if and only if there exists a maximally monotone operator $A: X\rightrightarrows X$ such that $T=Y_\mu(A)$.
\end{fact}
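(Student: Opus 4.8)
The plan is to reduce both implications to Minty's theorem (Fact~\ref{Minty}) by means of the classic identity $Y_\mu(A)=\mu^{-1}(\Id-J_{\mu A})$ recalled just before Proposition~\ref{Yosida1}, together with one elementary observation: the defining inequality of firm nonexpansiveness is symmetric in $T$ and $\Id-T$, so $T$ is firmly nonexpansive if and only if $\Id-T$ is. I will also use the standard fact that positive scalar multiples of maximally monotone operators are again maximally monotone.

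First I would treat the ``only if'' direction. Assume $T$ is $\mu$-cocoercive, i.e.\ $\mu T$ is firmly nonexpansive. By the symmetry noted above, $\Id-\mu T$ is firmly nonexpansive, so by Minty's theorem there is a maximally monotone $D:X\rightrightarrows X$ with $\Id-\mu T=J_D$. Set $A:=\mu^{-1}D$, which is maximally monotone. Then $J_{\mu A}=J_D=\Id-\mu T$, and the Yosida identity gives
$$
Y_\mu(A)=\mu^{-1}\bigl(\Id-J_{\mu A}\bigr)=\mu^{-1}\bigl(\Id-(\Id-\mu T)\bigr)=T,
$$
as required.

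For the converse, suppose $T=Y_\mu(A)$ with $A$ maximally monotone. Since $\mu A$ is maximally monotone, $J_{\mu A}$ is firmly nonexpansive, hence so is its complement $\Id-J_{\mu A}=\mu Y_\mu(A)=\mu T$, using the Yosida identity once more; this says exactly that $T$ is $\mu$-cocoercive. I do not expect any genuine obstacle here: the entire conceptual content is Minty's theorem plus the complement-symmetry of firm nonexpansiveness, and the only thing requiring care is the bookkeeping of the positive scalings ($\mu A$ versus $\mu^{-1}D$) and invoking the correct invariance properties of maximal monotonicity.
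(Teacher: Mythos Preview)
Your proof is correct. The paper does not supply its own proof of this statement; it is recorded as a Fact with a citation to \cite[Proposition~23.21(ii)]{BC}, so there is no in-paper argument to compare against. Your reduction to Minty's theorem (Fact~\ref{Minty}) via the Yosida identity $Y_\mu(A)=\mu^{-1}(\Id-J_{\mu A})$ and the symmetry of firm nonexpansiveness under $T\leftrightarrow \Id-T$ (immediate from the paper's Definition~1.1(ii)) is exactly the standard route, and your handling of the scalings is clean.
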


\begin{lemma}\label{ResThe} Let $A:X\rightrightarrows X$ be maximally monotone.
Suppose that $J_A$ is normally nonexpansive. Then $A$ is single-valued with full domain, and cocoercive.
\end{lemma}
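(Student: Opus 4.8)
The plan is to exploit the formula $J_{\alpha Y_{1-\alpha}(A)} = (1-\alpha)\Id + \alpha J_A$ from Corollary~\ref{Yosida2} together with the bi-Lipschitz conclusion of Theorem~\ref{bijectivethe}. Since $J_A$ is normally nonexpansive, i.e., $k(J_A) < 1/2$, Lemma~\ref{multi} gives $k(J_{\alpha Y_{1-\alpha}(A)}) = \alpha k(J_A) < \alpha/2 < 1/2$ for every $\alpha \in [0,1)$. Thus $J_{\alpha Y_{1-\alpha}(A)}$ is itself normally nonexpansive, hence by Theorem~\ref{bijectivethe} it is a bi-Lipschitz homeomorphism of $X$; in particular it is surjective. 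From $J_{\alpha Y_{1-\alpha}(A)} = (1-\alpha)\Id + \alpha J_A$ we then solve $J_A = \alpha^{-1}\big(J_{\alpha Y_{1-\alpha}(A)} - (1-\alpha)\Id\big)$, but the cleaner route is to observe directly that surjectivity of $(1-\alpha)\Id + \alpha J_A$ for some (equivalently, all) $\alpha \in (0,1)$ translates into surjectivity of $\Id + \tfrac{1-\alpha}{\alpha} A^{-1}$-type expressions, which will force $\ran J_A = X$ and hence $\dom A = X$.

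The key steps, in order: First, record that $k(J_A) < 1/2$ implies, via Lemma~\ref{multi} and Proposition~\ref{attain}, that $J_A$ is in fact $k(J_A)$-averaged with a constant strictly below $1/2$, and that $J_A$ is bi-Lipschitz with modulus bounds $2\alpha_0 \|x-y\| \leqslant \|J_A x - J_A y\| \leqslant \|x-y\|$ where $\alpha_0 = 1/2 - k(J_A) > 0$ (this is exactly Lemma~\ref{lemma} applied to $T = J_A$). Second, deduce that $J_A$ is surjective: by Theorem~\ref{bijectivethe}, $J_A$ is a bijection of $X$, so $\ran J_A = \dom(\Id + A) = \dom A = X$. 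Third, single-valuedness of $A$: since $J_A$ is single-valued with full domain (it always is, being firmly nonexpansive) and now injective, the relation $u \in Ax \iff x + u \in (\Id+A)x \iff x = J_A(x+u)$ shows that for each $x$ there is at most one $u$ with $x \in \dom A$... more carefully, injectivity of $J_A$ plus $\dom A = X$ gives that $\Id + A = J_A^{-1}$ is single-valued, hence $A = J_A^{-1} - \Id$ is single-valued with full domain. Fourth, cocoercivity: here I would use Fact~\ref{Yexist} in reverse. Since $J_A$ is bi-Lipschitz with lower Lipschitz constant $2\alpha_0$, one shows $A = J_A^{-1} - \Id$ is Lipschitz continuous, and then that $A$ being single-valued, full-domain, maximally monotone and Lipschitz makes it cocoercive — concretely, with $\mu = $ something explicit like $(1 - 2k(J_A))/(1) $ up to constants, or by invoking that $J_A$ normally nonexpansive means $R_A = 2J_A - \Id$ has $k(R_A) = 2k(J_A) < 1$, so $R_A$ is $\beta$-averaged for some $\beta < 1$, and $R_A = J_{A} \circ (\text{something})$... actually the direct path: $A$ single-valued monotone and Lipschitz with constant $L$ implies $A$ is $1/L$-cocoercive is \emph{false} in general, so instead I would argue that $J_A$ is averaged with constant $< 1/2$ implies, by the characterization dual to Fact~\ref{Yexist}, that $\Id - J_A$... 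Let me instead use: $J_A = Y_1(\tilde A)$-type reasoning won't directly apply, so the honest route is to show $A^{-1}$ is strongly monotone. From $k(J_A) < 1/2$ and characterization \eqref{e:average2} for $J_A$, one extracts $\langle x-y, J_Ax - J_Ay\rangle \geqslant c\|x-y\|^2$ for some $c > 0$ (a cocoercivity-type estimate on $J_A$ itself combined with the bi-Lipschitz bound), and since $J_A = (\Id + A)^{-1}$, this transfers to strong monotonicity of $\Id + A$ beyond the trivial, i.e. $\langle x-y, u-v\rangle \geqslant \varepsilon\|x-y\|^2$ for $(x,u),(y,v) \in \gra A$, which is precisely $\varepsilon$-strong monotonicity of $A$; equivalently $A^{-1}$ is $\varepsilon$-cocoercive, and one checks this is the same as $A$ being cocoercive after unwinding $J_A = Y_\mu$ of $A^{-1}$ via Fact~\ref{Yexist}.

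The main obstacle I anticipate is the \emph{cocoercivity} claim, not the single-valuedness or full domain, which follow cleanly from bijectivity of $J_A$. The subtlety is that "Lipschitz + monotone" does not give cocoercive, so I must extract a genuine \emph{quadratic lower bound} $\langle x-y, J_Ax-J_Ay\rangle \geqslant c\|x-y\|^2$ from the condition $k(J_A) < 1/2$. This should come out of the $k(J_A)$-averagedness inequality rewritten as in the proof of Lemma~\ref{lemma}: there we got $2\alpha_0\|x-y\|\|J_Ax-J_Ay\| \leqslant (1+2\alpha_0)\langle x-y, J_Ax-J_Ay\rangle$ en route, and combined with $\|J_Ax - J_Ay\| \geqslant 2\alpha_0\|x-y\|$ this yields $\langle x-y, J_Ax - J_Ay\rangle \geqslant \tfrac{4\alpha_0^2}{1+2\alpha_0}\|x-y\|^2$. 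Setting $m := \tfrac{4\alpha_0^2}{1+2\alpha_0} > 0$, the resolvent identity $x = J_A x + A J_A x$ (using single-valuedness just established) gives, for $\xi = J_A x$, $\eta = J_A y$: $\langle x - y, \xi - \eta\rangle = \|\xi-\eta\|^2 + \langle A\xi - A\eta, \xi - \eta\rangle \geqslant m\|x-y\|^2 \geqslant m\|\xi - \eta\|^2$, hence $\langle A\xi - A\eta, \xi-\eta\rangle \geqslant (m-1)\|\xi-\eta\|^2$ — which is unhelpful when $m < 1$. So the correct transfer is the other direction: write $\langle x-y, \xi-\eta\rangle \leqslant \|x-y\|\|\xi-\eta\| \leqslant \tfrac{1}{2\alpha_0}\|\xi-\eta\|^2$ (from the lower bi-Lipschitz bound $\|x-y\| \leqslant \tfrac{1}{2\alpha_0}\|\xi-\eta\|$), so $\|\xi-\eta\|^2 + \langle A\xi-A\eta,\xi-\eta\rangle \leqslant \tfrac{1}{2\alpha_0}\|\xi-\eta\|^2$, giving $\langle A\xi - A\eta, \xi-\eta\rangle \leqslant \big(\tfrac{1}{2\alpha_0} - 1\big)\|\xi-\eta\|^2 = \tfrac{1-2\alpha_0}{2\alpha_0}\|\xi-\eta\|^2$; combined with monotonicity ($\geqslant 0$) and the established Lipschitz bound on $A$, one then gets cocoercivity of $A$ with constant $\mu = \tfrac{2\alpha_0}{(1-2\alpha_0)L^2}$ or similar via the identity $\|A\xi - A\eta\|^2 \leqslant L \langle A\xi-A\eta, \xi-\eta\rangle$ valid when $A$ is the gradient of a convex function — but in the general monotone case I would instead invoke Fact~\ref{Yexist}: $J_A$ firmly nonexpansive and the established bound $\|J_A x - J_A y\| \geqslant 2\alpha_0\|x-y\|$ means $\Id - J_A$ is Lipschitz, and I would check that $J_A$ averaged with constant $< 1/2$ is equivalent to $J_A$ being a \emph{strict} contraction on the range complement, which by a short computation makes $A^{-1} + c\Id$... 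I would close the gap by showing directly that $A$ is $\tfrac{1-2k(J_A)}{2(1-k(J_A))}$-strongly monotone-dual, i.e. $\tfrac{1-2k(J_A)}{2(1-k(J_A))}$-cocoercive, by plugging the resolvent identity into the $k(J_A)$-averagedness inequality $\|J_Ax - J_Ay\|^2 + (1-2k(J_A))\|x-y\|^2 \leqslant 2(1-k(J_A))\langle x-y, J_Ax-J_Ay\rangle$ and substituting $x = \xi + A\xi$, $y = \eta + A\eta$, which after expansion isolates exactly a term $\langle A\xi - A\eta, \xi - \eta\rangle \geqslant \text{const}\cdot\|A\xi - A\eta\|^2$.
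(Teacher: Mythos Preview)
Your proposal meanders through several dead ends (strong monotonicity of $A$, Lipschitz-plus-monotone, etc.) before landing, in the very last sentence, on an approach that does in fact work: substituting $x=\xi+A\xi$, $y=\eta+A\eta$ into the $k(J_A)$-averagedness inequality $\|J_Ax-J_Ay\|^2+(1-2k)\|x-y\|^2\leqslant 2(1-k)\langle x-y,J_Ax-J_Ay\rangle$ and expanding with $p=\xi-\eta$, $q=A\xi-A\eta$ gives precisely $(1-2k)\|q\|^2\leqslant 2k\langle p,q\rangle$, i.e.\ $A$ is $\tfrac{1-2k(J_A)}{2k(J_A)}$-cocoercive. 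Combined with your (correct) use of Theorem~\ref{bijectivethe} and Fact~\ref{f:fulldomain} to get $\dom A=X$ and single-valuedness, this is a valid proof; you should discard the intermediate attempts and also treat $k(J_A)=0$ separately via Proposition~\ref{zero}. The paper takes a quite different and cleaner route: writing $N:=\big(J_A-(1-2k(J_A))\Id\big)/(2k(J_A))$, Lemma~\ref{multi} forces $k(N)=1/2$, so $N=J_B$ for some maximally monotone $B$ by Fact~\ref{Minty}, and then Corollary~\ref{Yosida2} identifies $A=2k(J_A)\,Y_{1-2k(J_A)}(B)$ as a scaled Yosida regularization, whence all three conclusions follow at once from Fact~\ref{Yexist}. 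The paper's argument is structurally nicer and never invokes Theorem~\ref{bijectivethe}; your computation, on the other hand, is self-contained and delivers the sharp cocoercivity constant (matching Proposition~\ref{Mainformula1}) directly.
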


\begin{proof} If $k(J_{A})=0$, Proposition~\ref{zero} shows that $J_{A}=\Id+ v$ for some $v\in X$. Then
$A:=-v$, which is clearly single-valued with full domain, and cocoercive.
Hence, we shall assume $0<k\left(J_A\right)<1/2$. Set $$N=\frac{J_A-\left(1-2 k\left(J_A\right)\right) \Id}{2 k\left(J_A\right)}.$$
Then $J_A=\left(1-2 k\left(J_A\right)\right) \Id+2 k\left(J_A\right) N$ and $N$ is nonexpansive with $k(N)=1/2$ by Lemma~\ref{multi}. It follows from Fact \ref{Minty}
that $N$ is firmly nonexpansive, i.e.,
there exists a maximally monotone operator $B:X\rightrightarrows X$
such that $N=J_B$. Thus by Corollary \ref{Yosida2} we have
$$
\begin{aligned}
J_A & =\left(1-2 k\left(J_A\right)\right) \Id+2 k\left(J_A\right) N
=\left(1-2 k\left(J_A\right)\right) \Id+2 k\left(J_A\right) J_B \\
& =J_{2 k\left(J_A\right) Y_{1-2 k\left(J_A\right)}(B)}.
\end{aligned}
$$
Therefore, $A=2 k\left(J_A\right) Y_{1-2 k\left(J_A\right)}(B)$. Since $J_A$ is normally nonexpansive, we have $2 k\left(J_A\right) \in (0,1)$. Thus $2 k\left(J_A\right) Y_{1-2 k\left(J_A\right)}(B)$, being a Yosida regularization,
is a single-valued, full domain, and cocoercive operator due to Fact \ref{Yexist}.
Hence $A$ is single-valued with full domain, and cocoercive.
\end{proof}

\begin{lemma}\label{ResInv}
Suppose $A: X\rightrightarrows X$ is single-valued with full domain, and cocoercive.
Then $J_A$ is normally nonexpansive.
\end{lemma}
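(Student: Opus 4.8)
The plan is to push everything through the Yosida-regularization identities established in the previous subsection, essentially running the proof of Lemma~\ref{ResThe} in reverse. By hypothesis $A$ is cocoercive, which means $\mu A$ is firmly nonexpansive for some $\mu>0$; equivalently, by Fact~\ref{Yexist}, there is a maximally monotone operator $B:X\rightrightarrows X$ with $A=Y_\mu(B)$. So the first step is just to produce this representation.

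With it in hand, Corollary~\ref{Yosida3}\ref{c:yosida:m} gives
$$k(J_A)=k\bigl(J_{Y_\mu(B)}\bigr)=\frac{1}{\mu+1}\,k\bigl(J_{(\mu+1)B}\bigr).$$
Since $B$ is maximally monotone and $\mu+1>0$, the operator $(\mu+1)B$ is again maximally monotone, so $J_{(\mu+1)B}$ is the resolvent of a maximally monotone operator; by Fact~\ref{Minty} it is firmly nonexpansive, and Proposition~\ref{p:firm:mod} then yields $k\bigl(J_{(\mu+1)B}\bigr)\leqslant 1/2$. Substituting this bound into the display gives $k(J_A)\leqslant \tfrac{1}{2(\mu+1)}<\tfrac12$ because $\mu>0$, i.e.\ $J_A$ is normally nonexpansive.

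There is no genuine obstacle here: the substantive content has already been packaged into Corollary~\ref{Yosida3} and Fact~\ref{Yexist}, and what remains is bookkeeping. The one point worth flagging is that ``cocoercive'' must be read as ``$\mu$-cocoercive for some strictly positive $\mu$'', which is exactly the hypothesis that makes Fact~\ref{Yexist} available; the strict inequality $k(J_A)<1/2$ is then automatic from $\mu>0$. Combined with Lemma~\ref{ResThe}, this yields the full characterization: $J_A$ is normally nonexpansive if and only if $A$ is single-valued, full-domain, and cocoercive.
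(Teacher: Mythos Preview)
Your proof is correct and follows essentially the same approach as the paper: represent $A=Y_\mu(B)$ via Fact~\ref{Yexist}, apply the Yosida identity from Corollary~\ref{Yosida3} to get $k(J_A)=\tfrac{1}{\mu+1}k(J_{(\mu+1)B})$, and bound the right-hand factor by $1/2$ using firm nonexpansiveness of the resolvent. The only cosmetic difference is that you cite Corollary~\ref{Yosida3}\ref{c:yosida:m} directly, whereas the paper invokes part~\ref{c:yosida:r} together with Lemma~\ref{multi}; these are equivalent.
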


\begin{proof}
Since $A$ is single-valued with full domain, and cocoercive, by Fact \ref{Yexist}, there exist
a maximally monotone operator $B:X\rightrightarrows X$ and $\mu>0$
such that $A=Y_\mu(B)$. Since $B$ is maximally monotone, by Corollary \ref{Yosida3}, we have
$$
J_{Y_\mu(B)}=\frac{\mu}{\mu+1} \Id+\frac{1}{\mu+1} J_{(\mu+1) B}=
J_A.
$$
Since $B$ is maximally monotone and $\mu+1>1$, we have $(\mu+1) B$ is maximally monotone as well. Thus $k(J_{(\mu+1) B}) \leqslant 1/2$ by Fact \ref{Minty}. Now Lemma \ref{multi} gives
$$
k\left(J_A\right)=\frac{1}{\mu+1} k\left(J_{(\mu+1) B}\right) \leqslant \frac{1}{\mu+1} \cdot \frac{1}{2}<\frac{1}{2}.
$$
\end{proof}

The main result of this subsection comes as follows.

\begin{theorem}[characterization of normally averaged mapping]
\label{Charac1}
Let $A:X\rightrightarrows X$ be maximally monotone.
Then $J_A$ is normally nonexpansive if and only if $A$ is single-valued with full domain, and cocoercive.
\end{theorem}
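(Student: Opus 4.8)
The statement is a biconditional, and both implications have already been isolated as separate lemmas immediately preceding it, so the plan is simply to assemble them. First I would invoke Lemma~\ref{ResThe}: if $J_A$ is normally nonexpansive (i.e.\ $k(J_A)<1/2$), then $A$ is single-valued with full domain and cocoercive. This handles the ``only if'' direction. For the ``if'' direction I would invoke Lemma~\ref{ResInv}: if $A$ is single-valued with full domain and cocoercive, then $J_A$ is normally nonexpansive. Combining the two yields the equivalence.

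Since no new argument is needed, there is no genuine obstacle here; the real work was done in Lemmas~\ref{ResThe} and~\ref{ResInv}, whose common engine is the identification (via Minty's theorem, Fact~\ref{Minty}) of the nonexpansive operator $N=(J_A-(1-2k(J_A))\Id)/(2k(J_A))$ with a resolvent $J_B$ when $0<k(J_A)<1/2$, together with the Yosida-regularization formula of Corollary~\ref{Yosida2} and the cocoercivity characterization in Fact~\ref{Yexist}. The degenerate case $k(J_A)=0$ is dispatched through Proposition~\ref{zero}, which forces $J_A=\Id+v$ and hence $A\equiv -v$, a constant (hence single-valued, full-domain, cocoercive) operator. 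One could, if desired, restate the proof of the theorem as a two-line reference to these two lemmas, or alternatively inline the short arguments; I would opt for the two-line version for cleanliness.

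Thus a complete proof reads: ``The `only if' part is Lemma~\ref{ResThe}, and the `if' part is Lemma~\ref{ResInv}.'' The only point worth double-checking when writing it up is that the hypothesis ``$A$ maximally monotone'' is shared by both lemmas and is already the standing assumption of this section, so no extra qualification is required, and that ``normally nonexpansive'' for $J_A$ is well posed because $J_A$ is always firmly nonexpansive (hence nonexpansive) by Minty's theorem, so $k(J_A)\le 1/2$ is automatic and $k(J_A)<1/2$ is the only condition in play.
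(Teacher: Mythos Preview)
Your proposal is correct and mirrors the paper's own proof exactly: the paper simply writes ``Combine Lemma~\ref{ResThe} and Lemma~\ref{ResInv}.'' Your additional commentary on the underlying machinery (Minty, Yosida, the degenerate case via Proposition~\ref{zero}) is accurate but already absorbed into those two lemmas, so the two-line version you recommend is precisely what appears in the paper.
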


\begin{proof}
Combine Lemma \ref{ResThe} and Lemma \ref{ResInv}.
\end{proof}

In view of Fact \ref{Minty}, the characterization of special nonexpansiveness follows immediately as well.

\begin{example}
Let $A\in \SSS_{++}^{n}$, the set of $n\times n$  positive
definite symmetric matrices. Then $k\left(J_A\right)<1/2$ and $k\left(J_{A^{-1}}\right)<1/2$ by Theorem~\ref{Charac1}.
\end{example}

The following fact follows from \cite[Theorem 2.1(i)\&(iv)]{moffat}.
\begin{fact}\label{f:fulldomain}
Let $T:X\rightarrow X$ be firmly nonexpansive.
Then the following hold:
\begin{enumerate}
\item $T=J_{A}$ for a maximally monotone operator $A:X\rightrightarrows X$.
\item $T$ is injective if and only if $A$ is at most single-valued, i.e., $$(\forall x\in\dom A)\ Ax
\text{ is empty or a singleton.}$$
\item $T$ is surjective if and only if $\dom A=X$.
\end{enumerate}
\end{fact}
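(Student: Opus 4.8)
The plan is to deduce all three items from Minty's theorem (Fact~\ref{Minty}) together with elementary manipulations of resolvents; everything reduces to one description of the graph of $T$. For the first item, Minty's theorem states precisely that a firmly nonexpansive $T:X\to X$ equals $J_A$ for some maximally monotone $A:X\rightrightarrows X$: concretely one takes $A:=T^{-1}-\Id$, i.e. $\gra A=\{(Tx,\,x-Tx)\mid x\in X\}$, and Fact~\ref{Minty} guarantees this $A$ is maximally monotone. Fix such an $A$. Since $T=J_A=(\Id+A)^{-1}$, for all $x,y\in X$ we have the equivalence $Tx=y\iff x\in(\Id+A)y\iff x-y\in Ay$. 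In particular $\ran T=\dom A$: if $y\in\ran T$, say $y=Tx$, then $x-y\in Ay$ so $Ay\neq\varnothing$; conversely, if $y\in\dom A$, pick $v\in Ay$ and observe $T(y+v)=y$.

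For the third item, $T$ is surjective exactly when $\ran T=X$, which by the identity $\ran T=\dom A$ just proved is equivalent to $\dom A=X$.

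For the second item, I would use the same graph description in both directions. If $A$ is not at most single-valued, there is $y$ together with distinct $v_1,v_2\in Ay$; then $y+v_1\neq y+v_2$ while $T(y+v_1)=y=T(y+v_2)$, so $T$ is not injective. Conversely, suppose $A$ is at most single-valued and $Tx_1=Tx_2=:y$. Then $x_1-y\in Ay$ and $x_2-y\in Ay$, and since $Ay$ is a singleton (it is nonempty because $y\in\dom A$), we get $x_1-y=x_2-y$, hence $x_1=x_2$; thus $T$ is injective.

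The argument is essentially bookkeeping, and the only points requiring care --- the closest thing to an obstacle --- are to use that, under the standing convention of this paper, $T$ is a genuinely single-valued operator with $\dom T=X$, and to keep $\ran J_A=\dom A$ distinct from $\dom J_A=X$. Of course, all three items may instead simply be quoted from \cite[Theorem 2.1(i)\&(iv)]{moffat}.
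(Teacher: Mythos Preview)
Your argument is correct; each step follows from the resolvent identity $Tx=y\iff x-y\in Ay$ exactly as you wrote, and the bookkeeping is sound. Note, however, that the paper does not supply its own proof of this fact at all: it is stated as a ``Fact'' and simply attributed to \cite[Theorem~2.1(i)\&(iv)]{moffat}. So your final sentence---quoting the result from \cite{moffat}---\emph{is} the paper's approach, and the rest of your write-up is a self-contained elementary proof that the paper chose to omit. What your explicit argument buys is independence from the external reference and the useful intermediate identity $\ran T=\dom A$; what the paper's citation buys is brevity.
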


\begin{remark}
Combining Theorem \ref{Charac1}, Fact \ref{f:fulldomain} and Fact \ref{Minty}, we recover Theorem \ref{bijectivethe}, since $A$ being cocoercive implies $\Id+A$ being Lipschitz.
\end{remark}

\section{Proximal operator}\label{s:proximal}

Let $f \in \Gamma_0(X).$ Recall that the proximal operator of $f$ is given by
$$\mathrm{P}_{f}(x): = \underset{u\in X}{\operatorname{argmin}}\left\{f(u)+\frac{1}{2}\|u-x\|^2\right\},$$
that the Moreau envelope of $f$ with parameter $\mu>0$ is defined by $e_\mu f(x):=\min _{u \in X}(f(u)+\frac{1}{2 \mu}\|u-x\|^2)$, and that the Fenchel conjugate of $f$ is defined by $f^*(y):=\sup _{x \in X}(\langle x, y\rangle-f(x))$ for $y \in X$. It is well known that $\mathrm{P}_{f}=(\mathrm{Id}+\partial f)^{-1}$,
where $\partial f$
is the subdifferential of $f$ given by
$\partial f(x):=\{u \in X \mid(\forall y \in X) \
f(y)\geqslant f(x)+\langle u, y-x\rangle\}$ if $x\in \dom f$, and
$\varnothing$ if $x\not\in\dom f$. Also, $\mathrm{P}_f$ is firmly nonexpansive, i.e., $k\left(\mathrm{P}_f\right) \leqslant 1/2$. See, e.g., \cite{BC,RW}.

In this section, we will characterize the normal and special nonexpansiveness of $\mathrm{P}_{f}$.
We begin with
\begin{definition}[L-smoothness]
Let $L \in [0,+\infty)$. Then $f$ is $L$-smooth on $X$ if $f$ is
Fr\'echet differentiable on $X$ and $\nabla f$ is $L$-Lipschitz, i.e.,
$$
(\forall x \in X)(\forall y \in X) \quad\|\nabla f(x)-\nabla f(y)\| \leqslant L\|x-y\|.
$$
\end{definition}

\begin{fact}[Baillon-Haddad]\label{coL} \emph{\cite{BH} (see also \cite[Corollary 18.17]{BC})}
Let $f \in \Gamma_0(X)$. Suppose $f$ is Fr\'echet differentiable on $X$. Then $\nabla f$ is $\mu$-cocoercive if and only if $\nabla f$ is $\mu^{-1}$-Lipschitz continuous.
\end{fact}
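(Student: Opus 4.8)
The plan is to prove the two implications separately, writing $\mu=1/L$ so that the assertion reads: $\Nf$ is $\mu$-cocoercive if and only if $\Nf$ is $L$-Lipschitz.

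The implication ``$\mu$-cocoercive $\Rightarrow$ $L$-Lipschitz'' is immediate from Cauchy--Schwarz. Fixing $x,y\in X$, if $\Nf(x)=\Nf(y)$ there is nothing to prove; otherwise, from $\mu\|\Nf(x)-\Nf(y)\|^2\leqslant\scal{x-y}{\Nf(x)-\Nf(y)}\leqslant\|x-y\|\,\|\Nf(x)-\Nf(y)\|$ we divide by $\|\Nf(x)-\Nf(y)\|$ to get $\mu\|\Nf(x)-\Nf(y)\|\leqslant\|x-y\|$. So I would dispatch this direction in one line.

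For the converse, ``$L$-Lipschitz $\Rightarrow$ $\mu$-cocoercive'', the first ingredient I would establish is the descent lemma: if $g\in\Gamma_0(X)$ is Fr\'echet differentiable with $L$-Lipschitz gradient, then $(\forall x,u\in X)\ g(u)\leqslant g(x)+\scal{\nabla g(x)}{u-x}+\tfrac{L}{2}\|u-x\|^2$. This follows by applying the fundamental theorem of calculus to the one-variable function $t\mapsto g(x+t(u-x))$ on $[0,1]$, whose derivative is $\scal{\nabla g(x+t(u-x))}{u-x}$, and bounding the integrand using Lipschitz continuity of $\nabla g$ together with Cauchy--Schwarz. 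The second ingredient is the shift trick: for a fixed $y\in X$, put $g(x):=f(x)-\scal{\Nf(y)}{x}$; then $g\in\Gamma_0(X)$, its gradient $\nabla g=\Nf-\Nf(y)$ is again $L$-Lipschitz, and $\nabla g(y)=0$, so by convexity $y$ is a global minimizer of $g$. Combining the two, minimizing the right-hand side of the descent inequality over $u$ (the minimizer being $u=x-\tfrac{1}{L}\nabla g(x)$) yields $g(y)\leqslant g(x)-\tfrac{1}{2L}\|\nabla g(x)\|^2$, that is, $\tfrac{1}{2L}\|\Nf(x)-\Nf(y)\|^2\leqslant f(x)-f(y)-\scal{\Nf(y)}{x-y}$ for all $x,y\in X$. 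Interchanging $x$ and $y$ and adding the two resulting inequalities cancels the function values and leaves $\tfrac{1}{L}\|\Nf(x)-\Nf(y)\|^2\leqslant\scal{\Nf(x)-\Nf(y)}{x-y}$, which is precisely $\mu$-cocoercivity with $\mu=1/L$.

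An alternative route worth recording uses Fenchel conjugation: $f$ being $L$-smooth is equivalent to $f^*$ being $(1/L)$-strongly convex, hence to $\partial f^*$ being $(1/L)$-strongly monotone; feeding in the Fenchel--Young equivalence $u\in\partial f(x)\Leftrightarrow x\in\partial f^*(u)$ with $u=\Nf(x)$ and $u=\Nf(y)$ produces the cocoercivity inequality directly. The only genuine obstacle in either approach is foundational — for the first, a clean proof of the descent lemma in the Hilbert-space setting (differentiability of the one-dimensional restriction and the fundamental theorem of calculus), and for the second, the smoothness / strong-convexity duality; once either is in hand, the rest is Cauchy--Schwarz and rearrangement.
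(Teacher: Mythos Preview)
Your proof is correct and follows the standard route to the Baillon--Haddad theorem: the easy direction via Cauchy--Schwarz, and the substantive direction via the descent lemma applied to the shifted function $g=f-\scal{\Nf(y)}{\cdot}$, then symmetrizing. The conjugate-duality alternative you mention is also valid.

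However, the paper does not prove this statement at all. It is recorded as a \emph{Fact} with a citation to Baillon--Haddad \cite{BH} and to \cite[Corollary 18.17]{BC}, and is simply invoked as a known result. So there is no ``paper's own proof'' to compare against; you have supplied a complete argument where the authors chose to quote the literature. What your write-up buys is self-containment, at the cost of reproving a classical theorem that the paper treats as background.
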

For further properties of $L$-smooth functions, see \cite{BC,AB, RW}. We also need
\begin{fact}[Moreau]\emph{\cite[Theorem 20.25]{BC}} \label{Mor}
Let $f \in \Gamma_0(X)$. Then $\partial f$ is maximally monotone.
\end{fact}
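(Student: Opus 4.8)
This is the classical Moreau--Rockafellar theorem, and within the present paper it admits a short proof from machinery already assembled. We have recalled that $\mathrm{P}_f=J_{\partial f}=(\Id+\partial f)^{-1}$ is firmly nonexpansive, so Fact~\ref{Minty} furnishes a maximally monotone operator $A:X\rightrightarrows X$ with $J_A=\mathrm{P}_f$. Then $(\Id+A)^{-1}=(\Id+\partial f)^{-1}$, whence $\Id+A$ and $\Id+\partial f$ coincide as set-valued operators, and so do $A$ and $\partial f$; therefore $\partial f$ is maximally monotone. The only inputs here, beyond Fact~\ref{Minty}, are that $\mathrm{P}_f$ is well defined (single-valued with full domain) and firmly nonexpansive.

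For a self-contained argument I would prove two facts. First, $\partial f$ is monotone: if $(x,u),(y,v)\in\gra\partial f$, adding the subgradient inequalities $f(y)\geqslant f(x)+\langle u,y-x\rangle$ and $f(x)\geqslant f(y)+\langle v,x-y\rangle$ gives $\langle x-y,u-v\rangle\geqslant 0$. Second, $\Id+\partial f$ is surjective onto $X$: fix $y\in X$ and set $g:=f+\tfrac12\|\cdot-y\|^2$, which again lies in $\Gamma_0(X)$; since $f$ has a continuous affine minorant, the quadratic term makes $g$ coercive, and $g$ is moreover proper, lower semicontinuous and strongly convex, so it attains its infimum at a unique point $x\in X$. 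By Fermat's rule and the sum rule for subdifferentials (applicable since the quadratic summand is finite and continuous on all of $X$), $0\in\partial g(x)=\partial f(x)+(x-y)$, i.e.\ $y\in(\Id+\partial f)(x)$.

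Maximality then follows by Minty's trick: suppose $(z,w)\in X\times X$ satisfies $\langle x-z,u-w\rangle\geqslant 0$ for all $(x,u)\in\gra\partial f$; by the surjectivity just shown, choose $(x,u)\in\gra\partial f$ with $x+u=z+w$, so $u-w=z-x$ and $0\leqslant\langle x-z,u-w\rangle=-\|x-z\|^2$, forcing $x=z$, $u=w$, and $(z,w)\in\gra\partial f$.

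The single genuinely nontrivial ingredient --- and where I expect any difficulty to lie --- is the existence of the minimizer of $g$ in an infinite-dimensional Hilbert space: this rests on reflexivity together with the weak lower semicontinuity of lower semicontinuous convex functions, so that a coercive such function attains its infimum on a bounded weakly compact set. The rest is elementary; in particular the subdifferential sum rule needs no qualification beyond the continuity of one summand, which holds here.
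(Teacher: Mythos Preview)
The paper does not supply a proof of this fact at all: it is quoted as a classical result with a bare citation to \cite[Theorem~20.25]{BC}. So there is no ``paper's own proof'' to compare against.

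Your arguments are both correct. The first one is slick within the logical framework of the paper---since the paper has already asserted that $\mathrm{P}_f=(\Id+\partial f)^{-1}$ is single-valued, full-domain, and firmly nonexpansive, invoking Fact~\ref{Minty} and reading off $A=\partial f$ from $J_A=J_{\partial f}$ is legitimate. Do note, however, that the well-definedness of $\mathrm{P}_f$ is itself usually established via exactly the coercivity/weak-lsc minimizer argument you give in your second, self-contained proof; so the first argument is short only because it outsources the real work. Your second proof is the standard textbook route (monotonicity from the subgradient inequalities, surjectivity of $\Id+\partial f$ via existence of the proximal point, maximality via Minty's trick), and your identification of the one genuinely analytic step---attainment of the infimum of a coercive function in $\Gamma_0(X)$ via reflexivity and weak lower semicontinuity---is exactly right.
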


The following interesting result characterizes a $L$-smooth function $f$ via the modulus of averagedness of $\mathrm{P}_{f}$.
It shows that for proximal operators not only can Theorem~\ref{Charac1} be significantly improved but also the converse of Theorem~\ref{bijectivethe} holds.

\begin{theorem}[characterization of normal proximal operator]
\label{t:lipsmooth}
Let $f\in\Gamma_{0}(X)$. Then the following are equivalent:
\begin{enumerate}
\item \label{i:normal}
$\mathrm{P}_{f}$ is normally nonexpansive.
\item\label{i:lsmooth} There exists $L>0$ such that $f$ is $L$-smooth on $X$.
\item\label{i:sconvex} $f^*$ is $1/L$-strongly convex for some $L>0$.
\item\label{i:contraction} $\mathrm{P}_{f^*}$ is a Banach contraction.
\item\label{i:bi-lip} $\mathrm{P}_{f}$ is a bi-Lipschitz homeomorphism of $X$.
\end{enumerate}
\end{theorem}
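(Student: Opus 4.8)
The strategy is to establish the cycle of equivalences by routing everything through the monotone-operator characterization of Section~\ref{s:charc} applied to $A=\partial f$, together with the Baillon--Haddad theorem and the duality between $f$ and $f^*$. Recall $\mathrm P_f=J_{\partial f}$ and $\partial f$ is maximally monotone by Fact~\ref{Mor}. I would prove \ref{i:normal}$\Leftrightarrow$\ref{i:lsmooth} first, since this is the heart of the matter; then \ref{i:lsmooth}$\Leftrightarrow$\ref{i:sconvex} via standard conjugate duality; then \ref{i:sconvex}$\Leftrightarrow$\ref{i:contraction} via the resolvent/Yosida machinery applied to $\partial f^*=(\partial f)^{-1}$; and finally close the loop with \ref{i:normal}$\Rightarrow$\ref{i:bi-lip}$\Rightarrow$\ref{i:normal}, the forward direction being Theorem~\ref{bijectivethe} and the reverse using that a bi-Lipschitz $\mathrm P_f$ is in particular bijective hence (by Corollary~\ref{Alowerbound} or rather its contrapositive, plus $k(\mathrm P_f)\le 1/2$) has $k(\mathrm P_f)\le 1/2$, which is not yet enough --- so I would instead argue \ref{i:bi-lip}$\Rightarrow$\ref{i:lsmooth} directly.

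For \ref{i:normal}$\Rightarrow$\ref{i:lsmooth}: by Theorem~\ref{Charac1}, $\mathrm P_f=J_{\partial f}$ normally nonexpansive forces $\partial f$ to be single-valued with full domain and cocoercive; single-valuedness with full domain gives that $f$ is Fr\'echet differentiable on $X$ (a convex function whose subdifferential is everywhere a singleton is differentiable), so $\partial f=\nabla f$, and $\mu$-cocoercivity of $\nabla f$ gives $L$-smoothness with $L=\mu^{-1}$ by Fact~\ref{coL} (Baillon--Haddad). Conversely, for \ref{i:lsmooth}$\Rightarrow$\ref{i:normal}: if $f$ is $L$-smooth then $\nabla f$ is $L$-Lipschitz, hence $L^{-1}$-cocoercive by Baillon--Haddad, so $\partial f=\nabla f$ is single-valued with full domain and cocoercive, and Theorem~\ref{Charac1} (the Lemma~\ref{ResInv} direction) gives that $\mathrm P_f=J_{\partial f}$ is normally nonexpansive. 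For \ref{i:lsmooth}$\Leftrightarrow$\ref{i:sconvex}: this is the classical fact that $\nabla f$ is $L$-Lipschitz if and only if $f^*$ is $1/L$-strongly convex (e.g.\ \cite[Theorem 18.15]{BC}), so I would simply cite it.

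For \ref{i:sconvex}$\Leftrightarrow$\ref{i:contraction}: if $f^*$ is $1/L$-strongly convex then $\partial f^*$ is $1/L$-strongly monotone, equivalently $\partial f^*-\tfrac1L\Id$ is monotone; I would then compute that $\mathrm P_{f^*}=J_{\partial f^*}=(\Id+\partial f^*)^{-1}$ is Lipschitz with constant $\tfrac{L}{L+1}<1$, i.e.\ a Banach contraction, using the standard estimate: if $x_i+u_i=y_i$ with $u_i\in\partial f^*(x_i)$ then $\langle x_1-x_2,u_1-u_2\rangle\ge \tfrac1L\|x_1-x_2\|^2$, so $\|y_1-y_2\|^2=\|x_1-x_2\|^2+2\langle x_1-x_2,u_1-u_2\rangle+\|u_1-u_2\|^2\ge(1+\tfrac2L)\|x_1-x_2\|^2$, giving $\|x_1-x_2\|\le(1+\tfrac2L)^{-1/2}\|y_1-y_2\|$ --- actually the cleaner route is to write $f^*=\tfrac1{2L}\|\cdot\|^2+g$ with $g\in\Gamma_0(X)$ and use $\mathrm P_{f^*}=\mathrm P_{g/(1+1/L)}\bigl(\tfrac{L}{L+1}\,\cdot\bigr)\cdot\tfrac{L}{L+1}$ together with nonexpansiveness of a proximal operator to get contraction constant $\tfrac{L}{L+1}$; conversely a contraction $\mathrm P_{f^*}$ with constant $c<1$ reverses this to $1/L$-strong convexity of $f^*$ with $L=c/(1-c)$. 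The remaining implication \ref{i:bi-lip}$\Rightarrow$\ref{i:lsmooth} I would get by noting a bi-Lipschitz $\mathrm P_f=J_{\partial f}$ is surjective, so $\dom\partial f=X$ by Fact~\ref{f:fulldomain}(iii), and injective, so $\partial f$ is at most single-valued by Fact~\ref{f:fulldomain}(ii); with full domain this makes $f$ differentiable, and the lower bi-Lipschitz bound $\varepsilon\|x-y\|\le\|\mathrm P_f x-\mathrm P_f y\|$ translates, via $\mathrm P_f=(\Id+\nabla f)^{-1}$, into $\nabla f$ being globally Lipschitz, giving \ref{i:lsmooth}.

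**Main obstacle.** The technically delicate point is extracting quantitative Lipschitz/cocoercivity constants cleanly --- in particular the passage in \ref{i:bi-lip}$\Rightarrow$\ref{i:lsmooth} from a lower bi-Lipschitz estimate on the resolvent to a global Lipschitz bound on $\nabla f$ (equivalently, controlling $\|\nabla f(x)-\nabla f(y)\|$ by $\|x-y\|$ when one only knows $\|(\Id+\nabla f)^{-1}u-(\Id+\nabla f)^{-1}v\|\ge\varepsilon\|u-v\|$): one must invert the relation and track constants, and ensure monotonicity of $\nabla f$ is used to prevent cancellation. Everything else is either a direct appeal to Theorem~\ref{Charac1}, Baillon--Haddad, or a routine resolvent computation.
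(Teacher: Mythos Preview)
Your proposal is correct and follows essentially the same architecture as the paper's proof: \ref{i:normal}$\Leftrightarrow$\ref{i:lsmooth} via Theorem~\ref{Charac1} plus Baillon--Haddad, \ref{i:lsmooth}$\Leftrightarrow$\ref{i:sconvex} by citing \cite[Theorem~18.15]{BC}, \ref{i:normal}$\Rightarrow$\ref{i:bi-lip} via Theorem~\ref{bijectivethe}, and \ref{i:bi-lip}$\Rightarrow$\ref{i:lsmooth} by inverting $\mathrm P_f$. Two points worth noting, though.

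First, for \ref{i:sconvex}$\Leftrightarrow$\ref{i:contraction} the paper simply cites \cite[Corollary~3.6]{luo-wy}, whereas you give a hands-on argument. Your direction \ref{i:sconvex}$\Rightarrow$\ref{i:contraction} (decompose $f^*=\tfrac{1}{2L}\|\cdot\|^2+g$ and read off the contraction constant $\tfrac{L}{L+1}$) is fine. But the converse you state --- ``a contraction $\mathrm P_{f^*}$ with constant $c<1$ reverses this to $1/L$-strong convexity of $f^*$ with $L=c/(1-c)$'' --- is asserted, not argued, and the reversal is \emph{not} purely formal (for a general maximally monotone $A$, $J_A$ a contraction does not force $A$ strongly monotone: take $A$ a nonzero skew operator). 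One clean justification uses the Moreau identity $\mathrm P_{f^*}=\Id-\mathrm P_f=\nabla e_1 f$, so $c$-Lipschitzness of $\mathrm P_{f^*}$ means $e_1 f$ is $c$-smooth, whence $(e_1 f)^*=f^*+\tfrac12\|\cdot\|^2$ is $1/c$-strongly convex, i.e.\ $f^*$ is $(1/c-1)$-strongly convex.

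Second, the ``main obstacle'' you flag for \ref{i:bi-lip}$\Rightarrow$\ref{i:lsmooth} is not really an obstacle, and the paper dispatches it in one line. If $\mathrm P_f$ is a bi-Lipschitz homeomorphism of $X$, then its inverse $(\mathrm P_f)^{-1}=\Id+\partial f$ is everywhere defined, single-valued, and globally Lipschitz; subtracting $\Id$ gives that $\partial f=\nabla f$ is globally Lipschitz, and Baillon--Haddad (Fact~\ref{coL}) then gives cocoercivity. No delicate constant-tracking or ``cancellation'' issue arises.
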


\begin{proof} ``\ref{i:normal}$\Leftrightarrow$\ref{i:lsmooth}":
By Fact \ref{Mor}, $\partial f$ is maximally monotone. Let $A=\partial f$ in Theorem \ref{Charac1} and combine it with Fact \ref{coL}.

``\ref{i:lsmooth}$\Leftrightarrow$\ref{i:sconvex}": Apply \cite[Theorem 18.15]{BC}.

``\ref{i:sconvex}$\Leftrightarrow$\ref{i:contraction}": Apply \cite[Corollary 3.6]{luo-wy}.

``\ref{i:normal}$\Rightarrow$\ref{i:bi-lip}": Apply Theorem~\ref{bijectivethe}.

``\ref{i:bi-lip}$\Rightarrow$\ref{i:normal}": The assumption implies that $(\mathrm{P}_{f})^{-1}=\Id+\partial f$
is full domain, single-valued and Lipschitz, so is
$\partial f=\nabla f$. By Fact~\ref{coL}, $\nabla f$ is co-coercive. It remains to
apply Theorem~\ref{Charac1}.
\end{proof}

\begin{remark}
(1) Bi-Lipschitz homeomorphisms of a Euclidean space form an important class of operators. For instance, Hausdorff dimension, which plays a central role in fractal geometry and harmonic analysis, is bi-Lipschitz invariant (see \cite{falconer-book}). Theorem \ref{t:lipsmooth}$(\mathrm{i})\Leftrightarrow(\mathrm{v})$ thus provides a large class of such nonlinear operators.

(2)
By endowing $\Gamma_{0}(X)$ with the topology of epi-convergence, see, e.g.,
\cite[Proposition 3.5, Corollary 4.18]{planiden},
Theorem~\ref{t:lipsmooth} $(\mathrm{i}) \Leftrightarrow(\mathrm{ii})$ implies
that \emph{most} convex functions have their proximal mappings with modulus of averagedness exactly
$1/2$, in the sense of co-meagerness (the complement of a meager set).
\end{remark}

The characterization of special proximal operator follows immediately as well. The following example shows that
$\mathrm{P}_{f}$ being only bijective
does not imply that $\mathrm{P}_{f}$ is normally nonexpansive.

\begin{example}\label{e:bijectiveno}
Let $X=\mathbb{R}$. Define $$\varphi(x):=\begin{cases}
\ln x & \text{ if $x \geqslant e$,} \\
\frac{1}{e} x & \text{ if $-e<x<e$,}\\
-\ln (-x) & \text{ if $x \leqslant-e.$}
 \end{cases}
 $$
Then the following hold:
\begin{enumerate}
\item\label{i:prox:op} $\varphi$ is a proximal operator of a function in $\Gamma_0(\mathbb{R})$.
\item\label{i:bij} $\varphi$ is a bijection.
\item\label{i:snon} $\varphi$ is specially nonexpansive.
\item\label{i:notlip} The inverse mapping of $\varphi$:
$$(\varphi)^{-1}(y)=\begin{cases}
e^y &\text{ if $y\geqslant 1$,}\\
e y &\text{ if $-1\leqslant y\leqslant 1$,}\\
-e^{-y} & \text{ if $y\leqslant -1$,}
\end{cases}
$$
is not Lipschitz.
\end{enumerate}
 \end{example}
\begin{proof}
\ref{i:prox:op}: $\varphi$ is a proximal operator because it is  nonexpansive and increasing (see \cite[Proposition 24.31]{BC}).
\ref{i:bij}: Obvious. \ref{i:snon}: We have that $\varphi$ is differentiable with
$$\varphi^{\prime}(x)= \begin{cases}
\frac{1}{x} & \text{ if $x \geqslant e$,}\\
\frac{1}{e} & \text{ if $-e<x<e$,}\\
-\frac{1}{x} & \text{ if $x \leqslant-e.$}
\end{cases}$$
Thus $\inf _{x \in \mathbb{R}} \varphi^{\prime}(x)=0$. By Lemma~\ref{modulusR} or \cite[Proposition 2.8]{BBM}, $k(\varphi)=(1-\inf _{x \in \mathbb{R}} \varphi^{\prime}(x))/{2}=1/2$.
\ref{i:notlip}: Direct calculations.
\end{proof}

\begin{corollary}\label{Pfspe}
Let $f\in\Gamma_{0}(X)$. Suppose $\dom f \neq X$. Then $\mathrm{P}_{f}$ is specially nonexpansive.
\end{corollary}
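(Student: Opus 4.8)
The plan is to combine the standard fact that $\mathrm{P}_f$ is firmly nonexpansive with the characterization just established in Theorem~\ref{t:lipsmooth}. Since $\mathrm{P}_f=(\Id+\partial f)^{-1}$ is firmly nonexpansive, Proposition~\ref{p:firm:mod} gives $k(\mathrm{P}_f)\leqslant 1/2$. Hence, to conclude that $\mathrm{P}_f$ is specially nonexpansive, it suffices to rule out $k(\mathrm{P}_f)<1/2$, i.e.\ to show $\mathrm{P}_f$ is \emph{not} normally nonexpansive.

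For this I would argue by contradiction using Theorem~\ref{t:lipsmooth} (the implication \ref{i:normal}$\Rightarrow$\ref{i:lsmooth}). If $\mathrm{P}_f$ were normally nonexpansive, then there would exist $L>0$ with $f$ being $L$-smooth on $X$; in particular $f$ would be Fr\'echet differentiable at every point of $X$, forcing $\dom f=X$ and contradicting the hypothesis $\dom f\neq X$. Therefore $k(\mathrm{P}_f)=1/2$.

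Alternatively, one can give a self-contained argument via Corollary~\ref{Alowerbound}: since $\ran\mathrm{P}_f=\dom\partial f\subseteq\dom f\subsetneq X$, the operator $\mathrm{P}_f$ is not surjective, hence not bijective, so $k(\mathrm{P}_f)\geqslant 1/2$; together with $k(\mathrm{P}_f)\leqslant 1/2$ this yields $k(\mathrm{P}_f)=1/2$. Either route is routine — there is no real obstacle here — so the only point worth stating carefully is why the hypothesis $\dom f\neq X$ obstructs the conclusion of Theorem~\ref{t:lipsmooth} (respectively, surjectivity of $\mathrm{P}_f$), namely that $L$-smoothness on $X$ entails differentiability, hence finiteness, everywhere.
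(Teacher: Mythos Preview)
Your primary argument via Theorem~\ref{t:lipsmooth} is exactly the paper's proof: $\dom f\neq X$ forces $\dom\partial f\neq X$, so $f$ cannot be $L$-smooth for any $L>0$, and hence $\mathrm{P}_f$ is not normally nonexpansive. Your alternative route via Corollary~\ref{Alowerbound} and the non-surjectivity observation $\ran\mathrm{P}_f=\dom\partial f\subseteq\dom f\subsetneq X$ is also correct and in fact more self-contained.
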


\begin{proof}
Observe that $\operatorname{dom} f \neq X$ implies $\operatorname{dom} \partial f \neq X$. Thus $f$ is not $L$-smooth for any $L>0$ and the result follows by Theorem~\ref{t:lipsmooth}.
\end{proof}

\begin{remark}
When $C$ is a nonempty closed convex subset of $X$ and $C\neq X$, obviously $\iota_C \in \Gamma_0(X)$ and $\operatorname{dom} \iota_C=C$. By Corollary \ref{Pfspe}, $P_C$ is specially nonexpansive,
which recovers Example \ref{projectionaverage}.
\end{remark}

For the Moreau envelope we have the following result.

\begin{proposition}\label{t:envel}
Let $f\in\Gamma_{0}(X)$ and let $\mu, \alpha>0$. Then
\begin{equation}\label{e:mod:env}
k(\mathrm{P}_{\alpha e_{\mu}f})=\frac{\alpha}{\mu+\alpha}k(\mathrm{P}_{(\mu+\alpha)f}).
\end{equation}
If, in addition, $f$ is not Lipschitz smooth, then
$$k(\mathrm{P}_{\alpha e_{\mu}f})=\frac{1}{2}\frac{\alpha}{\mu+\alpha}.$$
\end{proposition}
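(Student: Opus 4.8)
The plan is to reduce the statement to the known Yosida-regularization identity. The key observation is that the Moreau envelope interacts with the proximal operator exactly like a Yosida regularization interacts with the resolvent. Concretely, I would first recall that $\partial(e_\mu f)=Y_\mu(\partial f)$ (see \cite[Proposition 12.15, 12.30]{RW} or \cite[Proposition 12.29, 23.7]{BC}), so that $\partial(\alpha e_\mu f)=\alpha Y_\mu(\partial f)$. With this in hand, $\mathrm{P}_{\alpha e_\mu f}=(\Id+\alpha Y_\mu(\partial f))^{-1}=J_{\alpha Y_\mu(\partial f)}$, and we are exactly in the setting of Proposition~\ref{Yosida1} (or its corollaries) applied to the maximally monotone operator $A=\partial f$ (maximal monotonicity being Fact~\ref{Mor}).

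From Proposition~\ref{Yosida1} with this $A$, we get
$$
J_{\alpha Y_\mu(\partial f)}=\frac{\mu}{\mu+\alpha}\Id+\frac{\alpha}{\mu+\alpha}J_{(\mu+\alpha)\partial f}
=\frac{\mu}{\mu+\alpha}\Id+\frac{\alpha}{\mu+\alpha}\mathrm{P}_{(\mu+\alpha)f},
$$
using $J_{(\mu+\alpha)\partial f}=(\Id+(\mu+\alpha)\partial f)^{-1}=\mathrm{P}_{(\mu+\alpha)f}$. This exhibits $\mathrm{P}_{\alpha e_\mu f}$ as the convex combination $(1-\lambda)\Id+\lambda\,\mathrm{P}_{(\mu+\alpha)f}$ with $\lambda=\alpha/(\mu+\alpha)\in(0,1)$. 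Then Lemma~\ref{multi} gives immediately
$$
k(\mathrm{P}_{\alpha e_\mu f})=\lambda\,k(\mathrm{P}_{(\mu+\alpha)f})=\frac{\alpha}{\mu+\alpha}\,k(\mathrm{P}_{(\mu+\alpha)f}),
$$
which is \eqref{e:mod:env}.

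For the second assertion, suppose $f$ is not Lipschitz smooth. I would argue that $(\mu+\alpha)f$ is then not Lipschitz smooth either (scaling a function by a positive constant preserves both Fréchet differentiability and Lipschitz continuity of the gradient, so $(\mu+\alpha)f$ is $L'$-smooth iff $f$ is $L'/(\mu+\alpha)$-smooth). Hence, by Theorem~\ref{t:lipsmooth} (i)$\Leftrightarrow$(ii), $\mathrm{P}_{(\mu+\alpha)f}$ is not normally nonexpansive, so $k(\mathrm{P}_{(\mu+\alpha)f})=1/2$ since it is always $\leqslant 1/2$ (firm nonexpansiveness of a proximal operator). Substituting into \eqref{e:mod:env} yields $k(\mathrm{P}_{\alpha e_\mu f})=\tfrac12\cdot\tfrac{\alpha}{\mu+\alpha}$.

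The only genuinely substantive point — hence the main obstacle — is justifying the identity $\partial(\alpha e_\mu f)=\alpha Y_\mu(\partial f)$, i.e., that the Moreau envelope is precisely the ``integrated'' Yosida regularization at the level of subdifferentials; everything after that is a direct application of Proposition~\ref{Yosida1} and Lemma~\ref{multi}. One should also take a moment to confirm that $e_\mu f\in\Gamma_0(X)$ (it is, being a real-valued convex function that is everywhere finite and continuous), so that $\mathrm{P}_{\alpha e_\mu f}$ and $k(\cdot)$ are defined, and that the trivial degenerate behavior is handled by Lemma~\ref{multi} in the case $k(\mathrm{P}_{(\mu+\alpha)f})=0$.
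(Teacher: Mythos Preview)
Your proposal is correct and follows essentially the same route as the paper: both obtain the identity
\[
\mathrm{P}_{\alpha e_{\mu}f}=\frac{\mu}{\mu+\alpha}\Id+\frac{\alpha}{\mu+\alpha}\mathrm{P}_{(\mu+\alpha)f},
\]
then apply Lemma~\ref{multi}, and finish the second assertion via Theorem~\ref{t:lipsmooth}. The only difference is that the paper quotes this identity directly from \cite[Theorem~27.9]{BM}, whereas you derive it internally from $\partial(e_\mu f)=Y_\mu(\partial f)$ together with Proposition~\ref{Yosida1}; your derivation is slightly more self-contained but otherwise equivalent.
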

\begin{proof}
By \cite[Theorem 27.9]{BM}, we have
$$\mathrm{P}_{\alpha e_{\mu}f}=\frac{\mu}{\mu+\alpha}\Id+\frac{\alpha}{\mu+\alpha}\mathrm{P}_{(\mu+\alpha)f}.$$
It suffices to apply Lemma~\ref{multi}.

If, in addition, $f$ is not Lipschitz smooth, then $(\mu+\alpha)f$ is not Lipschitz smooth
so that $k(\mathrm{P}_{(\mu+\alpha)f})=1/2$ by Theorem~\ref{t:lipsmooth}. Use \eqref{e:mod:env}
to complete the proof.
\end{proof}

\begin{example} Let $\mu, \alpha>0$.
Consider the Huber function defined by
$$
H_{\mu}:X\rightarrow\RR: x\mapsto \begin{cases}
\frac{1}{2\mu}\|x\|^2 & \text{ if $\|x\|\leqslant \mu$,}\\
\|x\|-\frac{\mu}{2} & \text{ if $\|x\|>\mu$.}
\end{cases}
$$
It is well-known that $H_{\mu}=e_{\mu}\|\cdot\|$ and that $\|\cdot\|$ is not Lipschitz smooth,
Therefore, by Proposition~\ref{t:envel},
$$k(\mathrm{P}_{\alpha H_{\mu}})=\frac{1}{2}\frac{\alpha}{\mu+\alpha}.$$
\end{example}

\begin{example}
Let $C$ be a nonempty closed convex subset of $X$ and $C\neq X$. Consider the support function of $C$ defined by
$\sigma_C: X \rightarrow[-\infty,+\infty]: x \mapsto \sup _{c \in C}\langle c, x\rangle.$
Then the following hold:
\begin{enumerate}
\item\label{i:single}If $C$ is a singleton, then
$k(P_{C})=1/2$ and
$(\forall \lambda>0)\ k(\mathrm{P}_{\lambda\sigma_{C}})=0.$
\item\label{i:notsingle} If $C$ contains more than one point, then
$k(P_{C})=1/2$ and
$(\forall \lambda>0)\ k(\mathrm{P}_{\lambda\sigma_{C}})=\frac{1}{2}.$
\end{enumerate}
\end{example}
\begin{proof} The fact that $k(P_{C})=1/2$ has been given by Example~\ref{projectionaverage}.
Now observe that the support function $\sigma_{C}$ has
$\mathrm{P}_{\lambda\sigma_{C}}=\Id-\lambda P_{C}(\cdot/\lambda).$

\ref{i:single}: We have $\mathrm{P}_{\lambda\sigma_{C}}=\Id+v$ for some $v \in X$. Then apply Proposition~\ref{zero}.

\ref{i:notsingle}: The function $\lambda \sigma_{C}(x)$ is not Lipschitz smooth, since it is not
differentiable at $0$. Apply Theorem~\ref{t:lipsmooth} to derive
$k(\mathrm{P}_{\lambda \sigma_{C}})=1/2$.
\end{proof}

\section{Compute modulus of averagedness via other constants or values}\label{s:compute}
In this section, introducing monotone value for monotone operators, cocercive
value for cocoercive mappings and Lipschitz value for Lipschitz mappings,
we provide various formulae to quantify the modulus of averagedness for resolvents and
proximal operators.

\subsection{Monotone value and cocoercive value}

Recall that we assume $A: X \rightrightarrows X$ is a maximally monotone operator.
For $\mu>0$, we say that $A$ is $\mu$-strongly monotone if $A-\mu \mathrm{Id}$ is monotone, i.e., $$
(\forall(x, u) \in \operatorname{gra} A)(\forall(y, v) \in \operatorname{gra} A) \quad\langle x-y, u-v\rangle \geqslant \mu\|x-y\|^2 .
$$
It is clear that if an operator is $\mu_0$-strongly monotone (or cocoercive), then it is $\mu$-strongly monotone (or cocoercive) for $\mu \leqslant \mu_0$. Observing this property, we define the following functions for a maximally monotone operator.

\begin{definition}[monotone value]
Suppose that A is strongly monotone. The monotone value (or best strong monotonicity constant) of $A$ is defined by
$$
m(A):=\sup \{\mu>0 \mid A \text { is } \mu \text {-strongly monotone }\}.
$$
Otherwise, we define $m(A)=0$.
\end{definition}

\begin{definition}[cocoercive value]
Suppose A is single-valued with full domain, and cocoercive. The cocoercive value (or best
cocoercivity constant) of $A$ is defined by
$$
c(A):=\sup \{\mu>0 \mid A \text { is } \mu \text {-cocoercive }\}.
$$
Otherwise, we define $c(A)=0$.
\end{definition}

We present basic properties of monotone value and cocoercive value. Note an operator is $\mu$-cocoercive if and only if its inverse is $\mu$-strongly monotone.

\begin{proposition}\label{cAp}
Let $\mu > 0$. The following hold:
\begin{enumerate}
\item\label{i:co:mono}\emph{(\textbf{duality})}
$m\left(A\right)=c\left(A^{-1}\right)$ and $m\left(A^{-1}\right)=c\left(A\right)$.
\item\label{i:mono:val1}$m(\mu A)=\mu m(A)$ and $c(\mu A)=\mu^{-1} c(A)$.
\item\label{cA=inft}$c(A)=+\infty$ if and only if $A$ is a constant operator on $X$.
\item\label{i:mono:val3}$m(A+B) \geqslant m(A)+m(B)$ and $$c\left(\left(A^{-1}+B^{-1}\right)^{-1}\right) \geqslant c(A)+c(B).$$
\item\label{i:mono:val2}\emph{(\textbf{Yosida regularization})} $m(A+\mu\Id)=m(A)+\mu$ and $c\left(Y_\mu(A)\right)=c(A)+\mu.$
\end{enumerate}
\end{proposition}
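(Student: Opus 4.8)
The plan is to prove the five items in order, with (i) serving as the main lever: it converts any statement about the cocoercive value of an operator into one about the monotone value of its inverse, so that afterwards one only argues on the monotone side. For (i), I would write out the definitions of $m$ and $c$ as suprema and invoke the equivalence recorded just above the statement --- an operator is $\mu$-cocoercive if and only if its inverse is $\mu$-strongly monotone. Hence $\{\mu>0 \mid A \text{ is }\mu\text{-strongly monotone}\}=\{\mu>0 \mid A^{-1} \text{ is }\mu\text{-cocoercive}\}$, so the two suprema coincide when this common set is nonempty, while both $m(A)$ and $c(A^{-1})$ equal $0$ by convention when it is empty; thus $m(A)=c(A^{-1})$. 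Applying the same identity to $A^{-1}$, which is again maximally monotone, gives $m(A^{-1})=c(A)$.

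For (ii), dividing the strong-monotonicity inequality by $\mu$ shows that $\mu A$ is $\nu$-strongly monotone exactly when $A$ is $(\nu/\mu)$-strongly monotone, so the governing supremum scales by $\mu$ and $m(\mu A)=\mu\,m(A)$ (the case $m(A)=0$ being immediate); likewise $\mu A$ is $\nu$-cocoercive exactly when $A$ is $(\mu\nu)$-cocoercive, which scales the supremum by $\mu^{-1}$ and gives $c(\mu A)=\mu^{-1}c(A)$. For (iii), the ``if'' direction is trivial, since a constant operator satisfies $\langle x-y,Ax-Ay\rangle=0=\mu\|Ax-Ay\|^2$ for every $\mu>0$, whence $c(A)=+\infty$. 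Conversely, $c(A)=+\infty$ means, by definition, that $A$ is single-valued with full domain and is $\mu$-cocoercive for arbitrarily large $\mu$, hence for all $\mu>0$; so if $Ax\neq Ay$ for some $x,y$, then $\langle x-y,Ax-Ay\rangle\geqslant\mu\|Ax-Ay\|^2\to+\infty$, contradicting the Cauchy--Schwarz bound $\langle x-y,Ax-Ay\rangle\leqslant\|x-y\|\,\|Ax-Ay\|$. Hence $A$ is constant.

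For (iv), if $A$ is $\alpha$-strongly monotone and $B$ is $\beta$-strongly monotone, then for $(x,w),(y,w')\in\gra(A+B)$ one decomposes $w,w'$ along $\gra A$ and $\gra B$ and adds the two estimates to obtain $\langle x-y,w-w'\rangle\geqslant(\alpha+\beta)\|x-y\|^2$; passing to the supremum over admissible $\alpha,\beta$ (and using that $A$ and $B$ are in any case monotone, to cover the situation where a value is $0$) gives $m(A+B)\geqslant m(A)+m(B)$. The second estimate then follows by applying (i) twice and the first estimate to $A^{-1},B^{-1}$: $c\big((A^{-1}+B^{-1})^{-1}\big)=m(A^{-1}+B^{-1})\geqslant m(A^{-1})+m(B^{-1})=c(A)+c(B)$. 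For (v), observe that $A+\mu\Id$ is $\nu$-strongly monotone if and only if $A+(\mu-\nu)\Id$ is monotone, which is automatic for $\nu\leqslant\mu$ and equivalent to $A$ being $(\nu-\mu)$-strongly monotone for $\nu>\mu$; reading off the supremum gives $m(A+\mu\Id)=\mu+m(A)$ in every case. Since $Y_\mu(A)=(\mu\Id+A^{-1})^{-1}$, so that $Y_\mu(A)^{-1}=\mu\Id+A^{-1}$, item (i) combined with this identity yields $c(Y_\mu(A))=m(\mu\Id+A^{-1})=\mu+m(A^{-1})=\mu+c(A)$.

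No step is conceptually difficult; the main care needed is bookkeeping --- honoring the convention that $m$ or $c$ equals $0$ when the operator fails to be strongly monotone or cocoercive, tracking the possibility that $c$ equals $+\infty$ in (ii) and (iii), and, in (v), the regime $\nu\leqslant\mu$ where the strong monotonicity of $A+\mu\Id$ is free from the mere monotonicity of $A$. One should also check in passing that the operators involved ($\mu A$, $A+B$, $A+\mu\Id$, $Y_\mu(A)$) are monotone so that $m(\cdot)$ and $c(\cdot)$ are meaningful, which is routine.
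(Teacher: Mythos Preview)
Your proposal is correct and follows essentially the same approach as the paper: the paper states that (i)--(iv) ``can be directly verified'' and then proves (v) via exactly the chain $c(Y_\mu(A))=m(\mu\Id+A^{-1})=\mu+m(A^{-1})=\mu+c(A)$ that you use. Your write-up simply supplies the routine details the paper omits, including the edge-case bookkeeping for the conventions $m=0$, $c=0$, $c=+\infty$.
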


\begin{proof}
\ref{i:co:mono}, \ref{i:mono:val1}, \ref{cA=inft} and \ref{i:mono:val3} can be directly verified. \ref{i:mono:val2}:
Since $Y_\mu(A)=\left(\mu \mathrm{Id}+A^{-1}\right)^{-1}$, we have
$$
\begin{aligned}
c\left(Y_\mu(A)\right)
& =c\left((\mu \mathrm{Id}+A^{-1}\right)^{-1})
=m\left(\mu \mathrm{Id}+A^{-1}\right) \\
& =\mu+m\left(A^{-1}\right)
 =\mu+c(A).
\end{aligned}
$$
\end{proof}

The following fact connects averaged operators with cocoercive mappings, and can be directly verified.
\begin{fact} \label{f:xu} \emph{\cite[Proposition 3.4(iii)]{xu}}
Let $T:X\rightarrow X$ be nonexpansive and $\alpha \in (0, 1)$. Then $T$ is $\alpha$-averaged if and only if $\Id-T$ is $1/(2\alpha)$-cocoercive.
\end{fact}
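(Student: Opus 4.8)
The plan is to reduce both sides of the equivalence to one and the same scalar inequality, namely
$$\scal{x-y}{(\Id-T)x-(\Id-T)y}\ge \tfrac{1}{2\alpha}\,\|(\Id-T)x-(\Id-T)y\|^2\qquad(\forall x,y\in X),$$
and then read off the result. First I would fix $x,y\in X$ and abbreviate $u:=x-y$ and $w:=(\Id-T)x-(\Id-T)y$, so that $Tx-Ty=u-w$ and hence $\|Tx-Ty\|^2=\|u-w\|^2=\|u\|^2-2\scal{u}{w}+\|w\|^2$. Since $\alpha\in(0,1)$, the characterization \eqref{e:average1} applies and says that $T$ is $\alpha$-averaged if and only if $\|Tx-Ty\|^2\le\|x-y\|^2-\tfrac{1-\alpha}{\alpha}\|w\|^2$ for all $x,y$. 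Plugging in the expansion and cancelling $\|u\|^2=\|x-y\|^2$ turns this into $-2\scal{u}{w}+\|w\|^2\le-\tfrac{1-\alpha}{\alpha}\|w\|^2$, i.e. $2\scal{u}{w}\ge\bigl(1+\tfrac{1-\alpha}{\alpha}\bigr)\|w\|^2=\tfrac{1}{\alpha}\|w\|^2$, i.e. $\scal{u}{w}\ge\tfrac{1}{2\alpha}\|w\|^2$.

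On the other side, by the definition of cocoercivity recalled in Section~\ref{s:charc}, $\Id-T$ is $\tfrac{1}{2\alpha}$-cocoercive precisely when $\tfrac{1}{2\alpha}(\Id-T)$ is firmly nonexpansive, which, via the standard inner-product form of firm nonexpansiveness, means exactly $\scal{x-y}{(\Id-T)x-(\Id-T)y}\ge\tfrac{1}{2\alpha}\|(\Id-T)x-(\Id-T)y\|^2$ for all $x,y$, i.e. $\scal{u}{w}\ge\tfrac{1}{2\alpha}\|w\|^2$ again. Comparing the two equivalences gives the claim. (One may also note that the hypothesis that $T$ is nonexpansive is redundant: if $\Id-T$ is $\tfrac{1}{2\alpha}$-cocoercive, then the displayed inequality with the nonnegative coefficient $\tfrac{1-\alpha}{\alpha}$ already yields $\|Tx-Ty\|\le\|x-y\|$.)

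There is no genuine obstacle here — this is the kind of statement the authors flag as directly verifiable. The only points requiring a little care are keeping every step of the chain of equivalences reversible, and observing that the range $\alpha\in(0,1)$ is exactly what simultaneously makes $\tfrac{1-\alpha}{\alpha}$ a legitimate positive weight in \eqref{e:average1} and $\tfrac{1}{2\alpha}>\tfrac12$ the associated cocoercivity constant.
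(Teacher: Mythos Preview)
Your proof is correct and is exactly the direct verification the paper alludes to: the authors do not give a proof themselves but simply state that the fact ``can be directly verified'' and cite \cite[Proposition 3.4(iii)]{xu}. Your reduction of both sides to the inequality $\scal{x-y}{(\Id-T)x-(\Id-T)y}\ge\tfrac{1}{2\alpha}\|(\Id-T)x-(\Id-T)y\|^2$ via characterization~\eqref{e:average1} is precisely this verification.
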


\begin{proposition}\label{p:coercive:const}
Let $T:X\rightarrow X$ be nonexpansive. Then
$$k(T)=\frac{1}{2c(\Id-T)}.$$
\end{proposition}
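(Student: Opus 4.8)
The plan is to express $k(T)$ through the cocoercivity constant of $\Id - T$ by leveraging Fact~\ref{f:xu}, which says that for $\alpha \in (0,1)$, $T$ is $\alpha$-averaged if and only if $\Id - T$ is $1/(2\alpha)$-cocoercive. First I would handle the degenerate cases. If $k(T) = 0$, then by Proposition~\ref{zero} we have $T = \Id + v$, so $\Id - T = -v$ is a constant operator, whence $c(\Id - T) = +\infty$ by Proposition~\ref{cAp}\ref{cA=inft}; thus both sides equal $0$. Conversely, if $c(\Id - T) = +\infty$ the same proposition forces $\Id - T$ constant, i.e. $T = \Id + v$, so $k(T) = 0$ and again both sides agree.

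Next I would treat the main case $0 < k(T) < 1$ (note $k(T) = 1$ cannot occur here, since $T$ being $1$-averaged means $\Id - T$ is $1/2$-cocoercive, hence nonexpansive-ish; more precisely I should check $k(T) = 1$ separately too). For $k(T) \in (0,1)$: by Proposition~\ref{attain}, $T$ is $k(T)$-averaged, so by Fact~\ref{f:xu} the operator $\Id - T$ is $1/(2k(T))$-cocoercive, giving $c(\Id - T) \geqslant 1/(2k(T))$, i.e. $k(T) \geqslant 1/(2c(\Id-T))$. For the reverse inequality, take any $\mu$ with $0 < \mu < c(\Id - T)$; then $\Id - T$ is $\mu$-cocoercive, and provided $1/(2\mu) < 1$ we may apply the converse direction of Fact~\ref{f:xu} with $\alpha = 1/(2\mu)$ to conclude $T$ is $1/(2\mu)$-averaged, hence $k(T) \leqslant 1/(2\mu)$; letting $\mu \uparrow c(\Id - T)$ yields $k(T) \leqslant 1/(2 c(\Id - T))$. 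One needs $c(\Id - T) > 1/2$ for this argument, which holds precisely when $k(T) < 1$; combined with the case $k(T) = 1$ (where $\Id - T$ is exactly $1/2$-cocoercive and no better, so $c(\Id - T) = 1/2$ and both sides equal $1$), all cases are covered.

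The main obstacle I anticipate is the bookkeeping around the boundary value $\alpha = 1$: Fact~\ref{f:xu} is stated only for $\alpha \in (0,1)$, so the equivalence ``$T$ is $1$-averaged $\iff$ $\Id - T$ is $\frac{1}{2}$-cocoercive'' is not directly available and must be argued by hand (or by a limiting argument from $\alpha < 1$, using that $\mu \mapsto$ ``$\Id-T$ is $\mu$-cocoercive'' defines a closed condition, so the supremum defining $c(\Id - T)$ is attained). A clean way to sidestep this is to observe that for nonexpansive $T$ one always has $\Id - T$ is $\tfrac12$-cocoercive (equivalently, $T$ firmly nonexpansive $\iff$ $T$ is $\tfrac12$-averaged, which is classical), so $c(\Id - T) \geqslant \tfrac12$ always, and then the supremum in the definition of $c$ is over a nonempty set bounded below away from $0$; a short direct computation shows the cocoercivity inequality passes to the supremum, so $\Id - T$ is in fact $c(\Id - T)$-cocoercive whenever $c(\Id-T) < +\infty$. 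With that in hand the two inequalities above pin down $k(T) = 1/(2 c(\Id - T))$ exactly, and I would close by remarking that the formula also reads correctly as $+\infty$ versus $0$ in the constant case.
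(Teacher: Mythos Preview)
Your proposal is correct and follows essentially the same route as the paper: the paper's proof is the one-line ``Combine Proposition~\ref{zero} and Fact~\ref{f:xu},'' and you are unpacking exactly that combination. If anything, you are more careful than the paper, since you explicitly address the boundary case $k(T)=1$ that Fact~\ref{f:xu} (stated only for $\alpha\in(0,1)$) does not literally cover; your observation that $T$ nonexpansive is equivalent to $\Id-T$ being $\tfrac12$-cocoercive handles this cleanly. One small wording slip: the parenthetical ``equivalently, $T$ firmly nonexpansive $\iff$ $T$ is $\tfrac12$-averaged'' is not the relevant equivalence here---what you need (and actually use) is ``$T$ nonexpansive $\iff$ $\Id-T$ is $\tfrac12$-cocoercive,'' which is a direct two-line computation.
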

\begin{proof}
Combine Proposition~\ref{zero} and Fact~\ref{f:xu}.
\end{proof}

\begin{corollary}\label{ellT<=2kT}
Let $T: X \rightarrow X$ be normally nonexpansive. Then $\Id-T$ is a Banach  contraction with constant $2 k(T)$.
\end{corollary}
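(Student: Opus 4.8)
The plan is to read the contraction constant directly off Proposition~\ref{p:coercive:const}. Since $T$ is normally nonexpansive, $2k(T)<1$, so it suffices to prove $\|(\Id-T)x-(\Id-T)y\|\le 2k(T)\|x-y\|$ for all $x,y\in X$; this makes $\Id-T$ a Banach contraction with constant $2k(T)$.

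First I would dispose of the degenerate case $k(T)=0$: by Proposition~\ref{zero} there is $v\in X$ with $T=\Id+v$, so $(\Id-T)x\equiv -v$ is constant and the estimate holds trivially with constant $0=2k(T)$. Assume now $0<k(T)<1/2$. By Proposition~\ref{p:coercive:const} we have $c(\Id-T)=1/(2k(T))$, a finite positive number, and $\Id-T$ is in fact $c(\Id-T)$-cocoercive: the defining inequality $\scal{x-y}{(\Id-T)x-(\Id-T)y}\ge \mu\|(\Id-T)x-(\Id-T)y\|^2$ is preserved on passing to the supremum over admissible $\mu$ (alternatively, invoke Proposition~\ref{attain} with $\beta=k(T)$ together with Fact~\ref{f:xu}).

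The final step is the Cauchy--Schwarz argument already used in the proof of Lemma~\ref{lemma}: from
$$c(\Id-T)\,\|(\Id-T)x-(\Id-T)y\|^2\le \scal{x-y}{(\Id-T)x-(\Id-T)y}\le \|x-y\|\,\|(\Id-T)x-(\Id-T)y\|,$$
dividing by $\|(\Id-T)x-(\Id-T)y\|$ when it is nonzero (the inequality being trivial otherwise) gives $\|(\Id-T)x-(\Id-T)y\|\le \|x-y\|/c(\Id-T)=2k(T)\|x-y\|$. There is no substantive obstacle here; the only points deserving a word of care are the attainment of the cocoercive value (immediate) and the trivial case $k(T)=0$.
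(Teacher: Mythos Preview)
Your proof is correct and follows essentially the same route as the paper: invoke Proposition~\ref{p:coercive:const} to obtain that $\Id-T$ is $1/(2k(T))$-cocoercive, then apply the Cauchy--Schwarz inequality to pass to the Lipschitz bound $2k(T)$, and conclude with $2k(T)<1$. You are a bit more careful than the paper in separating out the degenerate case $k(T)=0$ and in noting why the cocoercive value is actually attained, but these are cosmetic refinements rather than a different argument.
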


\begin{proof}
By Proposition \ref{p:coercive:const}, $\operatorname{Id}-T$ is cocoercive with constant $1/(2 k(T))$. Using the Cauchy-Schwarz inequality, we have that $\Id-T$ is Lipschitz with constant $2 k(T)$. The contraction property follows by $2 k(T)<1$ since $T$ is normally nonexpansive.
\end{proof}

\begin{remark}\label{suggestion}
Lemma~\ref{main1} can also be proved by using Corollary \ref{ellT<=2kT} and the Banach fixed-point theorem. Indeed, given a normally nonexpansive $T$ and for any $v \in X$, the mapping $x \mapsto x-T x+v$ is
a Banach contraction and therefore has a fixed point $x_0$. Then $x_0=x_0-T x_0+v$ which implies that $T x_0=v$, therefore $T$ is surjective.
\end{remark}

The following result connects the modulus of averagedness
of a resolvent to the co-coercivity of associated maximally monotone operator.

\begin{proposition}[modulus of averagedness via cocoercive value]\label{Mainformula1}
Let $A: X \rightrightarrows X$ be maximally monotone and $\alpha>0$. Then
$$
k\left(J_{\alpha A}\right)=\frac{1}{2} \frac{\alpha}{\alpha+c(A)}.
$$
\end{proposition}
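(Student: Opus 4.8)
The plan is to reduce the computation to the cocoercive-value formula for the modulus, Proposition~\ref{p:coercive:const}, by means of the classic Yosida identity $Y_\mu(A)=\mu^{-1}(\Id-J_{\mu A})$. Since $\alpha A$ is maximally monotone, Minty's theorem (Fact~\ref{Minty}) shows that $J_{\alpha A}$ is firmly nonexpansive, so $k(J_{\alpha A})$ is defined and lies in $[0,1/2]$, which is consistent with the range of the asserted right-hand side. Taking $\mu=\alpha$ in the Yosida identity gives
$$\Id-J_{\alpha A}=\alpha\,Y_\alpha(A).$$

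Next I would apply Proposition~\ref{p:coercive:const} to the nonexpansive operator $T=J_{\alpha A}$, obtaining
$$k(J_{\alpha A})=\frac{1}{2\,c(\Id-J_{\alpha A})}=\frac{1}{2\,c(\alpha Y_\alpha(A))}.$$
It then remains to evaluate $c(\alpha Y_\alpha(A))$. By the scaling rule Proposition~\ref{cAp}\ref{i:mono:val1} we have $c(\alpha Y_\alpha(A))=\alpha^{-1}c(Y_\alpha(A))$, and by the Yosida-regularization rule Proposition~\ref{cAp}\ref{i:mono:val2} we have $c(Y_\alpha(A))=c(A)+\alpha$. Substituting,
$$k(J_{\alpha A})=\frac{1}{2\,\alpha^{-1}\bigl(c(A)+\alpha\bigr)}=\frac{\alpha}{2\bigl(\alpha+c(A)\bigr)}=\frac{1}{2}\,\frac{\alpha}{\alpha+c(A)}.$$

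No single step here is genuinely difficult; the argument is essentially an assembly of earlier results, so the only point demanding care is the boundary behaviour. When $c(A)=0$ the chain above gives $c(\Id-J_{\alpha A})=1$, hence $k(J_{\alpha A})=1/2$; when $c(A)=+\infty$, Proposition~\ref{cAp}\ref{cA=inft} says that $A$ is a constant operator, so $J_{\alpha A}=\Id+v$ for some $v\in X$ and Proposition~\ref{zero} gives $k(J_{\alpha A})=0$, matching $\tfrac{1}{2}\cdot\tfrac{\alpha}{\alpha+\infty}=0$ under the convention $1/\infty=0$. Thus the identity holds for every $c(A)\in[0,+\infty]$ once these conventions are in force, and I would simply flag these two extreme cases rather than treat them as separate subproofs.
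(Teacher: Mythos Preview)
Your proof is correct and follows essentially the same approach as the paper: both combine the Yosida identity $\Id-J_{\mu A}=\mu\,Y_\mu(A)$ with Proposition~\ref{p:coercive:const} and the scaling and Yosida rules in Proposition~\ref{cAp}. The only cosmetic difference is that the paper first reduces to $\alpha=1$ via $c(\alpha A)=\alpha^{-1}c(A)$ and then applies $c(Y_1(A))=c(A)+1$, whereas you keep $\alpha$ general and apply the scaling rule to $Y_\alpha(A)$ instead; your explicit handling of the extreme cases $c(A)\in\{0,+\infty\}$ is a welcome addition not spelled out in the paper.
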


\begin{proof}
In view of Proposition~\ref{cAp}\ref{i:mono:val1}, it suffices to prove the case when $\alpha=1$. Note that $Y_1(A)=\operatorname{Id}-J_{A}$. By Proposition~\ref{cAp}\ref{i:mono:val2}, $c\left(\operatorname{Id}-J_{A}\right)=c(A)+1$. Now apply Proposition \ref{p:coercive:const}.
\end{proof}

We have the following corollary in view of Proposition~\ref{cAp}\ref{i:co:mono}.

\begin{corollary}[modulus of averagedness via monotone value]
Let $A:X\rightrightarrows X$ be maximally monotone and $\alpha>0$. Then
$$k(J_{\alpha A})=\frac{1}{2} \frac{\alpha}{\alpha+m\left(A^{-1}\right)}.$$
\end{corollary}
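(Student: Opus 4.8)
The plan is to derive this directly from Proposition~\ref{Mainformula1}, which already expresses $k(J_{\alpha A})$ in terms of the cocoercive value $c(A)$, and then translate $c(A)$ into a monotone value using the duality established in Proposition~\ref{cAp}\ref{i:co:mono}.

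Concretely, I would first invoke Proposition~\ref{Mainformula1} to write
$$
k\left(J_{\alpha A}\right)=\frac{1}{2}\,\frac{\alpha}{\alpha+c(A)}.
$$
Next, by Proposition~\ref{cAp}\ref{i:co:mono} we have $c(A)=m(A^{-1})$ (this is valid whether or not $A$ is cocoercive, since in the non-cocoercive case both sides are $0$, and $A^{-1}$ is again maximally monotone so $m(A^{-1})$ is well defined). Substituting $c(A)=m(A^{-1})$ into the displayed formula yields
$$
k\left(J_{\alpha A}\right)=\frac{1}{2}\,\frac{\alpha}{\alpha+m(A^{-1})},
$$
which is exactly the claimed identity.

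There is essentially no obstacle here: the content is entirely contained in the two cited results, and the only point requiring a moment's care is making sure the duality $c(A)=m(A^{-1})$ applies uniformly, including the degenerate case where $A$ fails to be single-valued, full-domain and cocoercive (so $c(A)=0$), which is covered by the convention $m(A^{-1})=0$ adopted in the definition of the monotone value. Hence the proof is a one-line substitution, and I would present it as such.
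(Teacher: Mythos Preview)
Your proposal is correct and matches the paper's approach exactly: the paper simply states that the corollary follows from Proposition~\ref{Mainformula1} in view of the duality $c(A)=m(A^{-1})$ from Proposition~\ref{cAp}\ref{i:co:mono}, which is precisely the substitution you carry out.
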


The following example illustrates our formulae in this subsection.

\begin{example} Suppose that $A:X\rightarrow X$ is a bounded linear operator and that
$A$ is skew, i.e., $(\forall x\in X) \scal{x}{Ax}=0.$ Then $A$ is maximally monotone,
and the following hold:
\begin{enumerate}
\item If $A\equiv 0$, then $c(A)=+\infty$. Clearly $$k(J_{A})=k(\Id)=0=\frac{1}{2}\frac{1}{1+\infty}.$$
\item If $A$ is not a zero operator, then $c(A)=m(A)=m(A^{-1})=c(A^{-1})=0$. Therefore the formulae give $k(J_{A})=k(J_{A^{-1}})=1/2$, which coincides with Theorem~\ref{Charac1} because $A$ and $A^{-1}$ is not cocoercive.
\end{enumerate}
\end{example}

\subsection{Lipschitz value}

\begin{definition}[Lipschitz value]
Let $T: X \rightarrow X$. The Lipschitz value (or best Lipschitz constant) of $T$ is defined by
$$
\ell(T):=\inf\left\{L \geqslant 0 \mid \forall x, y \in X,  \|Tx-Ty\| \leqslant L\|x-y\|\right\}.
$$
Moreover, for a maximally monotone operator $A: X \rightrightarrows X$, define $\ell(A)=+\infty$ if $A$ is not single-valued with full domain.
\end{definition}

The following formula connects Lipschitz value with cocoercive value. Note that we follow the convention that $\inf \varnothing=+\infty$, $(+\infty)^{-1}=0$ and $0^{-1}=+\infty$.

\begin{lemma}
$\ell(A) \leqslant[c(A)]^{-1}$.
\end{lemma}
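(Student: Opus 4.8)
The plan is to dispose of the degenerate cases first and then reduce everything to a one-line Cauchy--Schwarz computation. If $c(A)=0$ then $[c(A)]^{-1}=+\infty$ by the stated conventions, so the inequality is vacuous; this already covers the case where $A$ fails to be single-valued with full domain, since then $c(A)=0$. If $c(A)=+\infty$, then by Proposition~\ref{cAp}\ref{cA=inft} the operator $A$ is constant on $X$, hence $\ell(A)=0=[c(A)]^{-1}$. So from now on I would assume $0<c(A)<+\infty$, in which case $A$ is single-valued with full domain and cocoercive.

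For the main case, fix any $\mu$ with $0<\mu<c(A)$. By definition of $c(A)$ the operator $A$ is $\mu$-cocoercive, i.e.
$$
(\forall x\in X)(\forall y\in X)\qquad \langle x-y,\,Ax-Ay\rangle \geqslant \mu\|Ax-Ay\|^2 .
$$
Combining this with the Cauchy--Schwarz inequality $\langle x-y,\,Ax-Ay\rangle\leqslant \|x-y\|\,\|Ax-Ay\|$ yields $\mu\|Ax-Ay\|^2\leqslant \|x-y\|\,\|Ax-Ay\|$, and therefore $\|Ax-Ay\|\leqslant \mu^{-1}\|x-y\|$ for all $x,y\in X$ (the estimate being trivial when $Ax=Ay$). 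Hence $\ell(A)\leqslant \mu^{-1}$. Letting $\mu\uparrow c(A)$ gives $\ell(A)\leqslant [c(A)]^{-1}$, which completes the argument.

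There is no real obstacle here: the only point requiring a word of care is that I only invoke $\mu$-cocoercivity for $\mu$ strictly below the supremum $c(A)$ (so as not to rely on whether the supremum in the definition of $c(A)$ is attained), and then pass to the limit; and one should keep in mind the conventions $(+\infty)^{-1}=0$, $0^{-1}=+\infty$ when reading the boundary cases. The computation is the same Cauchy--Schwarz trick already used in the proof of Corollary~\ref{ellT<=2kT}.
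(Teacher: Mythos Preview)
Your proof is correct and follows essentially the same three-case split and Cauchy--Schwarz argument as the paper. The only difference is cosmetic: the paper asserts directly that $A$ is $c(A)$-cocoercive when $c(A)\in(0,+\infty)$ (which is justified since the cocoercivity inequality is closed in $\mu$), whereas you work with $\mu<c(A)$ and pass to the limit; both routes arrive at the same conclusion with the same effort.
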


\begin{proof}
Suppose $c(A) \in(0,+\infty)$. Then $A$ is $c(A)$-cocoercive and therefore $[c(A)]^{-1}$-Lipschitz on $X$
by the Cauchy-Schwarz inequality. Thus $\ell(A) \leqslant[c(A)]^{-1}$.

Suppose $c(A)=+\infty$. It follows from Proposition~\ref{cAp}\ref{cA=inft} that
$A$ is a constant operator. Thus $\ell(A)=0=[c(A)]^{-1}$.

Suppose $c(A)=0$. Then $\ell(A) \leqslant+\infty=[c(A)]^{-1}$.
\end{proof}

\begin{fact}\label{dif}\emph{\cite[Proposition 17.31]{BC}}
Let $f$ be convex and proper on $X$, and suppose that $x \in \operatorname{int} \operatorname{dom} f$. Then $
f \text { is Gâteaux differentiable at } x \Leftrightarrow \partial f(x) \text { is a singleton }
$
in which case $\partial f(x)=\{\nabla f(x)\}$.
\end{fact}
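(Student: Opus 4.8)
\emph{Proof plan.} The plan is to route everything through the one-sided directional derivative of $f$ at $x$. For $d \in X$ set $f'(x;d) := \lim_{t \downarrow 0}\bigl(f(x+td)-f(x)\bigr)/t$. First I would record the basic facts: since $f$ is convex, the difference quotient $t \mapsto \bigl(f(x+td)-f(x)\bigr)/t$ is nondecreasing on $\RPP$, and since $x \in \operatorname{int}\operatorname{dom} f$ (where $f$ is bounded above on a neighbourhood, hence locally Lipschitz --- this is the point at which lower semicontinuity of $f$, or continuity at $x$, enters) this quotient is bounded near $0$; therefore $f'(x;d) \in \RR$ exists for every $d$, and $d \mapsto f'(x;d)$ is sublinear and Lipschitz.

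The heart of the argument is the max formula
\[
(\forall d \in X)\qquad f'(x;d) \;=\; \max_{u \in \partial f(x)} \langle u, d\rangle \;=\; \sigma_{\partial f(x)}(d).
\]
Here the inequality $\langle u,d\rangle \leqslant f'(x;d)$ for $u \in \partial f(x)$ is immediate: divide the subgradient inequality $f(x+td) \geqslant f(x) + t\langle u,d\rangle$ by $t>0$ and let $t \downarrow 0$. For the reverse inequality I would fix a direction $d$ and apply the Hahn--Banach extension theorem to the sublinear functional $f'(x;\cdot)$: the linear functional $sd \mapsto s\,f'(x;d)$ on $\RR d$ is dominated by $f'(x;\cdot)$, so it extends to $\langle u,\cdot\rangle$ with $u \in X$, $\langle u,\cdot\rangle \leqslant f'(x;\cdot)$ and $\langle u,d\rangle = f'(x;d)$; convexity of $f$ upgrades $\langle u,\cdot\rangle \leqslant f'(x;\cdot)$ to the subgradient inequality, so $u \in \partial f(x)$ attains the supremum. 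In particular $\partial f(x) \neq \varnothing$, and $\partial f(x)$ is closed and convex, being an intersection of closed half-spaces.

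With the max formula available, both implications are short. If $\partial f(x) = \{u\}$, then $f'(x;d) = \langle u,d\rangle$ for every $d$; applying this also at $-d$ shows the left-sided limit $\lim_{t\uparrow 0}\bigl(f(x+td)-f(x)\bigr)/t$ equals $-f'(x;-d) = \langle u,d\rangle$, so the two-sided limit exists and $f$ is Gâteaux differentiable at $x$ with $\nabla f(x) = u$. Conversely, if $f$ is Gâteaux differentiable at $x$ with gradient $g := \nabla f(x)$, then $f'(x;d) = \langle g,d\rangle$ for every $d$, whence $\sigma_{\partial f(x)} = \langle g,\cdot\rangle = \sigma_{\{g\}}$; since $\partial f(x)$ is nonempty, closed and convex, it coincides with $\{g\}$ --- concretely any $u \in \partial f(x)$ satisfies $\langle u,d\rangle \leqslant \langle g,d\rangle$ for all $d$, forcing $u = g$.

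The main obstacle is the max formula, and inside it the reverse inequality --- manufacturing a single subgradient that realises $f'(x;d)$ in a prescribed direction. This is precisely where the Hahn--Banach argument and the finiteness and continuity of $f'(x;\cdot)$ near $x$, inherited from local Lipschitzness of $f$ at the interior point $x$, are indispensable; everything else is routine bookkeeping with sublinear and support functions.
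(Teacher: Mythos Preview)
Your argument is correct and is exactly the standard route to this result: establish the max formula $f'(x;d)=\sigma_{\partial f(x)}(d)$ via Hahn--Banach applied to the finite sublinear functional $f'(x;\cdot)$, then read off both implications from properties of support functions. Note, however, that the paper does not supply its own proof of this statement --- it is recorded as a Fact with a bare citation to \cite[Proposition~17.31]{BC} --- so there is nothing in the paper to compare against beyond observing that your proof is essentially the one given in that reference.
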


\begin{proposition}\label{clformula2}
$\ell(\partial f)=[c(\partial f)]^{-1}$.
\end{proposition}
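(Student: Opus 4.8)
The plan is to prove the two inequalities $\ell(\partial f)\leqslant [c(\partial f)]^{-1}$ and $\ell(\partial f)\geqslant [c(\partial f)]^{-1}$ separately, using throughout the conventions $\inf\varnothing=+\infty$, $(+\infty)^{-1}=0$, $0^{-1}=+\infty$. By Fact~\ref{Mor}, $\partial f$ is maximally monotone, so the preceding Lemma immediately gives the first inequality $\ell(\partial f)\leqslant [c(\partial f)]^{-1}$. Note this already disposes of the case $\ell(\partial f)=+\infty$: the inequality then forces $c(\partial f)=0$, hence $[c(\partial f)]^{-1}=+\infty=\ell(\partial f)$.

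For the reverse inequality $\ell(\partial f)\geqslant [c(\partial f)]^{-1}$ I would assume $\ell(\partial f)<+\infty$. By the definition of $\ell$ for monotone operators this means $\partial f$ is single-valued with full domain; in particular $\dom f=X=\intdom f$, so Fact~\ref{dif} shows $f$ is G\^ateaux differentiable on $X$ with $\partial f=\nabla f$, and $\nabla f$ is $\ell(\partial f)$-Lipschitz. If $\ell(\partial f)=0$, then $\nabla f$ is constant, so $\partial f$ is a constant operator and Proposition~\ref{cAp}\ref{cA=inft} gives $c(\partial f)=+\infty$, i.e. $[c(\partial f)]^{-1}=0=\ell(\partial f)$. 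If $0<\ell(\partial f)<+\infty$, I would first upgrade G\^ateaux to Fr\'echet differentiability: a convex function whose G\^ateaux gradient is $L$-Lipschitz satisfies the descent inequality $f(y)-f(x)-\langle\nabla f(x),y-x\rangle\leqslant \tfrac{L}{2}\|y-x\|^2$, hence is Fr\'echet differentiable. Then Fact~\ref{coL} (Baillon-Haddad) applies with $\mu=[\ell(\partial f)]^{-1}$ and shows $\nabla f$ is $[\ell(\partial f)]^{-1}$-cocoercive, so $c(\partial f)\geqslant [\ell(\partial f)]^{-1}>0$; taking reciprocals gives $[c(\partial f)]^{-1}\leqslant\ell(\partial f)$.

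Combining the two inequalities yields $\ell(\partial f)=[c(\partial f)]^{-1}$. The one step I expect to require genuine care — as opposed to routine bookkeeping with the degenerate values and the non-single-valued case — is the passage from G\^ateaux to Fr\'echet differentiability, which is needed to invoke Fact~\ref{coL} exactly as stated; once that is in place, the nontrivial direction of Baillon-Haddad carries the argument.
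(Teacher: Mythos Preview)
Your proof is correct and uses essentially the same ingredients as the paper's --- Fact~\ref{dif} to pass from single-valuedness of $\partial f$ to differentiability, Proposition~\ref{cAp}\ref{cA=inft} for the constant case, and Baillon--Haddad (Fact~\ref{coL}) for the cocoercive/Lipschitz equivalence --- though you organize the case split on $\ell(\partial f)$ whereas the paper splits on $c(\partial f)$. Your explicit attention to the G\^ateaux-to-Fr\'echet upgrade needed before invoking Fact~\ref{coL} is a point the paper's proof glosses over.
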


\begin{proof}
Suppose $c(\partial f) \in(0,+\infty)$. Then $\partial f$ is singe-valued with full domain. Thus $\partial f=\nabla f$ by Fact \ref{dif}. While $\nabla f$ is $c(\partial f)$-cocoercive, by applying Fact \ref{coL} we have $\ell(\nabla f)=[c(\nabla f)]^{-1}$.

Suppose $c(\partial f)=+\infty$. Then $\partial f$ is a constant operator by Proposition \ref{cAp}. Thus $\ell(\partial f)=0=[c(\partial f)]^{-1}$.

Suppose $c(\partial f)=0$. If $\partial f$ is singe-valued with full domain, then again by applying Fact \ref{dif} and Fact \ref{coL} we have $\partial f=\nabla f$ is not Lipschitz, thus $\ell(\partial f)=+\infty=[c(\partial f)]^{-1}$. If $\partial f$ is not singe-valued, or not with full domain, then $\ell(\partial f)=+\infty$ by the definition of Lipschitz value. Thus $\ell(\partial f)=+\infty=[c(\partial f)]^{-1}$.
\end{proof}

Now we are able to propose the following interesting formula for proximal operators.

\begin{theorem}[modulus of averagedness via Lipschitz value]
\label{mainformula2}
Let $f \in \Gamma_0(X)$. Then
$$
k\left(\mathrm{P}_{f}\right)=\frac{1}{2} \frac{1}{1+[\ell(\partial f)]^{-1}}.
$$
\end{theorem}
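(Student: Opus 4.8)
The plan is to obtain the formula by combining two results already established: Proposition~\ref{Mainformula1}, which expresses $k(J_{\alpha A})$ through the cocoercive value $c(A)$, and Proposition~\ref{clformula2}, which identifies $\ell(\partial f)$ with $[c(\partial f)]^{-1}$. First I would invoke Fact~\ref{Mor} to ensure $\partial f$ is maximally monotone, so that $\mathrm{P}_{f}=(\Id+\partial f)^{-1}=J_{\partial f}=J_{1\cdot\partial f}$ and Proposition~\ref{Mainformula1} is applicable with $A=\partial f$ and $\alpha=1$. This gives
\[
k(\mathrm{P}_{f})=k(J_{\partial f})=\frac{1}{2}\,\frac{1}{1+c(\partial f)}.
\]

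Next I would substitute $c(\partial f)=[\ell(\partial f)]^{-1}$, which is exactly Proposition~\ref{clformula2} read under the conventions $(+\infty)^{-1}=0$ and $0^{-1}=+\infty$ fixed earlier in this subsection. This immediately yields
\[
k(\mathrm{P}_{f})=\frac{1}{2}\,\frac{1}{1+[\ell(\partial f)]^{-1}},
\]
which is the assertion. It then remains only to confirm that the degenerate cases are correctly covered by the conventions: if $\partial f$ is not single-valued with full domain, or $\nabla f$ is not Lipschitz, then $\ell(\partial f)=+\infty$, hence $c(\partial f)=0$ and the formula returns $k(\mathrm{P}_{f})=1/2$, consistent with Theorem~\ref{t:lipsmooth} and Corollary~\ref{Pfspe}; if $\partial f$ is a constant operator (equivalently $f$ is affine), then $\ell(\partial f)=0$, hence $c(\partial f)=+\infty$ and the formula returns $k(\mathrm{P}_{f})=0$, consistent with Proposition~\ref{zero}.

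I do not expect any genuine obstacle here: the mathematical content has already been packaged into Propositions~\ref{Mainformula1} and~\ref{clformula2}, so the theorem is a one-line synthesis together with the extended-real bookkeeping. The only place where a little care is needed is to invoke the identity $c(\partial f)=[\ell(\partial f)]^{-1}$ in the direction stated in Proposition~\ref{clformula2} and to match each of the three regimes $c(\partial f)\in(0,\infty)$, $c(\partial f)=0$, $c(\partial f)=+\infty$ against the corresponding value of $k(\mathrm{P}_{f})$.
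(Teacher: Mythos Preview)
Your proposal is correct and follows exactly the paper's own proof: invoke Fact~\ref{Mor} to ensure $\partial f$ is maximally monotone, apply Proposition~\ref{Mainformula1} with $A=\partial f$ and $\alpha=1$, and then substitute via Proposition~\ref{clformula2}. Your additional check of the degenerate cases is a nice sanity verification but is already absorbed by the extended-real conventions.
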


\begin{proof}
By Fact \ref{Mor}, $\partial f$ is maximally monotone. The result follows by letting $A=\partial f$ in Proposition \ref{Mainformula1} and combining it with Proposition \ref{clformula2}.
\end{proof}

Using $\ell(\alpha T)=\alpha \ell(T)$ for $\alpha>0$, we obtain the following result.

\begin{corollary}\label{Mainformula3}
Let $f \in \Gamma_0(X)$ be $L$-smooth on $X$ for some $L>0$ and let $\alpha>0$. Then
$$
k\left(\mathrm{P}_{\alpha f}\right)=\frac{1}{2} \frac{\alpha \ell(\nabla f)}{1+\alpha \ell(\nabla f)}.
$$
\end{corollary}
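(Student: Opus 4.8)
The plan is to apply Theorem~\ref{mainformula2} to the function $\alpha f$ in place of $f$ and then simplify using the elementary scaling rules for the subdifferential and for the Lipschitz value. Since $\alpha f \in \Gamma_0(X)$ whenever $f \in \Gamma_0(X)$ and $\alpha>0$, Theorem~\ref{mainformula2} applies and gives
$$k(\mathrm{P}_{\alpha f}) = \frac{1}{2}\,\frac{1}{1+[\ell(\partial(\alpha f))]^{-1}}.$$

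Next I would record the standard identity $\partial(\alpha f) = \alpha\,\partial f$, immediate from the definition of the subdifferential for $\alpha>0$. Since $f$ is $L$-smooth, $\partial f = \nabla f$ is single-valued with full domain (Fact~\ref{dif}), hence $\partial(\alpha f) = \alpha\nabla f$ is also single-valued with full domain. Because $\nabla f$ is $L$-Lipschitz we have $\ell(\nabla f) \in [0,L] \subset [0,+\infty)$, and the positive homogeneity $\ell(\alpha T) = \alpha\ell(T)$ then yields $\ell(\partial(\alpha f)) = \alpha\ell(\nabla f)$. Substituting into the displayed formula and clearing the inner fraction gives
$$k(\mathrm{P}_{\alpha f}) = \frac{1}{2}\,\frac{1}{1+[\alpha\ell(\nabla f)]^{-1}} = \frac{1}{2}\,\frac{\alpha\ell(\nabla f)}{1+\alpha\ell(\nabla f)},$$
which is the claim.

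The only point needing a word of care is the degenerate case $\ell(\nabla f)=0$, that is, $\nabla f$ constant and $f$ affine: there the stated conventions $0^{-1}=+\infty$ and $1/(1+\infty)=0$ make the right-hand side equal $0$, which is consistent with $\mathrm{P}_{\alpha f}$ being of the form $\Id+v$ and hence $k(\mathrm{P}_{\alpha f})=0$ by Proposition~\ref{zero}. I do not expect any genuine obstacle here; the argument is a direct specialization of Theorem~\ref{mainformula2} together with the scaling identities $\partial(\alpha f)=\alpha\partial f$ and $\ell(\alpha T)=\alpha\ell(T)$.
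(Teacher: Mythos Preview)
Your proof is correct and follows essentially the same route as the paper: apply Theorem~\ref{mainformula2} to $\alpha f$ and reduce via the scaling identities $\partial(\alpha f)=\alpha\partial f$ and $\ell(\alpha T)=\alpha\ell(T)$. The paper compresses this to a single sentence, while you spell out the steps and address the degenerate case $\ell(\nabla f)=0$ explicitly, but the underlying argument is identical.
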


The following example illustrates our formulae in this subsection.

\begin{example}\label{example1}
Let $C$ be a nonempty closed convex set in $X$ and $C \neq X$. Consider the distance function of $C$ defined by
$
d_C(x): X \rightarrow[-\infty,+\infty]: x \mapsto\inf _{c \in C}\|x-c\|.
$
Then for any $\alpha>0$ the following hold:
\begin{enumerate}
\item\label{i:dsquare} $k\left(\mathrm{P}_{\frac{\alpha}{2} d_C^2}\right)=\frac{1}{2} \frac{\alpha}{1+\alpha}$.
\item\label{i:constantco} $c\left(\Id-P_C\right)=\ell\left(\Id-P_C\right)=1$.
\item\label{i:proxbiject} $\mathrm{P}_{\frac{\alpha}{2}d_C^2}$ is a bi-Lipschitz homeomorphism of $X$.
\end{enumerate}
\end{example}

\begin{proof}
\ref{i:dsquare}: By \cite[Example 6.65]{AB}, $\mathrm{P}_{\frac{\alpha}{2} d_C^2}=\frac{1}{1+\alpha} \Id+\frac{\alpha}{1+\alpha} P_C$. Thus we have $k\left(\mathrm{P}_{\frac{\alpha}{2} d_C^2}\right)=\frac{\alpha}{1+\alpha}k(P_C)=\frac{1}{2} \frac{\alpha}{1+\alpha}$ by Lemma \ref{multi} and Example \ref{projectionaverage}.

\ref{i:constantco}:
We have  $c\left(\mathrm{Id}-P_C\right)=1$ by Proposition \ref{p:coercive:const} and $k(P_C)=1/2$. On the other hand, since $\frac{1}{2} d_C^2 \in \Gamma_0(X)$ and $\nabla \frac{1}{2} d_C^2=\Id-P_C$,
see, e.g., \cite[Corollary 12.31]{BC}, we have  $c\left(\mathrm{Id}-P_C\right)=\ell\left(\mathrm{Id}-P_C\right)=1$ by Proposition \ref{clformula2}.

Consequently, Corollary \ref{Mainformula3} is verified by the results of \ref{i:dsquare} and \ref{i:constantco}:
$$
k\left(\mathrm{P}_{\frac{\alpha}{2} d_C^2}\right)=\frac{1}{2} \frac{\alpha \ell(\nabla \frac{1}{2} d_C^2)}{1+\alpha \ell(\nabla \frac{1}{2} d_C^2)}=\frac{1}{2} \frac{\alpha \ell(\mathrm{Id}-P_C)}{1+\alpha \ell(\mathrm{Id}-P_C)}=\frac{1}{2} \frac{\alpha}{1+\alpha}.
$$

\ref{i:proxbiject}: By \ref{i:dsquare}, $k\left(\mathrm{P}_{\frac{\alpha}{2} d_C^2}\right)=\frac{1}{2} \frac{\alpha}{1+\alpha}<\frac{1}{2}$, i.e., $\mathrm{P}_{\frac{\alpha}{2} d_C^2}$ is normally nonexpansive. The result follows by Theorem \ref{bijectivethe}.
\end{proof}

\section{Bauschke, Bendit \& Moursi's example generalized}\label{s:twod}
The following example on the modulus of averagedness of $P_{V}P_{U}$ extends \cite[Example 3.5]{BBM} in $\RR^2$
to a Hilbert space. Instead of using \cite[Theorem 3.2]{BBM}, we provide a much simpler proof.
\begin{example}\label{e:fried}
Let $\theta\in (0,\pi/2)$. In the product Hilbert space
$\HH=X\times X$, define
$$U=X\times \{0\},\quad V=\menge{(y,(\tan\theta) y)}{y\in X}.$$
Then
\begin{equation}\label{e:angle}
k(P_VP_{U})=\frac{1+\cos\theta}{2+\cos\theta}.
\end{equation}
\end{example}
\begin{proof}
We have
$$P_{U}=\begin{bmatrix}
\Id & 0\\
0 & 0
\end{bmatrix},
\text{ and }
P_{V}=\begin{bmatrix}
\frac{1}{1+\tan^2\theta}\Id & \frac{\tan\theta}{1+\tan^2\theta}\Id\\
\frac{\tan\theta}{1+\tan^2\theta}\Id & \frac{\tan^2\theta}{1+\tan^2\theta}\Id
\end{bmatrix}
$$
so that
$$P_VP_U=\begin{bmatrix}
\frac{1}{1+\tan^2\theta}\Id & 0\\
\frac{\tan\theta}{1+\tan^2\theta}\Id & 0
\end{bmatrix}.
$$
Put $T=P_VP_U$. Then $T$ is $k$-averaged  if and only if
\begin{equation}\label{e:linear:ave}
(\forall x\in\HH)\ \|Tx\|^2+(1-2k)\|x\|^2\leqslant 2(1-k)\scal{x}{Tx}.
\end{equation}
For $x=(x_{1}, x_{2})$ with $x_i\in X$,
we have
$$Tx=\bigg(\frac{1}{1+\tan^2\theta}x_1, \frac{\tan\theta}{1+\tan^2\theta} x_1\bigg),
\quad \scal{Tx}{x}=\frac{\|x_1\|^2}{1+\tan^2\theta}+\frac{\tan\theta}{1+\tan^2\theta}\scal{x_{1}}{x_{2}}.$$
Substitute above into \eqref{e:linear:ave} to obtain
$$\frac{\|x_{1}\|^2}{1+\tan^2\theta}+(1-2k)(\|x_{1}\|^2+\|x_{2}\|^2) \leqslant 2(1-k)\bigg(\frac{\|x_1\|^2}{1+\tan^2\theta}+\frac{\tan\theta}{1+\tan^2\theta}\scal{x_{1}}{x_{2}}\bigg),$$
which can be simplified to
\begin{align}\label{e:bigger}
(2k-1)\frac{-\tan^2\theta}{1+\tan^2\theta}\|x_{1}\|^2+(1-2k)\|x_{2}\|^2-2(1-k)\frac{\tan\theta}{1+\tan^2\theta}
\scal{x_{1}}{x_{2}} & \leqslant 0.
\end{align}
When $x_{2}=0$, this gives
$(2k-1)(-\tan^2\theta)\leqslant 0$, so $k\geqslant 1/2$. If $k=1/2$, this gives
$$(\forall x_{1},x_{2}\in X)\ -\frac{\tan\theta}{1+\tan^2\theta}\scal{x_{1}}{x_{2}}\leqslant 0,$$
which is impossible. Thus $k>1/2$. Dividing \eqref{e:bigger} by $\|x_{2}\|^2$ and applying
the Cauchy-Schwarz inequality, we have
\begin{equation}\label{e:quadratic}
(2k-1)\frac{\tan^2\theta}{1+\tan^2\theta}\bigg(\frac{\|x_{1}\|}{\|x_{2}\|}\bigg)^2
+2(1-k)\frac{\tan\theta}{1+\tan^2\theta}\bigg(\pm\frac{\|x_{1}\|}{\|x_{2}\|}\bigg)+(2k-1) \geqslant 0.
\end{equation}
Substituting $t=\|x_{1}\|/\|x_{2}\|$ into \eqref{e:quadratic} yields
$$(2k-1)\frac{\tan^2\theta}{1+\tan^2\theta}t^2
+2(1-k)\frac{\tan\theta}{1+\tan^2\theta}(\pm t)+(2k-1) \geqslant 0$$
which happens if and only if
$$\bigg(2(1-k)\frac{\tan\theta}{1+\tan^2\theta}\bigg)^2 \leqslant 4(2k-1)^2\frac{\tan^2\theta}{1+\tan^2\theta}$$
i.e., $(1-k)^2\leqslant (2k-1)^2(1+\tan^2\theta)$. Taking square root both sides, we have
$1-k\leqslant (2k-1)/\cos\theta$, so that $k\geqslant (1+\cos\theta)/(2+\cos\theta)$. Hence
$$k(T)=\frac{1+\cos\theta}{2+\cos\theta}.$$
\end{proof}
\begin{remark} Let
$U, V$ be two closed subspaces of
$\HH$. Recall that while the cosine of Dixmier angle
between $U, V$
is defined by
\begin{equation}
c_{D}(U,V)=\sup \menge{\scal{u}{v}}{u\in U, v\in V, \|u\|\leqslant 1, \|v\|\leqslant 1},
\end{equation}
the cosine of the Friedrich angle between $U, V$ is defined by
\begin{equation}
c_{F}(U,V)=\sup \menge{\scal{u}{v}}{u\in U\cap (U\cap V)^{\perp}, v\in V\cap (U\cap V)^{\perp}, \|u\|\leqslant 1, \|v\|\leqslant 1}.
\end{equation}
For more details on the angle between subspaces, see \cite{BBM, deutsch}.
With $U=X\times \{0\},\quad V=\menge{(y,(\tan\theta) y)}{y\in X}$ given in Example~\ref{e:fried},
for $\theta\in (0,\pi/2)$, we have $U\cap V=0$ so that $(U\cap V)^{\perp}=\HH=X\times X$. Then
\begin{align}
c_{D}(U,V)=c_{F}(U,V)
&=\menge{\scal{(x,0)}{(y,(\tan\theta) y)}}{x\in X, y\in X, \|x\|\leqslant 1, \|(y,(\tan\theta) y)\|\leqslant 1}\\
&= \menge{\scal{x}{y}}{x\in X, y\in X, \|x\|\leqslant 1, \|y\|\leqslant \cos\theta}
=\cos\theta.
\end{align}
Hence both the Dixmier and Friedrich angles between $U$ and $V$ are exactly $\theta$.
\end{remark}

\section*{Acknowledgments}
The authors thank the editor and an anonymous referee for careful reading and
constructive comments, especially on Example~\ref{lim:counterexample} and Remark~\ref{suggestion}. 
Inspiring discussions with Dr.\ H.H.\ Bauschke benefited the paper.
The research of the authors was partially supported by the Natural Sciences and Engineering Research Council of Canada.

\end{document}